\newcommand{\field}[1]{\mathbb{#1}}
\newcommand{\R}{\field{R}}
\newcommand{\p}{\field{P}}
\newcommand{\N}{\field{N}}
\newcommand{\E}{\field{E}}
\newcommand{\FF}{\mathcal{F}}
\newcommand{\bPhi}{\mbox{\boldmath$\Phi$}}
\newcommand{\epsilonv}{\mbox{\boldmath$\epsilon$}}
\def\argmin{\mathop{\mbox{argmin}}}
\theoremstyle{Conjecture} \theoremstyle{example}
\theoremstyle{remark} \theoremstyle{lemma}
\theoremstyle{definition} \theoremstyle{corol}
\theoremstyle{proposition} \theoremstyle{condition}
\newtheorem{theorem}{Theorem}[section]
\newtheorem{example}{Example}[section]
\newtheorem{lemma}{Lemma}[section]
\newtheorem{definition}{Definition}[section]
\newtheorem{proposition}{Proposition}[section]
\def\lf{\lfloor}
\def\rf{\rfloor}
\def\bX{{\bf X}}
\def\bS{{\bf S}}
\def\ba{{\bf a}}
\def\bA{{\bf A}}
\def\bN{{\bf N}}
\def\bT{{\bf T}}
\def\bg{{\bf g}}
\def\bh{{\bf h}}
\def\be{{\bf e}}
\def\bW{{\bf W}}
\begin{document}
\centerline{\large\bf Statistical Inference for High Dimensional Panel Functional Time Series}

	\centerline{\sc Zhou Zhou
		and Holger Dette}
		
	\centerline{\sc \small University of Toronto and Ruhr-Universit\"at Bochum  }

\bigskip
\font\n=cmcsc10

\centerline{\today}

\begin{abstract}
In this paper we develop statistical inference tools for high dimensional functional time series.
 We introduce  a new concept  of physical dependent processes  in the space of square integrable functions, which
adopts the idea of basis decomposition of functional data  in these spaces,  and derive Gaussian   
 and multiplier bootstrap approximations for sums of high dimensional
functional time series. These results have numerous  important  statistical consequences. Exemplarily, we consider
the
development of joint  simultaneous confidence bands  for the mean functions  and the construction of
tests for  the hypotheses that the mean  functions in the spatial dimension are  parallel. The results are illustrated by means of a small simulation study and in the analysis of
Canadian temperature data.

	\end{abstract}
	
	\bigskip
	
		AMS Subject Classification: G2M10, G2G10, G2G09
		
		Key words: {\it high dimensional functional time series, physical dependence, Gaussian approximation,
	  simultaneous confidence bands, hypotheses tests, spatio-temporal data}

\section{Introduction} \label{sec1}
\def\theequation{1.\arabic{equation}}
\setcounter{equation}{0}

In many fields of  statistics  data, say $X_{i,j}$,  are recorded in time  (denoted by the index $i$)   at different locations (denoted by the index $j$). This type of data
is called panel  or spatio-temporal data and  appears in numerous  applications. For example, in economics data are often collected   for different  firms  or  regions over several years, or in  geo-statistics
and  climate research  data  are recorded over a period of time at different locations.  Statistical methods for
this type of  data are meanwhile  well developed, in particular in the field of econometrics and spatial statistics,
and we refer to the monographs of  \cite{hsiao2003}, \cite{baltagi2005}, \cite{wooldridge2010}, \cite{cressie2015} and \cite{hainingli2020}
  among many others.  In recent years there has also been substantial interest
  in statistical inference tools for high dimensional time series   and we mention  exemplarily the  work of  \cite{cresjoha2008},
    \cite{GALVAO2010},  \cite{belloni}, \cite{banerjee2017}  and \cite{kock_tang_2019}
who developed methodology for various  high dimensional spatio-temporal  models.

A common feature of most of the literature on statistical methodology in  this context  consists in the fact that the data $X_{i,j}$ 
are  real valued  or  multivariate. However in  many modern applications
 more and more data are recorded  continuously  during  a  time  interval  or  intermittently  at  a very dense grid of   time  points.
In such  circumstances it is often reasonable to  use a high-dimensional functional time series model, where the  data  $X_{i,j}$  observed  at time $i$ in panel $j$ is a function.  Throughout this paper we call  this type of data functional panel data or high-dimensional functional time series.
For example,   $X_{i,j}$ could represent the (smoothed) curve of the  price of a stock for a day, where $i$ denotes the  index for the different days and $j$ is  the index for the different stocks. For another example, $X_{i,j}$ could represent the (smoothed) temperature or precipitation curve for a year $i$  at a location $j$.

 Although functional data  analysis  is nowadays  a rather  well developed and very  broad field [see for example the monographs of  \cite{bosq2000}, \cite{RamsaySilverman2005}, \cite{FerratyVieu2010}, \cite{HorvathKokoskza2012} and \cite{hsingeubank2015}],
there does not exist much literature  for panels of functional data, in particular for high  dimensional panels.
Several authors have developed statistical methodology for spatio-temporal data [see \cite{delicado2010}, \cite{gromkokoreim2017}, \cite{kokoreim2019} among others], but this literature usually does not consider the high-dimensional case, where the spatial dimension is increasing with the sample size.
High dimensional  functional  time series have  been investigated  in the context of forecasting and factor analysis [see   
\cite{GAO2019232} or \cite{nistavhal2019}].
Other authors considered statistical inference tools for  longitudinal functional data with a general hierarchical  structure,  where  independent  subjects or units are  observed at repeated times and at each time,
a functional observation (curve) is recorded [see for example, \cite{greven2010},
 \cite{park2015} or \cite{ChenDelicadoMuller2017}  among others].

The goal of  this paper is  to develop  statistical methodology for high-dimensional functional panel data,
which exhibits dependencies in  the time {\bf  and} spatial directions.  In Section \ref{sec2}  we introduce a new concept of
physical dependent processes  in the space of square integrable functions on the interval $[0,1]$.
 Our approach is similar   in spirit to the model of $m$-approximable functional time series
introduced by  \cite{hormkoko2010}, but our formulation further adopts the idea of basis decomposition of functional data and consequently separates the functional index and time index in the mathematical representation of the functional processes. We also refer to the work of  \cite{bosq2002}, \cite{bradley2007} or  \cite{panaretos2013}
 for other  concepts modelling functional dependent data such as  mixing conditions or  autocorrelations and cumulants.
 It is found in our theoretical investigations that it is simpler and more efficient to establish Gaussian approximation, chaining and bootstrap results for high dimensional functional time series using our physical representation. 
 In Section \ref{sec3}
 we provide some probabilistic results  which are  useful for the  statistical 
 analysis of high dimensional functional pane data. In particular we derive a Gaussian approximation for sums (with respect to time) of high dimensional panels, which is used  for the construction  of a multiplier bootstrap procedure
 to approximate  the distribution of sums uniformly with respect to the spatial   dimension.  These results have numerous applications in the statistical inference of high-dimensional functional time series
 and we  illustrate the potential  of our approach in Section \ref{sec4}. Here we derive joint simultaneous confidence bands for the mean functions of a high-dimensional functional time series
  and construct a test to check  the hypothesis of  parallelism of the  mean functions. Section \ref{sec5} is devoted to a small simulation study to illustrate
  the finite sample properties of the new methodology. We also present a data example analyzing yearly temperature curves from from different
   Canadian cities.  Finally,   all proofs are deferred to the online supplemental material of the paper.

\section{Physical representation of functional time series} \label{sec2}
\def\theequation{2.\arabic{equation}}
\setcounter{equation}{0}

In this section we introduce  the basic functional time series model considered in this paper.
To be precise, let  ${\cal L}^2[0,1]$ denote  the set of all real-valued square integrable functions defined on the interval $[0,1]$
and let $\{X_{i}\}_{i \in \mathbb{Z}}$  be   a stationary functional time series  in ${\cal L}^2[0,1]$.
 We begin with a motivating example for the general model defined below.

\begin{example}  \label{ex1}
{\rm  Let  ${\cal B}= \{B_j\}_{j=0}^\infty$  be a  basis of   ${\cal L}^2[0,1]$, then the  $i$-th element  $X_{i}$
of   a time series  $\{X_{i}\}_{i \in \mathbb{Z}}$   in ${\cal L}^2[0,1]$ can be represented   as
\begin{eqnarray}\label{eq:basis}
X_i(u)=\sum_{j=0}^\infty L_{i,j}B_j(u) ~~(i \in \mathbb{Z} )~,
\end{eqnarray}
where   $L_{i,j}=\int^b_a X_i(u)B_j(u)du$ is the $j$th Fourier coefficient of $X_{i}$ with respect to the basis ${\cal B}$.
If ${\bf L}_{i}=(L_{i,0},L_{i,1},\cdots)^\top$ denotes the infinite-dimensional vector of Fourier coefficients of $X_i$, one can use
any   dependence concept for  the infinite dimensional stationary time series $\{{\bf L}_i\}_{i\in{ \mathbb{Z}}}$
to describe  the dependence structure of the functional time series $\{X_{i}\}_{i \in \mathbb{Z}}$ in ${\cal L}^2[0,1]$.
Following   \cite{wu2005nonlinear}, a general model for the infinite dimensional stationary time series $\{{\bf L}_i\}_{i\in{ \mathbb{Z}}}$ is
given by
$$
{\bf L}_i={\bf G}(\FF_i), \quad i\in{ \mathbb{Z}},
$$
where ${\bf G}: {\cal S}^{\N} \to \R^{\N}$ is a given filter, $\FF_i=(\cdots, \eta_{i-1}, \eta_i)$  and $\{\eta_i\}_{i \in \mathbb{Z}}$ is a sequence  of independent and identically distributed
random variables taking values in some space ${\cal S}$. In this case  each element $X_i$  of the  time series can be written in the form $X_i(u) =H(u,\FF_i)$ for an appropriate function
$H$ mapping a sequence of Fourier coefficients to its corresponding function  in ${\cal L}^2[0,1]$.
}
\end{example}

Using  Example \ref{ex1}  as motivation,  we now introduce the following formulation of a stationary functional time series and the associated dependence concept, which will be considered in this paper.

\begin{definition}\label{def:concept}
A stationary functional time series $\{X_i(u)\}_{i\in\mathbb{Z}}$ in ${\cal L}^2[0,1]$ has a physical representation, if there exists a measurable function $H: [0,1]  \times {\cal S}^{\mathbb{N}} \to \R $ such that
\begin{eqnarray}\label{eq:model}
X_i(u) =H(u,\FF_i), \quad  \forall u\in[0,1],
\end{eqnarray}
where $\FF_i=(\cdots, \eta_{i-1}, \eta_i)$ and $\{\eta_i\}_{i \in \mathbb{Z}}$  is a sequence of independent identically
distributed random elements in ${\cal S}$.
\end{definition}

Note that  a functional  time series  $\{X_{i}\}_{i\in{\mathbb{Z}}} \subset {\cal L}^2[0,1]$  of the form (\ref{eq:model}) is strictly stationary by this definition.
In order to specify its dependence, we define for a real valued random variable  $Z$ and a constant $q \geq 1$ (in the case of existence)  its norm  $|| Z ||_{q} = \big ( \mathbb{E} [| Z|^{q}| \big )^{{1/q}}$ and introduce the following measure of physical dependence.

\begin{definition}\label{def:dependence}
Let $\{X_{i}\}_{i\in{\mathbb{Z}}}$   denote  a  functional time series  in ${\cal L}^2[0,1]$ of the form (\ref{eq:model}), such that $\| X_i(u) \|_q < \infty $. The dependence measures
of $\{X_{i}\}_{i\in{\mathbb{Z}}}$  are defined by  
\begin{eqnarray}\label{eq:dep}
\delta_H(k,q)=\sup_{u\in[0,1]}\|H(u,\FF_k)-H(u,\FF^*_k)\|_q, \quad k \ge 0,
\end{eqnarray}
where   $\FF^*_k=(\cdots, \eta_{-2}, \eta_{-1}, \eta^*_0,\eta_1,\cdots,\eta_{k-1}, \eta_k)$ and $\eta_0^*$ is identically distributed as $\eta_0$ and is independent of the sequence $\{\eta_i\}_{i\in{\mathbb Z}}$.
\end{definition}
In (\ref{eq:model}) the representation $H$ can be viewed as a filter or data generating function of a stochastic system and $\{\eta_i\}_{i\in\mathbb{Z}}$ can be viewed as random shocks or innovations of the system. Adopting the latter point of view, the physical dependence measures $\delta_H(k,q)$ quantify the influence of the innovations $k$ steps ahead on the current output of the system. Weak dependence is characterized by the fast decay of $\delta_H(k,q)$ as $k$ grows. Note that our definition of a functional time series and their dependence measures are  different from the  popular concept of ${\cal L}^{p}$-$m$-approximability  (see, for example,  \cite{hormkoko2010} and \cite{HorvathKokoskza2012}), where $X_i(u)$ is expressed as $G(\epsilon_i(u),\epsilon_{i-1}(u),\cdots)$ with $\{\epsilon_i(u)\}_{i\in\mathbb{Z}}$  i.i.d. random functions and the dependence is measured by the changes in $X_i(u)$  when certain $\epsilon_j(u)$ in the past are replaced with i.i.d. copies. Both formulations  are based 
 on Bernoulli shifts and coupling to quantify and control the dependence strength of functional time series. However,  the formulation considered in this paper further adopts the idea of basis decomposition of functional data and hence separates the functional index $u$ and time index $i$ in (\ref{eq:model}). It is found in our theoretical investigations that it is simpler and more efficient to establish Gaussian approximation and bootstrap results for high dimensional functional time series using the representation (\ref{eq:model}), conditions on the decay of the dependence measure (\ref{eq:dep}) and the associated basis decomposition.

We continue giving two examples on how to check the rate of decay  of $\delta_H(k,q)$ for linear and nonlinear functional time series models.

\begin{example}\label{ex2}
{\rm Consider the following MA($\infty$) functional linear model
\begin{eqnarray*} 
X_i(u)=\sum_{j=0}^\infty \int_{0}^1 a_j(u,v)\epsilon_{i-j}(v)\,dv,
\end{eqnarray*}
where $\{a_j \}_{j \geq 0}$ is a sequence of square integrable functions $a_j : [0,1]^2 \to \mathbb{R}$ satisfying $\sum_{j=0}^\infty\sup_{u,v\in[0,1]}| a_j(u,v)|<\infty$, and $\{\epsilon_{i}\}_{i\in\mathbb{Z}}$ is a sequence of i.i.d. random functions in ${\cal L}^2[0,1]$.
Writing $\epsilon_{i}(u)=\sum_{j=0}^\infty \eta_{i,j}B_j(u)$ with $\eta_i=(\eta_{i,j})^\top_{j\ge 0}$, we see that $X_i$ can be represented in the form of (\ref{eq:model}). Assume that, for some $q>1$, $\sup_{u\in[0,1]}\|\epsilon_{i}(u)\|_q<\infty$, then simple calculations using Definition \ref{eq:dep} show that the physical dependence measures in (\ref{eq:dep}) satisfy
$$\delta_H(k,q)=O\Big(\sup_{u\in[0,1]}\int_{0}^1|a_k(u,v)|\,dv\Big).$$}

\begin{example}\label{ex3}
{\rm In this example we continue our investigation of the nonlinear model introduced in Example \ref{ex1}. Recall the representation (\ref{eq:basis}) and let     $b_j=\sup_{u\in[0,1]}|B_j(u)|$ and $l_{j}=\E|L_{i,j}|$. The rate of decay   of $l_j$ is determined by the smoothness of the functions $X_i$ as well as the basis functions $B_j$ used in the decomposition (\ref{eq:basis}). See Proposition \ref{prop:1} below for the calculation of $l_j$ when $X_i(u)$ is twice continuously differentiable and $B_j$ is the cosine basis. Now write
$$X_i(u)=\sum_{j=0}^\infty l_j \tilde L_{i,j}B_j(u),$$
with $\tilde L_{i,j}= L_{ij}/l_j=\tilde G_{j}(\FF_i)$. Denote the physical dependence measure of $\{\tilde L_{i,j}\}_{i\in\mathbb{Z}}$ by (cf.  \cite{wu2005nonlinear}) $\theta_{j}(k,q):=\|\tilde G_{j}(\FF_k)-\tilde G_{j}(\FF^*_k)\|_q$. Then using (\ref{eq:dep}), we obtain that
\begin{eqnarray}\label{bound_ex3}
\delta_H(k,q)=O\Big( \sum_{j=0}^\infty l_jb_j\theta_j(k,q)\Big).
\end{eqnarray}
For a wide class of nonlinear time series models, \cite{wu2005nonlinear} contains detailed calculations of the  rate of decay  of their physical dependence measures. As $l_j$ and $b_j$ can be calculated as discussed above, the estimate (\ref{bound_ex3}) can be used to bound the dependence measures of a wide class of functional time series. A further simplification can be carried out if     $ \sum_{j=0}^\infty l_jb_j<\infty$. In this case we have $\delta_H(k,q)=O(\theta(k,q))$, where $\theta(k,q)=\sup_{j\ge 0}\theta_j(k,q)$.}
\end{example}

\end{example}

\medskip

Throughout this paper we will work with the basis $\{ \cos(k\pi u)\}_{k=0,1,\ldots}$ of ${\cal L}^{2}[0,1]$. If   $\{X_i \}_{i\in{\mathbb{Z} }}$ is a functional time series in   $ {\cal L}^2[0,1]$
 such that  (\ref{eq:model}) holds for some filter $H$, it   can be represented as a Fourier series
  $$
X_i(u)=\sum_{k=0}^\infty a_{i,k}\cos(k\pi u),
$$
where
\begin{equation}\label{fcoeff}
  a_{i,k} = 2 \int^1_0 \cos (k \pi u) X_i (u) du,
\end{equation}
$k\ge 1$ and $a_{i,0}=\int^1_0 X_i (u) du$ are random variables which can be written in the form
$$
a_{i,k}=G_k(\FF_i) = 2  \int^1_0  \cos (k \pi u)  H(u,\FF_i)du
$$
for some filter $G_k$. The following result specifies the rate of decay of the Fourier coefficients under smoothness conditions on the function $X_i$.
\begin{proposition}\label{prop:1}
Suppose that $\{X_i \}_{i \in \mathbb{Z}}$ is a time series in $\mathcal{L}^2[0,1]$ such that  (\ref{eq:model}) is satisfied and $\| X_i(u)\|_q < \infty$ for some $q \geq 2$. Assume $X_i$ is twice continuously differentiable on the interval [0,1] a.s. and that
 \begin{equation}\label{ass0}
  \|X_1(0)\|_q+\|X_1'(0)\|_q+\sup_{u\in[0,1]}\|X_1''(u)\|_q<\infty.
\end{equation}
  Then the Fourier coefficients in  (\ref{fcoeff}) satisfy
$$
\|a_{i,k}\|_q\le {C_q \over k^2}
$$
 for some constant $C_q$ which is independent of $i,k$. Furthermore,
 $$
 \|a_{i,k}-a^*_{i,k}\|_q=O(\min(\delta_H(i,q),1/k^2)),
 $$
  where $a^*_{i,k}=G_k(\FF_i^*) = \int^1_0 \cos(k\pi u)  H (u, \FF^*_i)du$.
 \end{proposition}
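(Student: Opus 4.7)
The plan is to establish the first bound by applying integration by parts twice, exploiting the fact that the cosine basis makes the first boundary term vanish, and then to derive the second bound as a direct consequence of the definition of the physical dependence measure.

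For the bound $\|a_{i,k}\|_q\le C_q/k^2$, I would start from
$$
a_{i,k} = 2\int_0^1 \cos(k\pi u)\, X_i(u)\, du
$$
and integrate by parts once, with antiderivative $\sin(k\pi u)/(k\pi)$. The boundary term vanishes because $\sin(0)=\sin(k\pi)=0$, leaving
$$
a_{i,k} = -\frac{2}{k\pi}\int_0^1 \sin(k\pi u)\, X_i'(u)\, du.
$$
A second integration by parts, using antiderivative $-\cos(k\pi u)/(k\pi)$, produces
$$
a_{i,k} = \frac{2}{k^2\pi^2}\bigl[(-1)^k X_i'(1) - X_i'(0)\bigr] - \frac{2}{k^2\pi^2}\int_0^1 \cos(k\pi u)\, X_i''(u)\, du.
$$
Taking $\|\cdot\|_q$, applying the triangle inequality and Minkowski's integral inequality gives
$$
\|a_{i,k}\|_q \le \frac{2}{k^2\pi^2}\bigl(\|X_i'(1)\|_q + \|X_i'(0)\|_q + \sup_{u\in[0,1]}\|X_i''(u)\|_q\bigr).
$$
To control $\|X_i'(1)\|_q$, which is not directly in condition (\ref{ass0}), I would use Newton--Leibniz, $X_i'(1) = X_i'(0) + \int_0^1 X_i''(u)\,du$, and Minkowski's integral inequality once more to bound it by $\|X_i'(0)\|_q + \sup_u \|X_i''(u)\|_q$. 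Stationarity then reduces everything to moments of $X_1$, so (\ref{ass0}) yields the constant $C_q$.

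For the second bound, the $1/k^2$ half is free: by the triangle inequality and stationarity of $\FF_i^*$ (which is an i.i.d. copy coupling),
$$
\|a_{i,k} - a_{i,k}^*\|_q \le \|a_{i,k}\|_q + \|a_{i,k}^*\|_q \le 2C_q/k^2.
$$
For the $\delta_H(i,q)$ half, I would write
$$
a_{i,k} - a_{i,k}^* = 2\int_0^1 \cos(k\pi u)\bigl(H(u,\FF_i) - H(u,\FF_i^*)\bigr)\,du,
$$
bring the $\|\cdot\|_q$ norm inside by Minkowski's integral inequality, and apply the definition (\ref{eq:dep}) to obtain $\|a_{i,k}-a_{i,k}^*\|_q \le 2\delta_H(i,q)$. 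Taking the minimum of the two bounds completes the proof.

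There is no serious obstacle here; the only mildly non-obvious step is recognizing that $\|X_i'(1)\|_q$ must be bounded via the fundamental theorem of calculus rather than being part of the hypothesis, and being careful that all applications of Minkowski's integral inequality are legitimate (requiring $q\ge 1$, which is satisfied since $q\ge 2$). Everything else is routine computation.
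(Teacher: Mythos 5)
Your proposal is correct and follows essentially the same route as the paper: two integrations by parts (the paper states the resulting identity directly), the bound $\|X_i'(1)\|_q\le\|X_i'(0)\|_q+\sup_u\|X_i''(u)\|_q$ via the fundamental theorem of calculus, the triangle inequality plus equidistribution of $a_{i,k}$ and $a^*_{i,k}$ for the $1/k^2$ part, and Minkowski's integral inequality with the definition of $\delta_H(i,q)$ for the other part. No gaps.
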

\noindent Proposition \ref{prop:1} establishes the rate of decay of $a_{i,k}$ as a function of $k$ as well as the dependence measure of $a_{i,k}$ when viewed as a time series indexed by $i$. In the following we will consider the standardized sum
\begin{equation*} 
 S_n(u)= \frac {1}{\sqrt{n}} \sum_{i=1}^nX_i(u)
\end{equation*}
  and its approximation
\begin{equation*} 
  S_{n}(k,u)=\frac {1}{\sqrt{n}} \sum_{i=1}^nX_i(k,u),
\end{equation*}
 where $X_i(k,u)= \sum_{j=1}^k a_{i,j}\cos(j\pi u)$ is the $k$th Fourier sum of $X_i(u)$.
\begin{proposition}\label{prop:2}
If the assumptions of Proposition \ref{prop:1} are satisfied,  $\E(X_i(u))=0$ and $\delta_H(i,q)=O(i^{-\beta})$ for some $\beta>2$, we have
$$\Big \|\sup_{u\in[0,1]}|S_n(u)-S_{n}(k,u)|\Big \|_q\le Ck^{(2-\beta)/\beta},$$
where $C$ is a finite constant which does not depend on $k$ or $n$.
\end{proposition}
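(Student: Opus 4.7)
The plan is to expand the tail $S_n(u)-S_n(k,u)$ as a cosine series, reduce the supremum to a sum of $L^q$-norms of the partial sums of the individual Fourier coefficients, and then control each coefficient's partial sum via a moment inequality for physical dependent sequences. Write
$$
S_n(u)-S_n(k,u)=\sum_{j=k+1}^{\infty}\cos(j\pi u)\,T_{n,j},\qquad T_{n,j}=\frac{1}{\sqrt{n}}\sum_{i=1}^{n}a_{i,j},
$$
which, via $|\cos(j\pi u)|\le 1$ together with Minkowski's inequality, gives
$$
\Big\|\sup_{u\in[0,1]}|S_n(u)-S_n(k,u)|\Big\|_q\;\le\;\sum_{j=k+1}^{\infty}\|T_{n,j}\|_q.
$$

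Next I would bound $\|T_{n,j}\|_q$ for each fixed $j$. Since $\mathbb{E}X_i(u)=0$ implies $\mathbb{E}a_{i,j}=0$, and the sequence $\{a_{i,j}\}_{i\in\mathbb{Z}}$ is stationary with physical representation $a_{i,j}=G_j(\mathcal{F}_i)$, I would invoke the standard Burkholder/Rosenthal-type moment inequality for stationary physical dependent sequences (Wu, 2005): for $q\ge 2$,
$$
\Big\|\sum_{i=1}^{n}a_{i,j}\Big\|_q\;\le\;K_q\sqrt{n}\,\sum_{i=0}^{\infty}\|a_{i,j}-a_{i,j}^{*}\|_q .
$$
Proposition \ref{prop:1} yields the key two-sided bound $\|a_{i,j}-a_{i,j}^{*}\|_q=O(\min(\delta_H(i,q),j^{-2}))$, so combining with $\delta_H(i,q)=O(i^{-\beta})$ gives
$$
\sum_{i=0}^{\infty}\|a_{i,j}-a_{i,j}^{*}\|_q\;\le\;C\sum_{i=0}^{\infty}\min(i^{-\beta},j^{-2}) .
$$

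The crux is now an elementary split of this sum at the crossover point $i_0\asymp j^{2/\beta}$ where $i^{-\beta}$ meets $j^{-2}$: the head $i\le i_0$ contributes $\asymp j^{2/\beta}\cdot j^{-2}=j^{(2-2\beta)/\beta}$, and the tail $i>i_0$ contributes $\asymp i_0^{1-\beta}=j^{(2-2\beta)/\beta}$ (using $\beta>2>1$). Hence
$$
\|T_{n,j}\|_q\;\le\;C j^{(2-2\beta)/\beta}.
$$
Summing over $j>k$ and using $\beta>2$ (so the exponent $(2-2\beta)/\beta<-1$), the tail sum is of the order
$$
\sum_{j=k+1}^{\infty}j^{(2-2\beta)/\beta}\;\le\;C k^{(2-\beta)/\beta},
$$
which is exactly the stated bound.

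The main obstacle, and really the only delicate step, is the two-regime estimate of $\sum_i\min(i^{-\beta},j^{-2})$: one needs both the ``smoothness'' bound $j^{-2}$ from Proposition \ref{prop:1} (to control small lags, where $\delta_H(i,q)$ is $O(1)$ and thus useless) and the ``dependence'' bound $\delta_H(i,q)=O(i^{-\beta})$ (to control large lags, where the smoothness bound alone is independent of $i$ and would diverge). Taking the minimum inside the sum is essential; taking it outside would fail, and this is the only place where the assumption $\beta>2$ is really used, both to make the crossover calculation interact nicely with the $j^{-2}$ smoothness factor and to ensure summability of the resulting series in $j$.
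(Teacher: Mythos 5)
Your proposal is correct and follows essentially the same route as the paper's proof: the same tail cosine expansion with the triangle inequality on $\|A_{n,j}\|_q$, the same physical-dependence moment inequality of Wu for the stationary coefficient sequence $\{a_{i,j}\}_i$, the same split of $\sum_i \min(i^{-\beta},j^{-2})$ at the crossover $i\asymp j^{2/\beta}$ giving the exponent $2(1-\beta)/\beta$, and the same final summation over $j>k$. No gaps to report.
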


\medskip
Proposition \ref{prop:2} establishes that $S_{n}(u)$ can be well approximated by $S_n(k,u)$ if $k$ is sufficiently large. The latter is an important result due to the fact that the theoretical investigation of $S_n(k,u)$ is much easier as it can be written as a linear combination of {\it finitely many} random variables.

We conclude this section with a result on {chaining of functional time series}. To be precise
define a grid ${\cal U}_n=\{t_{i,n}\}_{i=0}^{l_n}$ where $t_{i,n}=i/l_n$ and $l_n$ is a positive integer that diverges to infinity. The following proposition establishes that $\sup_{u\in[0,1]}|S_{n}(k,u)|$ can be well approximated by $\max_{u\in {\cal U}_n}|S_{n}(k,u)|$ for a sufficiently dense grid (that is   $l_n \to \infty$).
\begin{proposition}\label{prop:3}
If the assumptions of Proposition \ref{prop:2} are satisfied, we have
\begin{eqnarray*}
\Big \|\max_{0\le i \le l_n-1}\sup_{u\in [t_{i,n},t_{i+1,n}]}|S_{n}(k,u)-S_{n}(k,t_{i,n})| \Big \|_q\le C{ k^{2/\beta} \over l^{1-1/q}_n} ,
\end{eqnarray*}
where $C$ is a finite constant which does not depend on $k$ or $n$.
\end{proposition}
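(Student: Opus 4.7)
The plan is to exploit the fact that $S_n(k,\cdot)$ is a degree-$k$ trigonometric polynomial, which is smooth in $u$, and reduce the claim to a derivative bound on each sub-interval together with a trivial $\ell^q$ union bound over the $l_n$ cells of the grid.

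First I would swap sums and differentiate. Writing $\bar a_{n,j}=n^{-1/2}\sum_{i=1}^{n}a_{i,j}$,
\[
S_n(k,u)=\sum_{j=1}^{k}\bar a_{n,j}\cos(j\pi u),\qquad S_n'(k,u)=-\pi\sum_{j=1}^{k}j\,\bar a_{n,j}\sin(j\pi u).
\]
On the cell $[t_{i,n},t_{i+1,n}]$ (length $1/l_n$), the fundamental theorem of calculus and Minkowski's integral inequality give
\[
D_i:=\sup_{u\in[t_{i,n},t_{i+1,n}]}|S_n(k,u)-S_n(k,t_{i,n})|\le\int_{t_{i,n}}^{t_{i+1,n}}|S_n'(k,v)|\,dv,
\]
\[
\|D_i\|_q\le \int_{t_{i,n}}^{t_{i+1,n}}\|S_n'(k,v)\|_q\,dv.
\]

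Second, I would bound $\|S_n'(k,v)\|_q$ uniformly in $v$. Minkowski's inequality applied to the finite trigonometric sum, together with $|\sin|\le 1$, yields $\|S_n'(k,v)\|_q\le \pi\sum_{j=1}^{k}j\,\|\bar a_{n,j}\|_q$. The key estimate is therefore the rate of $\|\bar a_{n,j}\|_q$ in $j$, uniformly in $n$. Since $\mathbb{E}a_{i,j}=0$ and $\{a_{i,j}\}_{i}$ is a physical-dependence process, a standard partial-sum moment inequality (Wu, 2005) gives
\[
\|\bar a_{n,j}\|_q\le C_q\sum_{i=0}^{\infty}\|a_{i,j}-a_{i,j}^{*}\|_q .
\]
By Proposition \ref{prop:1} the summands are $O(\min(\delta_H(i,q),1/j^{2}))=O(\min(i^{-\beta},j^{-2}))$. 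Splitting the sum at $i_0\asymp j^{2/\beta}$ (where the two regimes balance) and using $\beta>2$ gives $\|\bar a_{n,j}\|_q=O(j^{2/\beta-2})$. Consequently
\[
\sum_{j=1}^{k} j\,\|\bar a_{n,j}\|_q = O\!\Big(\sum_{j=1}^{k}j^{2/\beta-1}\Big)=O(k^{2/\beta}),
\]
so that $\|S_n'(k,v)\|_q=O(k^{2/\beta})$ uniformly in $v$, and therefore $\|D_i\|_q=O(k^{2/\beta}/l_n)$ uniformly in $i$.

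Third, I would convert the pointwise bound on $\|D_i\|_q$ into a bound on $\|\max_i D_i\|_q$ via the crude inequality
\[
\Big\|\max_{0\le i\le l_n-1} D_i\Big\|_q \le \Big(\sum_{i=0}^{l_n-1}\|D_i\|_q^{q}\Big)^{1/q}\le l_n^{1/q}\max_{i}\|D_i\|_q = O\!\big(k^{2/\beta}/l_n^{1-1/q}\big),
\]
which is exactly the asserted estimate.

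The main technical obstacle is the bound on $\|\bar a_{n,j}\|_q$: it is the only place where real probability enters, and getting the right exponent $2/\beta$ in $k$ requires combining the two-regime estimate $O(\min(i^{-\beta},j^{-2}))$ from Proposition \ref{prop:1} with the $\sqrt{n}$-moment inequality for physical-dependence sums. The remaining steps are calculus (derivative of a trigonometric polynomial, FTC, Minkowski) and the elementary $\ell^q$ bound $\max_i|\cdot|^q\le \sum_i|\cdot|^q$, which is precisely what produces the factor $l_n^{1/q}$ in the denominator exponent.
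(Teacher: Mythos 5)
Your proposal is correct and follows essentially the same route as the paper's proof: the FTC/Minkowski reduction to $\|S_n'(k,v)\|_q$, the bound $\|A_{n,j}\|_q=O(j^{2/\beta-2})$ via Wu's physical-dependence moment inequality combined with the two-regime estimate from Proposition \ref{prop:1}, the resulting $O(k^{2/\beta})$ derivative bound, and the crude $\ell^q$ maximum inequality producing the factor $l_n^{1/q}$. The only cosmetic difference is that you re-derive the bound on $\|A_{n,j}\|_q$, whereas the paper simply refers back to the calculation in the proof of Proposition \ref{prop:2}.
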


\section{Main results} \label{sec3}
\label{sec3}
\def\theequation{3.\arabic{equation}}
\setcounter{equation}{0}

In this section we derive  a Gaussian approximation for partial sums of  high dimensional functional time series which can be  used  to define a multiplier bootstrap  procedure.
Throughout this paper  $\{\bX_i\}_{i=1}^n$ denotes   an $r$-dimensional stationary functional times with  $\bX_i = (X_{i,1} ,  \ldots , X_{i,r})^\top $, where each component
$X_{i,j}$   of the vector
 $\bX_i $  is an element of ${\cal L}^2 [0,1]$ satisfying  (\ref{eq:model}) for some filter $H_{j}$, that is $X_{i,j}(u)=H_j(u, \FF_i)$  ($j=1,2,\cdots, r$). Consequently, the
$r$-dimensional vector can be represented as
\begin{equation}\label{hvec}
 \bX_i(u)={\bf H}(u,\FF_i)  
\end{equation}
$(i=1 ,  \ldots ,n,$ $u \in [0,1] ),$  where the $r$-dimensional filter is defined by ${\bf H} = (H_{1}, \ldots , H_{r} )^{\top}$. We assume
 that   the dimension
   \begin{equation}\label{rn}
 r\asymp n^{\theta_1}
 \end{equation}
  increases at a polynomial rate  with the sample size $n$, where  $\theta_1>0$  is a given constant. We are interested in the probabilistic properties of
  the  sums of high-dimensional vectors
\begin{equation}\label{snvec}
   \bS_n(u)=\frac {1}{\sqrt{n}} \sum_{i=1}^n\bX_i(u)
\end{equation}
as $n,r \to \infty$, and denote by
  \begin{equation}\label{snvecj}
     S_{n,j}(u)=\frac {1}{\sqrt{n}} \sum_{i=1}^nX_{i,j}(u)~~~~~~(j=1,\ldots , r)
  \end{equation}
  the $j$-th component of the vector  $\bS_n = (S_{n,1}, \ldots, S_{n,r})^T$.

\subsection{Gaussian approximation} \label{sec32}

Consider  the component-wise Fourier expansion of the $r$-dimensional function $\bX_i $
$$
\bX_i(u)=\sum_{j=0}^\infty \ba_{i,j}\cos(j\pi u)
$$
and  define the vector
 \begin{equation} \label{avec}
 \bA_{n,j} = ( A_{n,j,1}, \ldots A_{n,j,r})^{\top}
 = \frac{1}{\sqrt{n}}\sum_{i=1}^n \ba_{i,j}.
\end{equation}
   For $n \in \mathbb{N}$ let $\{\bN_{n,k}\}_{k=0}^\infty$ be a sequence of independent  centered $r$-dimensional Gaussian random variables that preserves 
   the covariance structure of $\{\bA_{n,k}\}_{k=0}^\infty$,    i.e. Cov$(\bA_{n,k})=$Cov$(\bN_{n,k})$, define
\begin{eqnarray}\label{eq:norm}
\bS^N_n(u)= \big (S^N_{n,1}(u), \ldots, S^N_{n,r}(u) \big )^{\top} =  \sum_{k=0}^\infty\bN_{n,k}\cos(k\pi u) 
\end{eqnarray}
and denote by $S^N_{n,j}(u)$  the $j$-th entry of the vector $\bS^N_n(u) = (S^N_{n,1}(u), \ldots , S^N_{n,r}(u))^{\top}$. By the proof of Proposition \ref{prop:2}, we have  for $l$-th coordinates  $A_{n,j,l}$ of
the vectors  $\bA_{n,j}$ in  (\ref{avec})
$$
\sum_{j=0}^\infty \|A_{n,j,l}\|_q<\infty ~,
$$  
and therefore  the random variable $\bS^N_n(u)$ in (\ref{eq:norm}) is well defined (almost surely).

\begin{theorem}\label{thm:1}
Let $\{ \bX_i \}_{i \in \mathbb{Z}}$ denote a stationary $r$-dimensional time series satisfying (\ref{hvec}).
For each $j$, suppose that $X_{i,j}$ is twice continuously differentiable on the interval $[0,1]$ a.s.;
  satisfies $\|X_{ij}\|_q < \infty$ and (\ref{ass0})  for some $q\ge 4$. Assume $\E (\bX_i(u))=0$ and
\begin{equation}\label{ass3a}
 \max_{1\le j\le r}\delta_{H_j}(i,q)=O(i^{-\beta})
\end{equation}
for some $\beta>3$. If there exists a positive constant $\delta$ such that
\begin{equation}\label{ass3}
  \min_{1\le j\le r}\inf_{u\in[0,1]}\E[S^2_{n,j}(u)]\ge \delta
\end{equation}
 for sufficiently large $n$ and the exponent $\theta_1$ in (\ref{rn}) satisfies
 \begin{equation}\label{ass4}
 \theta_1<f(\beta, q):={q-2 \over  2[1+\beta/[(q-1)(\beta-2)]] },
 \end{equation}
    we have as $n,r \to \infty$
\begin{eqnarray}\label{eq:gauss_approx}
\sup_{x\in \R}\Big | \p \Big [\max_{1\le j\le r}\sup_{0\le u\le 1}|S_{n,j}(u)|\le x \Big ]-\p  \Big [\max_{1\le j\le r}\sup_{0\le u\le 1}|S^N_{n,j}(u)|\le x  \Big  ]\Big |\rightarrow 0.
\end{eqnarray}

\end{theorem}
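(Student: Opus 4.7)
The plan is to reduce the supremum problem to a high-dimensional finite problem via two discretizations, apply a Gaussian approximation for maxima of sums of weakly dependent vectors, and use anti-concentration to absorb the approximation errors. Concretely, I would introduce a truncation level $k_n$ and a grid $\mathcal{U}_n=\{t_{i,n}\}$ with $l_n$ points, and then approximate
\[
\max_{1\le j\le r}\sup_{u\in[0,1]}|S_{n,j}(u)| \approx \max_{1\le j\le r}\max_{t\in \mathcal{U}_n}|S_{n,j}(k_n,t)|,
\]
and similarly on the Gaussian side. The right-hand side is the maximum of $r\cdot l_n$ coordinates of a finite linear combination of the sums $\bA_{n,0},\ldots,\bA_{n,k_n}$ of stationary weakly dependent $r$-vectors, and is therefore exactly the kind of quantity handled by a Chernozhukov--Chetverikov--Kato type Gaussian approximation for high-dimensional dependent sums (e.g.\ via the physical-dependence framework of Zhang--Wu).

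First I would quantify the two deterministic errors. Applying Proposition \ref{prop:2} componentwise together with a union bound over $j=1,\ldots,r$ gives, for any $q\ge 4$,
\[
\Big\|\max_{1\le j\le r}\sup_{u}|S_{n,j}(u)-S_{n,j}(k_n,u)|\Big\|_q \le C r^{1/q} k_n^{(2-\beta)/\beta},
\]
and Proposition \ref{prop:3} together with a union bound yields
\[
\Big\|\max_{1\le j\le r}\max_{0\le i\le l_n-1}\sup_{u\in[t_{i,n},t_{i+1,n}]}|S_{n,j}(k_n,u)-S_{n,j}(k_n,t_{i,n})|\Big\|_q\le C r^{1/q}\frac{k_n^{2/\beta}}{l_n^{1-1/q}}.
\]
Both error bounds apply verbatim to the Gaussian process $\bS_n^N$ (whose Fourier coefficients have the same variance structure and whose dependence measure vanishes). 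I would choose $k_n$ and $l_n$ as polynomial powers of $n$ (with $l_n$ large enough to absorb Markov-type deviations with an extra logarithmic factor), so that, after using Markov's inequality with a suitable tolerance $\varepsilon_n\to 0$, both errors are $o_{\p}(1/\sqrt{\log n})$ uniformly.

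Next, I would apply a high-dimensional Gaussian approximation for maxima of sums of stationary $m$-dependent (after an $m$-approximation) vectors of dimension $D_n:=r\cdot l_n\cdot(k_n+1)$, evaluated by the $r\cdot l_n$ linear functionals obtained by stacking $(j,t)\mapsto \sum_{k=0}^{k_n}\cos(k\pi t)A_{n,k,j}$. Under $\|X_{i,j}\|_q<\infty$, $\delta_{H_j}(i,q)=O(i^{-\beta})$, and the variance lower bound (\ref{ass3}) on the linearized variance (transferred from $\inf_u\E S_{n,j}^2(u)\ge \delta$ via the basis representation and a uniform orthogonality argument), the CCK/Zhang--Wu bound gives a Kolmogorov distance that is $o(1)$ as long as $\log D_n$ grows slower than some explicit power of $n$ determined by $\beta$ and $q$. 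Carefully tracking the exponents, one sees that the admissible growth for $r$ is exactly $r\asymp n^{\theta_1}$ with $\theta_1<f(\beta,q)$, since the most binding constraint is of the form $n^{\theta_1}\cdot(\text{polylog factors})\cdot n^{-(q-2)/(2+\cdots)}\to 0$, and rearrangement gives the formula (\ref{ass4}).

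Finally, I would combine the pieces using Nazarov's anti-concentration inequality for the Gaussian maximum: since, under (\ref{ass3}), $\max_j\sup_u\mathrm{Var}(S^N_{n,j}(u))$ is bounded away from $0$, a perturbation of order $\varepsilon_n\to 0$ in the random variable changes the distribution function of $\max_{j,u}|S^N_{n,j}(u)|$ by at most $O(\varepsilon_n\sqrt{\log(rl_n)})$, which is $o(1)$ by our choice of $k_n,l_n$. The same comparison shifts $\max_{j,u}|S_{n,j}(u)|$ to its discretized version on the empirical side. The main obstacle is exactly this rate balancing: the truncation error shrinks only as a negative power of $k_n$ (depending on $\beta$), while the CCK bound degrades as a positive power of $k_n$ (through the effective dimension and through the physical dependence coefficient $\theta_j(k,q)$ that enters after basis expansion), so the admissible exponent $\theta_1$ is obtained by optimizing $k_n$ and $l_n$ simultaneously; carrying out this optimization cleanly under the physical-dependence framework is the delicate part, and yields precisely the threshold $f(\beta,q)$ in (\ref{ass4}).
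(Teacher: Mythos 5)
Your proposal follows essentially the same route as the paper's proof: truncation via Proposition \ref{prop:2}, chaining to a grid via Proposition \ref{prop:3} (each with an $L^q$ maximal inequality giving the $r^{1/q}$ factor), a Zhang--Wu/Chernozhukov--Chetverikov--Kato Gaussian approximation applied to the $r\,l_n$-dimensional discretized, truncated sums, Nazarov's anti-concentration to absorb the Markov tolerances on both the empirical and Gaussian sides, and polynomial choices of $k_n,l_n$ whose exponent bookkeeping yields condition (\ref{ass4}). The only details you gloss over, which the paper makes explicit, are the halving of the dependence exponent for the truncated process (Lemma \ref{lem:2}) and the Gaussian comparison step matching the finite-$n$ covariance of $\bS^N_n$ to the long-run covariance produced by the Zhang--Wu theorem, but these fit naturally into your outline.
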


\medskip

It is easy to show that, under assumptions of Theorem \ref{thm:1},  the weak convergence
$$
S_{n,j}(u)\Rightarrow \mathcal{N}(0,\sigma^2(j,u))
$$
holds for any $j=1, \ldots , r$ and $u \in [0,1] $, where the symbol $\mathcal{N}(\mu, \sigma^2)$ denotes a normal distribution with mean $\mu$ and variance $\sigma^2$.
Consequently, assumption (\ref{ass3}) is rather  mild and means that, uniformly in $u$ and $j$, $S_{n,j}(u)$ will not converge to a degenerate limit.

\subsection{Bootstrapping panel functional time series}
Theorem \ref{thm:1} provides an approximation (in distribution) of $\max_{1 \leq j \leq r} \sup_{u \in [0,1]} | S_{n,j}(u) |$ by a corresponding expression using normally distributed random variables. The latter distribution can be easily simulated if the dependence structure of the process $\{\bN_{n,k} \}^\infty_{k=0}$ would be known. In order to mimic the dependencies, we propose  a multiplier bootstrap method.
To be precise, define for a block size $m$  the local ($r$-dimensional) mean \begin{equation}\label{Tu}
\bT_{i,m}(u)= \frac{1}{m}\sum_{j=i}^{i+m} \bX_j(u)
\end{equation}
and consider the vector
\begin{eqnarray}\label{phim}
\bPhi_{m}(u)=\sqrt{\frac{m}{n-m}}\sum_{i=1}^{n-m}[\bT_{i,m}(u)-\bS_n(u)/\sqrt{n}]N_i,
\end{eqnarray}
where $N_i$ are i.i.d. one-dimensional standard normal random variables. Let $\Phi_{m,j}(u)$ be the $j$-th entry of $\bPhi_{m}(u)$, then we have the following result.

\begin{theorem}\label{thm:2}
Let $\{ \bX_i \}_{i \in \mathbb{Z}}$ denote a stationary $r$-dimensional time series satisfying (\ref{hvec}).
For each $j$, suppose that $X_{i,j}$ is twice continuously differentiable on the interval $[0,1]$ a.s.
  satisfying $\|X_{ij}\|_q < \infty$ and (\ref{ass0})  for some $q>2$ and assume that
   \begin{eqnarray}\label{eq:2lip}
 \|X_{i,j}''(u)-X''_{i,j}(v)\|_q\le C|u-v|
 \end{eqnarray}
 holds for all $u,v \in [0,1]$  for sufficiently large $n$.
 Further assume that and   (\ref{ass3})  holds and that
\begin{equation}\label{ass6}
  \max_{1\le j\le r}[\delta_{H_j}(i,q)+\delta_{H'_j}(i,q)]=O(i^{-\beta})
\end{equation}
 for some $\beta>2$ and $q>4$, where $H'_j(u,\FF_i)$ is the filter corresponding to the derivative $X_{i,j}'(u)$ of the process of $X_{i,j}(u)=H_j(u,\FF_i)$.
 If the block size $m$ satisfies $m \asymp n^\phi$ with $0<\phi<1$ and the exponent $\theta_1$ in (\ref{rn}) satisfies $\theta_1<\phi'q^2/[2(q+1)]$ where $\phi'=-\max\{(\phi-1)/2,-\phi\}$, then on a sequence of events $E_n$ such that $\p(E_n)\rightarrow 1$ if $r,n \to \infty$, we have
\begin{eqnarray*}
\sup_{x\in \R}\Big |\p \Big [\max_{1\le j\le r}\sup_{0\le u\le 1} |\Phi_{m,j}(u)|\le x~ \Big |\{\bX_i\}_{i=1}^n  \Big ]-\p  \Big [\max_{1\le j\le r}\sup_{0\le u\le 1}|S^N_{n,j}(u)|\le x  \Big ]\Big |\rightarrow 0.
\end{eqnarray*}

\end{theorem}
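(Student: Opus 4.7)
\textbf{Proof plan for Theorem \ref{thm:2}.} My strategy is to reduce the uniform–in–$u$ and uniform–in–$j$ comparison to a high–dimensional but finite comparison of two centered Gaussian vectors, and then apply a covariance–based Gaussian comparison argument. To this end I would: (i) truncate the Fourier series of $\bPhi_m$ at a suitable level $k=k_n$; (ii) chain $\sup_{u\in[0,1]}$ by $\max_{u\in{\cal U}_n}$ over a grid of mesh $1/l_n$; and (iii) show that, on a high–probability event $E_n$, the conditional covariance of the truncated bootstrap process approximates the covariance of the Gaussian analog $\bS^N_n$ uniformly over all coordinates $j,l\le r$ and grid points $u,v\in{\cal U}_n$. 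The last step, combined with Theorem \ref{thm:1} and a Gaussian comparison lemma of Chernozhukov–Chetverikov–Kato (Nazarov–type) for maxima in growing dimension, will deliver the claim.

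For step (i), write $\bPhi_m(u)=\sum_{k=0}^\infty \bPhi_{m,k}\cos(k\pi u)$ where $\bPhi_{m,k}=\sqrt{m/(n-m)}\sum_{i=1}^{n-m}[\bar{\ba}_{i,m,k}-\bA_{n,k}/\sqrt n]N_i$ and $\bar{\ba}_{i,m,k}$ denotes the block average of the $k$-th Fourier coefficients. Since conditionally on the data $\bPhi_{m,k}$ is Gaussian, control of the conditional $L^2$ tail reduces to controlling $\sum_{k>k_n}\|a_{i,j,k}\|_q$, which by Proposition \ref{prop:1} decays like $k^{-2}$. A truncation argument parallel to Proposition \ref{prop:2} (using (\ref{eq:2lip}) for the derivative bound) then shows that the tail of $\sup_u|\bPhi_m(u)-\bPhi_m^{(k_n)}(u)|$ is negligible on $E_n$. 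Step (ii) is an analog of Proposition \ref{prop:3}: the increment bound $\|\Phi_{m,j}(u)-\Phi_{m,j}(v)\|_q\lesssim|u-v|\cdot(\text{bounded})$ follows from the smoothness of $X_{i,j}$ and the $O(k^{2/\beta})$ moment bound for Fourier weights, so a grid of size $l_n$ growing polynomially gives uniform negligibility of the chaining error.

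The heart of the proof is step (iii). Using the block structure of $\bT_{i,m}$, direct computation gives
\[
\E^*[\Phi_{m,j}(u)\Phi_{m,l}(v)]=\frac{m}{n-m}\sum_{i=1}^{n-m}\bigl[T_{i,m,j}(u)-S_{n,j}(u)/\sqrt{n}\bigr]\bigl[T_{i,m,l}(v)-S_{n,l}(v)/\sqrt{n}\bigr],
\]
which is a block–based long–run variance estimator. Under (\ref{ass6}) and $m\asymp n^\phi$, the bias $m\cdot\E T_{i,m,j}(u)T_{i,m,l}(v)-\Gamma_{j,l}(u,v)$ with $\Gamma_{j,l}(u,v)=\sum_{h\in\Z}\mathrm{Cov}(X_{0,j}(u),X_{h,l}(v))$ is $O(m^{-(\beta-1)})$ by truncating the covariance series, while the stochastic fluctuation is controlled via a maximal inequality for partial sums of weakly dependent high–dimensional functional processes built on Proposition \ref{prop:1} and the physical dependence bound in (\ref{ass6}). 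Balancing these two errors across $r\asymp n^{\theta_1}$ coordinates, $l_n$ grid points and the truncation index $k_n$ yields the precise requirement $\theta_1<\phi' q^2/[2(q+1)]$ with $\phi'=-\max\{(\phi-1)/2,-\phi\}$ stated in the theorem.

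The main obstacle is this uniform covariance approximation: we need $\max_{j,l\le r}\max_{u,v\in{\cal U}_n}|\E^*[\Phi_{m,j}(u)\Phi_{m,l}(v)]-\mathrm{Cov}(S^N_{n,j}(u),S^N_{n,l}(v))|\to 0$ on $E_n$, despite $r\cdot l_n$ growing polynomially in $n$. This requires a sharp Nagaev/Rosenthal–type inequality for the block averages together with the polynomial moment $q>4$ and the decay rate $\beta>2$ for $\delta_{H_j}$ and $\delta_{H'_j}$; the derivative bound enters precisely to control the chaining contribution uniformly over the $r$ coordinates. Once this uniform covariance approximation is established, a standard Gaussian comparison inequality applied conditionally on $E_n$, combined with Theorem \ref{thm:1}, yields the bootstrap consistency.
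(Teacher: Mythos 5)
Your plan follows essentially the same route as the paper's proof: discretize on a polynomially growing grid, control the off-grid remainder through the conditional Gaussianity of $\bPhi_{m}$ and the derivative process $\Phi'_{m,j}$ (this is exactly where (\ref{eq:2lip}) and $\delta_{H'_j}$ enter), establish on a high-probability data event the uniform consistency of the conditional covariance of the block-multiplier process toward $\mathrm{Cov}(\bS^N_n)$ over all coordinates and grid points, and conclude with the Chernozhukov--Chetverikov--Kato Gaussian comparison together with Nazarov's anti-concentration inequality, just as in Lemmas \ref{lem:3}--\ref{lem:4} and the paper's proof of Theorem \ref{thm:2}. Two minor corrections: the bias of the block long-run covariance estimator is of order $1/m$ rather than $O(m^{-(\beta-1)})$ (it is the combination $\sqrt{m/n}+1/m$ that produces $\phi'=\min\{(1-\phi)/2,\phi\}$), and Theorem \ref{thm:1} is not actually needed here, since the comparison is made directly between the conditional law of $\Phi_{m,j}$ and the law of $S^N_{n,j}$.
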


\section{Some statistical applications}
\label{sec4}
\def\theequation{4.\arabic{equation}}
\setcounter{equation}{0}

The results of Section \ref{sec3} can be used for statistical inference of high-dimensional time series,  if the 
statistical analysis is based on a vector of sums of the form (\ref{snvecj}). 
Typical applications are tests for parametric assumptions on the mean function, two (or more) sample comparisons, the construction of confidence regions for parameters of interest, change point analysis, to name just a few.
Exemplarily, we illustrate in this section two applications, namely the construction of joint simultaneous confidence bands for the mean functions and the development of a test that the mean functions at different spatial locations are parallel.

\subsection{Joint  simultaneous confidence bands (JSCB)}\label{sec:jscb}
let $\mathcal{C}^2[0,1]$ denote the space of twice continuously differentiable functions on the interval $[0,1]$, let
$\{\bX_i\}_{i=1}^n$ be an $r$-dimensional stationary functional time series in $(\mathcal{C}^2[0,1])^{r}$ satisfying (\ref{hvec}) and denote  by $\bg(u):=\E(\bX_i(u))=(g_1(u),\ldots,g_r(u))^\top$ the expectation of $\bX_i$. This subsection is devoted to the construction of joint (asymptotic) simultaneous confidence bands (JSCB) for the components of the vector $\bg$ if the sample size $n$ and dimension $r$ converge to infinity. More precisely, we aim to find (random) functions
$c_1,d_1,\ldots,c_r,d_r$ defined on the interval $[0,1]$, such that $c_j(u) \leq d_j(u)$  for all $u \in [0,1]$ and such that the subset
\begin{equation}\label{creg}
  \mathcal{C}_{n,\alpha} = \Big \{ \bh\in (\mathcal{C}^2[0,1])^r \mid c_j(u) \leq h_j(u) \leq d_j(u) ~\forall u \in [0,1] ~\forall j=1,\ldots,r \Big \}.
\end{equation}
of the set of  $r$-dimensional functions $ \bh=  (h_1,\ldots,h_r)^\top $  on the interval $[0,1]$  satisfies
\begin{eqnarray*}
\lim_{n,r\rightarrow \infty}\p( \bg \in   \mathcal{C}_{n,\alpha}  )=1-\alpha
\end{eqnarray*}
for some pre-specified constant $\alpha \in (0,1)$.
To this end, we denote by
\begin{eqnarray*}
v_j(u)=\sqrt{\mbox{Var}(X_{i,j}(u))}, \quad j=1,2,\cdots, r,
\end{eqnarray*}
the variance of  $X_{i,j}(u)$ and make the following assumption.
\begin{itemize}
\item[(V)]   There exists a   positive constant $\delta  >0 $ such that
 $$
 \inf_{u\in[0,1]}\min_{1\le j\le r}v_j(u)\ge \delta
 $$

\end{itemize}

\begin{proposition}\label{prop:scb}

Let $\{ \bX_i \}_{i \in \mathbb{Z}}$ denote a stationary $r$-dimensional time series satisfying (\ref{hvec}).
For each $j$, suppose that $X_{i,j}$ is twice continuously differentiable on the interval $[0,1]$ a.s.
  satisfies $\|X_{ij}\|_q < \infty$ and (\ref{ass0})  for some $q>2$.  Assume that (\ref{ass3a}),  (\ref{ass3}), (\ref{ass4}) and condition (V) hold. Let $\check{X}_{i,j}(u)=(X_{i,j}(u)-g_j(u))$ and define \begin{equation}\label{h7}
\check{S}_{n,j}(u)= \frac {1}{\sqrt{n}}\sum_{i=1}^n \check{X}_{i,j}(u)  \qquad j=1,2,\cdots, r.
\end{equation}
Then, as $r,n \to \infty$, we have
\begin{eqnarray*} 
\sup_{x\in \R}\Big | \p \Big [\max_{1\le j\le r}\sup_{0\le u\le 1}|\check S_{n,j}(u)/v_j(u)|\le x \Big ]-\p  \Big [\max_{1\le j\le r}\sup_{0\le u\le 1}|S^N_{n,j}(u)/v_j(u)|\le x  \Big  ]\Big |\rightarrow 0,
\end{eqnarray*}
where $S^N_{n,j}(u)$ is the $j$th  component of the vector $\bS^N_n = (S^N_{n,1}(u), \ldots, S^N_{n,r}(u))^{\top}$   defined in (\ref{eq:norm}).
\end{proposition}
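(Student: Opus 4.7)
The plan is to follow the proof of Theorem \ref{thm:1} while carrying the deterministic factor $1/v_j(u)$ through each step. First, center by replacing $X_{i,j}$ with $\check X_{i,j}=X_{i,j}-g_j$; this leaves the dependence measures in (\ref{eq:dep}), the sample-path smoothness, and the covariances defining $\bS^N_n$ in (\ref{eq:norm}) unchanged, while yielding $\E(\check X_{i,j}(u))=0$ as required by Theorem \ref{thm:1}.

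Next, assumption (\ref{ass0}) together with Fubini / dominated convergence shows that $v_j^2(u)=\E[\check X_{i,j}^2(u)]$ is twice continuously differentiable on $[0,1]$ with derivatives bounded uniformly in $j\le r$, and condition (V) gives $v_j(u)\ge\delta$ uniformly. Consequently $1/v_j\in\mathcal{C}^2[0,1]$, with $1/v_j$ and its derivatives bounded uniformly in $j$; this lets every $j$-uniform bound in the proof of Theorem \ref{thm:1} survive multiplication by $1/v_j(u)$ at the cost of a universal constant.

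With these preparations in hand, I would repeat the three-stage reduction underlying the proof of Theorem \ref{thm:1} for the modified target $\max_j\sup_u|\check S_{n,j}(u)/v_j(u)|$. The Fourier truncation of Proposition \ref{prop:2} applied coordinatewise, combined with $\sup_{u,j}1/v_j(u)\le\delta^{-1}$, gives that replacing $\check S_{n,j}(u)$ by its $k$-term cosine approximation $\check S_{n,j}(k,u)$ produces an $L^q$ error of order $k^{(2-\beta)/\beta}$ uniformly in $j$. The chaining argument of Proposition \ref{prop:3} passes from $\sup_{u\in[0,1]}$ to $\max_{u\in\mathcal{U}_n}$: the product rule for $\check S_{n,j}(k,u)\cdot (1/v_j(u))$ produces a main modulus term controlled by Proposition \ref{prop:3} and a residual term of order $(\|\max_{j,i}|\check S_{n,j}(k,t_{i,n})|\|_q)/l_n$ from the Lipschitz factor $1/v_j$, both negligible for $l_n$ growing fast enough under the hypothesis (\ref{ass4}). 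The finite-dimensional Gaussian approximation for the vector $(A_{n,l,j})_{0\le l\le k,\,1\le j\le r}$ at the grid points $\mathcal{U}_n$ is unchanged from the proof of Theorem \ref{thm:1}, since the factors $1/v_j(u)$ are deterministic constants at each grid point, so the approximating Gaussian vector on the grid is simply multiplied coordinatewise by $1/v_j(t_{i,n})$. Reversing truncation and chaining on the Gaussian side yields $\max_j\sup_u|S^N_{n,j}(u)/v_j(u)|$ as the approximating object, and Nazarov's anti-concentration inequality promotes the pointwise-in-$x$ approximation to uniformity in $x$.

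The main obstacle is the $j$-uniform smoothness and lower bound on $v_j(u)$; once these are secured via (\ref{ass0}) and (V), the remainder is a cosmetic modification of the argument for Theorem \ref{thm:1}, since the rescaling by $1/v_j(u)$ is deterministic, smooth, and bounded below by $\delta$ uniformly in $j\le r$, so each quantitative estimate absorbed by the proof of Theorem \ref{thm:1} carries over with only an extra universal constant.
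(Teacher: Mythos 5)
Your proposal is correct and takes essentially the same route as the paper: the paper's proof simply notes that $\check X_{i,j}$ is centered, shows via the dominated convergence theorem under (\ref{ass0}) that $g_j$ and hence $v_j$ are twice continuously differentiable (with (V) supplying the lower bound), and then invokes Theorem \ref{thm:1} for the deterministically rescaled process. Your re-tracing of the truncation--chaining--Gaussian-approximation steps with the factor $1/v_j(u)$ carried along is just a more explicit version of that same argument.
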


Proposition \ref{prop:scb} implies that if one can find a critical value $c^n_{1-\alpha}$ such that
$$  \lim_{r,n \to \infty} \p \Big[\max_{1\le j\le r}\sup_{0\le u\le 1}|S^N_{n,j}(u)/v_j(u)|\le c^n_{1-\alpha}\Big ] = 1-\alpha,$$
then a $100(1-\alpha)$$\%$  JSCB for the vector $\bg = (g_1,\ldots,g_r)^\top$   is obtained by  (\ref{creg})
using the functions
\begin{eqnarray}
\label{cj}
  c_j(u) &=& \frac{S_{n,j}(u) - c^n_{1- \alpha} v_j(u)}{\sqrt{n}}, \qquad j=1,\ldots,r, \\
  d_j(u) &=& \frac{S_{n,j}(u) + c^n_{1- \alpha}  v_j(u)}{\sqrt{n}}, \qquad j=1,\ldots,r,
  \label{dj}
\end{eqnarray}
where $S_{n,j}(u)$ is defined in (\ref{snvecj}).  As the standard deviation   $v_{j}$ and the quantile $c^n_{1- \alpha}$ are not known,
we have to estimate these from the data.  In particular $v_{j}^{2}$ is estimated by
the sample variance  of $\{X_{i,j}(u)\}_{i=1}^n$ defined as
\begin{eqnarray*}
\hat{v}^2_j(u)= \frac {1}{n}\sum_{i=1}^n\big(X_{i,j}(u)-S_{n,j}(u)/\sqrt{n}\big)^2.
\end{eqnarray*}
The following lemma establishes that $\hat{v}^2_j$ is a uniformly consistent estimator.
\begin{lemma}\label{lem:hatv}
For each $j$, suppose $X_{i,j}$ is twice continuously differentiable on the interval [0,1] a.s.
and satisfies (\ref{ass0})  for some $q>2$.  Assume that (\ref{ass3a}) holds and that $\theta_1<q/4$. Then we have $r,n \to \infty$
\begin{eqnarray*}
\Big \|\max_{1\le j\le r}\sup_{0\le u\le 1}|\hat{v}^2_j(u)-v^2_j(u)| \Big \|_{q/2}=O(r^{2/q}/\sqrt{n})=o(1).
\end{eqnarray*}
\end{lemma}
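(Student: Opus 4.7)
The plan is to start from the algebraic decomposition
\begin{equation*}
\hat v_j^2(u) - v_j^2(u) \;=\; A_j(u) \;-\; B_j(u),
\end{equation*}
where $A_j(u) = n^{-1}\sum_{i=1}^n\big(X_{i,j}(u)^2 - \mathbb{E} X_{i,j}(u)^2\big)$ and $B_j(u) = \bar X_j(u)^2 - g_j(u)^2$ with $\bar X_j(u)=n^{-1}\sum_{i=1}^n X_{i,j}(u)$. I bound $A_j$ and $B_j$ separately in the $L^{q/2}$-norm, uniformly in $(j,u)$, and combine them via the triangle inequality.

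For $B_j$, factor $B_j(u)=(\bar X_j(u)-g_j(u))(\bar X_j(u)+g_j(u))$ and apply H\"older's inequality in $L^{q/2}$,
\begin{equation*}
\Big\|\max_j\sup_u|B_j(u)|\Big\|_{q/2}\le \Big\|\max_j\sup_u|\bar X_j-g_j|\Big\|_q\cdot \Big\|\max_j\sup_u|\bar X_j+g_j|\Big\|_q.
\end{equation*}
The first factor equals $n^{-1/2}\|\max_j\sup_u|\check S_{n,j}(u)|\|_q$. To bound it I truncate the Fourier series at some level $k$: by Proposition \ref{prop:2} the tail $\check S_{n,j}-\check S_{n,j}(k,\cdot)$ is negligible, and $\sup_u|\check S_{n,j}(k,u)|\le\sum_{l=0}^k|\check A_{n,j,l}|$ where $\check A_{n,j,l}=n^{-1/2}\sum_i(a_{i,j,l}-\mathbb{E} a_{i,j,l})$. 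Combining Proposition \ref{prop:1} with a Rosenthal-type inequality for partial sums of physically-dependent sequences (Wu, 2005) gives $\|\check A_{n,j,l}\|_q=O\big(\sum_{i\ge 0}\min(i^{-\beta},l^{-2})\big)=O(l^{2/\beta-2})$, which is summable in $l$ since $\beta>2$. A union bound over $j$ then yields $\|\max_j\sup_u|\check S_{n,j}(u)|\|_q=O(r^{1/q})$. Since $g_j$ is uniformly bounded under (\ref{ass0}), the second factor is $O(1)$ for large $n$, so $\|\max_j\sup_u|B_j(u)|\|_{q/2}=O(r^{1/q}/\sqrt n)$.

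For $A_j$, write $A_j(u)=n^{-1/2}\tilde S_{n,j}(u)$ with $\tilde S_{n,j}(u)=n^{-1/2}\sum_i Y_{i,j}(u)$ and $Y_{i,j}(u)=X_{i,j}(u)^2-\mathbb{E} X_{i,j}(u)^2$. The key observation is that $Y_{i,j}$ is itself a stationary physically-dependent functional process for which the same scheme applies at moment level $q/2$: from $X^2-(X^*)^2=(X-X^*)(X+X^*)$ and H\"older, the dependence measure of its filter satisfies $\delta(i,q/2)=O(\delta_{H_j}(i,q))=O(i^{-\beta})$; and $(X^2)''=2XX''+2(X')^2$ lies in $L^{q/2}$ uniformly in $u$ by H\"older applied to factors in $L^q$, where $\sup_u\|X_{i,j}'(u)\|_q<\infty$ follows by integrating $X_{i,j}''$ from $X_{i,j}'(0)$ and invoking (\ref{ass0}). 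Re-running the truncation-plus-union-bound argument for $\tilde S_{n,j}$ in $L^{q/2}$ then yields $\|\max_j\sup_u|\tilde S_{n,j}(u)|\|_{q/2}=O(r^{2/q})$, whence $\|\max_j\sup_u|A_j(u)|\|_{q/2}=O(r^{2/q}/\sqrt n)$.

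Combining the two bounds gives the claimed $O(r^{2/q}/\sqrt n)$ rate, and under $\theta_1<q/4$ with $r\asymp n^{\theta_1}$ this equals $O(n^{2\theta_1/q-1/2})=o(1)$. The main technical obstacle is re-running the Fourier-truncation and moment bounds of Propositions \ref{prop:1}--\ref{prop:3} at level $q/2$ for the squared process $Y_{i,j}$: the bookkeeping needed to verify that $Y_{i,j}$ inherits (\ref{ass0}) and the polynomial decay of the physical dependence measure is what forces the repeated use of H\"older's inequality and the twice-continuous differentiability of $X_{i,j}$.
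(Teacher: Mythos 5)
Your proposal is correct and follows essentially the same route as the paper's proof: decompose $\hat v_j^2-v_j^2$ into a centered sum of squared (physically dependent) functional observations plus a lower-order centering term, inherit the $O(i^{-\beta})$ dependence decay and the smoothness/moment condition (\ref{ass0}) at level $q/2$ for the squared process via H\"older and $(X^2)''=2XX''+2(X')^2$, run the Fourier-truncation bounds of Propositions \ref{prop:1}--\ref{prop:2}, and finish with $L^{q/2}$ and $L^q$ maximal inequalities over $j$ to get the $r^{2/q}/\sqrt{n}$ rate. Your use of uncentered squares with $B_j=\bar X_j^2-g_j^2$ instead of the paper's centered $\check X_{i,j}^2$ and $\check S_{n,j}^2/n$ is only a cosmetic variant of the same argument.
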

In order to obtain the critical value $c^n_{1-\alpha}$, we now utilize the multiplier bootstrap procedure established in Theorem \ref{thm:2}. Detailed steps for the implementation are listed as follows.
\begin{itemize}
\item[(a)] Select block size $m$ such that $m \to \infty$, $m=o(n)$. 
\item[(b)] For a large integer $B$ generate independent standard normal distributed   random variables $\{N_i^j\}_{i=1}^{n-m}$, $j=1,2,\cdots, B$. For each $j$, calculate
\begin{eqnarray*}
\bPhi^j_{m}(u)=\sqrt{\frac{m}{n-m}}\sum_{i=1}^{n-m}[\bT_{i,m}(u)-\bS_n(u)/\sqrt{n}]N_i^j
\end{eqnarray*}
and 
$$
\tilde{\Phi}^j_{m}=\max_{1\le k\le r}\sup_{0\le u\le 1}[|\Phi^j_{m,k}(u)|/\hat v_j(u)],
$$
 where $\Phi^j_{m,k}(u)$ is the $k$-th component of the vector $\bPhi^j_{m}(u) = (\Phi^j_{m,1}(u) , \ldots , \Phi^j_{m,r}(u) )^{\top}$.
\item[(c)] Let $\tilde{\Phi}^{(1)}_{m}\le \cdots\le \tilde{\Phi}^{(B)}_{m}$ be the ordered statistics of $\{\tilde{\Phi}^{j}_{m}\}_{j=1}^B$. Then the quantile $c^n_{1-\alpha}$ is estimated by   $\tilde{\Phi}^{(\lf(1-\alpha)B\rf)}_m$.
\item[(d)]
The JSCB is then defined by (\ref{creg}) using the functions  in (\ref{cj}) and (\ref{dj})  with these estimates, that is
\begin{equation} \label{hd21}
\begin{split}
  c_j(u)= \frac{S_{n,j}(u) - \tilde{\Phi}^{(\lf(1-\alpha)B\rf)}_m \hat v_j(u)}{\sqrt{n}}, \\
  d_j(u) = \frac{S_{n,j}(u) + \tilde{\Phi}^{(\lf(1-\alpha)B\rf)}_m  \hat v_j(u)}{\sqrt{n}}.
\end{split}
   \qquad j=1,\ldots,r,
\end{equation}
\end{itemize}

A data driven rule for the choice of the block length in step (a) will be given in Section \ref{sec:tps}. The following proposition establishes that these definitions  yield an asymptotically correct JSCB.
\begin{proposition}\label{boots_jscb}
Under the conditions of Theorem \ref{thm:2} and Proposition \ref{prop:scb}, we have for the JSCB defined in (\ref{creg}) with the functions $c_j$ and $d_j$ defined in (\ref{hd21})
$$
\lim_{n,r \to \infty}  \lim_{B \to \infty}\mathbb{P} (\bg \in \mathcal{C}_{n,\alpha}) = 1 - \alpha.
$$
\end{proposition}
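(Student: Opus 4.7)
The plan is to prove $\p(\bg\in\mathcal{C}_{n,\alpha})\to 1-\alpha$ by reducing the coverage event to a scalar bound and then transferring both sides to a common Gaussian pivot. Substituting the expressions in (\ref{hd21}) into the definition (\ref{creg}) and using (\ref{h7}), the coverage event is equivalent to $\{T_n\le\tilde c_{n,B}\}$, where
\begin{equation*}
T_n:=\max_{1\le j\le r}\sup_{u\in[0,1]}\frac{|\check S_{n,j}(u)|}{\hat v_j(u)},\qquad \tilde c_{n,B}:=\tilde\Phi_m^{(\lfloor(1-\alpha)B\rfloor)}.
\end{equation*}
Let $W_n:=\max_{j,u}|S^N_{n,j}(u)|/v_j(u)$ denote the target Gaussian pivot and let $c^n_{1-\alpha}$ be its true $(1-\alpha)$-quantile. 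It suffices to prove (a) $\sup_x|\p(T_n\le x)-\p(W_n\le x)|\to 0$ and (b) $\tilde c_{n,B}-c^n_{1-\alpha}=o_p(1)$ as first $B\to\infty$ and then $n,r\to\infty$, since the Nazarov / Chernozhukov--Chetverikov--Kato anti-concentration inequality $\p(W_n\in[x,x+\delta])\le C\delta\sqrt{\log r}$ then glues (a) and (b) into $\p(T_n\le\tilde c_{n,B})\to 1-\alpha$.

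For step (a), Proposition \ref{prop:scb} supplies the Gaussian approximation already, but with the infeasible normalizer $v_j$ in place of $\hat v_j$ inside $T_n$. To swap $v_j$ for $\hat v_j$, Lemma \ref{lem:hatv} together with condition (V) gives $\varepsilon_n:=\max_{j,u}|\hat v_j(u)/v_j(u)-1|=O_p(r^{2/q}/\sqrt n)$, while Gaussian concentration applied to the approximating process yields $\max_{j,u}|\check S_{n,j}|/v_j=O_p(\sqrt{\log r})$; hence
\begin{equation*}
\Bigl|T_n-\max_{j,u}\frac{|\check S_{n,j}(u)|}{v_j(u)}\Bigr|\le \varepsilon_n\cdot\max_{j,u}\frac{|\check S_{n,j}(u)|}{v_j(u)}=o_p(1)
\end{equation*}
under the dimension restriction $\theta_1<f(\beta,q)$ inherited from Proposition \ref{prop:scb}, and the anti-concentration bound absorbs this $o_p(1)$ shift in the CDF to finish (a).

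For step (b), I apply Theorem \ref{thm:2} not to $\bX_i$ but to the rescaled panel $\tilde\bX_i(u):=(X_{i,j}(u)/v_j(u))_{j=1}^r$. Since $v_j$ is bounded below by $\delta>0$ under (V) and inherits the required smoothness and moment bounds from the twice-differentiability of $X_{i,j}$ (through $v_j^2(u)=\E X_{i,j}^2(u)-g_j^2(u)$), all hypotheses (\ref{ass3a})--(\ref{ass6}) transfer to $\tilde\bX_i$. The multiplier bootstrap object for $\tilde\bX_i$ is precisely $\Phi_{m,j}(u)/v_j(u)$, so Theorem \ref{thm:2} delivers $\sup_x|\p(\max_{j,u}|\Phi_{m,j}|/v_j\le x\mid\{\bX_i\}_{i=1}^n)-\p(W_n\le x)|\to 0$ on events of probability tending to one. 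Repeating the $v_j\to\hat v_j$ swap for the bootstrap statistic (same Lemma \ref{lem:hatv} bound, same anti-concentration argument) gives the analogous conditional Gaussian approximation for $T_n^*:=\max_{j,u}|\Phi_{m,j}|/\hat v_j$, whose conditional quantiles drive $\tilde c_{n,B}$. Letting $B\to\infty$ first, a conditional Glivenko--Cantelli argument pushes the empirical quantile $\tilde c_{n,B}$ to the conditional $(1-\alpha)$-quantile of $T_n^*$ almost surely, which by the preceding display is within $o_p(1)$ of $c^n_{1-\alpha}$.

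The principal obstacle will be the $v_j\to\hat v_j$ replacement inside a maximum indexed by a $(j,u)$-set of effective cardinality growing polynomially in $n$: the relative error $\varepsilon_n$ gets multiplied by a Gaussian maximum of order $\sqrt{\log r}$, so one must verify that the resulting perturbation is small enough to be swallowed by the $\sqrt{\log r}$ anti-concentration factor, which is exactly where the rate in Lemma \ref{lem:hatv} and the condition $\theta_1<f(\beta,q)$ interlock. A secondary point to check is that $v_j$ is smooth enough for the rescaled process to satisfy (\ref{eq:2lip}); this uses the assumed Lipschitz second derivative of $X_{i,j}$ together with a dominated-convergence differentiation under the expectation $\E X_{i,j}^2(u)$.
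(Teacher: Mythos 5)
Your proposal is correct and takes essentially the same route as the paper, which gives no detailed argument at all: it simply states that Proposition \ref{boots_jscb} ``follows directly from Theorem \ref{thm:2}, Proposition \ref{prop:scb} and Lemma \ref{lem:hatv}'' and omits the details. Your write-up supplies exactly those omitted details --- rewriting the coverage event as $\{T_n\le\tilde c_{n,B}\}$, swapping $\hat v_j$ for $v_j$ via Lemma \ref{lem:hatv}, condition (V) and Nazarov-type anti-concentration, and obtaining bootstrap quantile consistency by applying Theorem \ref{thm:2} to the $v_j$-rescaled panel --- so it is a faithful elaboration of the intended proof rather than a different one.
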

Proposition \ref{boots_jscb} follows directly from Theorem \ref{thm:2}, Proposition \ref{prop:scb} and Lemma \ref{lem:hatv}. The details are omitted for the sake of brevity. The finite coverage probabilities of these joint simultaneous confidence bands are investigated in Section \ref{sec51} by means of a simulation study.

\subsection{Test of parallelism}\label{sec:tp}
Suppose we observe a panel of functional time series $\{\bX_i \}_{i=1}^n$ of dimension $r$   satisfying (\ref{hvec}).
Adopting the same notation as in Section \ref{sec:jscb}, for $j=1,2,\cdots, r$, let $g_j(u)=\E[X_{i,j}(u)]$ be the mean function of panel $j$. Denote by $g^o_j=\int_{0}^1g_j(u)\,du$ the average of $g_j(u)$ with respect to $u$, $j=1,2,\cdots,r$. In this subsection, we are interested in testing the null hypothesis that the mean functions in the different panels are parallel, that is
\begin{equation}\label{h0}
{H}_0 : g_1(\cdot)\parallel g_2(\cdot)\parallel\cdots\parallel g_r(\cdot),
\end{equation}
versus the alternative that at least two mean functions are not parallel. Here the symbol $\parallel$ denotes parallelism of two functions; that is, $g_i \parallel g_j $ if and only if the difference $g_i(u)-g_j(u)$ is a constant function.

Now let
$$
\hat{g}_j(u)=\frac{1}{n} \sum_{i=1}^nX_{i,j}(u)  \mbox{ and }  \hat{g}^o_j=\int_{0}^1\hat{g}_j(u)\,du
$$
 be the corresponding sample estimates of $g_j(u)$ and $g_j^o$, respectively, and define  
 $$
 \nu_j(u)=g_j(u)-g^o_j ~;~~~ \hat \nu_j(u)=\hat{g}_j(u)-\hat g^o_j.
 $$
  To test the hypothesis  (\ref{h0}) we    consider the following  statistic

\begin{eqnarray}\label{tstat}
T_n=\sqrt{n}\max_{1\le j<k\le r}\sup_{0\le u\le 1}|\hat \nu_j(u)- \hat \nu_k(u)|/v_{j,k}(u),
\end{eqnarray}
where $v_{j,k}(u)>0$ is a measure of scale for $\hat \nu_j(u)- \hat \nu_k(u)$ which will be defined below. Observe that the null hypothesis in (\ref{h0}) is satisfied if and only if $ \nu_j(u)=\nu_k(u)$ for all $u\in[0,1]$ and all pairs $k$ and $j$, $1\le j< k\le r$. As a result, the test statistic $T_n$ should be large if there exists at least one pair of non-parallel functions. Furthermore, $T_n$ can be viewed as a family-wise-error-controlled pair-wise test of parallelism among the curves $g_1,\ldots,g_r$ and one can use the test $T_n$ to decide which curves are parallel to each other and which ones are not with a given family-wise error   (asymptotically). Consequently, the statistic $T_n$ can be used to cluster curves with similar shapes together.

In order to obtain the critical values for a test, which rejects the null hypothesis (\ref{h0}) for large values of $T_n$, we define the quantities
\begin{eqnarray*}
W_{i,j}(u)&=&X_{i,j}(u)-\int_{0}^1X_{i,j}(u)\,du \\
W_{i,j,k}(u)&=&W_{i,j}(u)-W_{i,k}(u)
\end{eqnarray*}
and
\begin{equation}\label{h9}
 \mathring S_{n,j,k}(u)=\frac{1}{\sqrt{n}}\sum_{i=1}^nW_{i,j,k}(u), \quad  1\le j < k\le r.
\end{equation}
With these notations it is easy to see that the statistic $T_n$ in (\ref{tstat}) can be represented as
\begin{equation}\label{h10}
T_n=\max_{1\le j<k\le r}\sup_{0\le u\le 1}|\mathring S_{n,j,k}(u)|/v_{j,k}(u).
\end{equation}
From this representation we observe immediately that Theorems \ref{thm:1} and \ref{thm:2} can be used to determine its critical values. Furthermore, it is easy to see from the above representation that a natural choice for the scaling factors is
$$v_{j,k}(u)=\sqrt{\mbox{Var}[W_{i,j,k}(u)]},$$
where in practice, one replaces $v_{j,k}(u)$ with its sample version, that is
$$
\hat v_{j,k}(u)=\Big\{\frac{1}{n}\sum_{i=1}^n\Big[W_{i,j,k}(u)-\frac{1}{n}\sum_{s=1}^nW_{s,j,k}(u)\Big]^2\Big\}^{1/2}.
$$

Let   $\mathring\bW_i(u)$ be the vector of dimension $r(r-1)/2$  with entries $W_{i,j,k}(u)$, $1\le j < k\le r$, consider its (coordinate-wise) Fourier expansion
\begin{equation*} 
\mathring\bW_i(u) =\sum_{j=0}^\infty \mathring\ba_{i,j}\cos(j\pi u)
\end{equation*}
and define the $r(r-1)/2$-dimensional vector
$$
 \mathring \bA_{n,j}=\frac{1}{\sqrt{n}}\sum_{i=1}^n \mathring \ba_{i,j} ~.
 $$
For $n \in \mathbb{N}$ let $\{\bN_{n,k}\}_{k=0}^\infty$ be a sequence of independent 
centered $r(r-1)/2$-dimensional Gaussian vectors that preserves the covariance structure of $\{\mathring\bA_{n,k}\}_{k=0}^\infty$, i.e. 
$$\mbox{Cov}
(   \{\mathring \bA_{n,k}\}_{k=0}^\infty)=\mbox{Cov}(\{\bN_{n,k}\}_{k=0}^\infty),$$
 and define the
vector
\begin{eqnarray*} \label{eq:norm_tp}
\mathring\bS^N_n(u)=
\sum_{l=0}^\infty\mathring\bN_{n,l}\cos(l\pi u) = \big ( \mathring S^N_{n,j,k}(u)\big)_{1\le j < k\le r},
\end{eqnarray*}
where the random variables  $\mathring S^N_{n,j,k}(u)$ denote the entries of $\mathring \bS^N_n(u)$. By the proof of Proposition \ref{prop:2}, we have $\sum_{k=0}^\infty \|\mathring A_{n,k,a,b}\|_q<\infty$ where
 the random variables   $\mathring A_{n,k,a,b}$
denote the entries  of the vector   $\mathring \bA_{n,k}$  ($1 \leq a < b \leq r$). Hence the random variable $\mathring\bS^N_n(u)$ is well defined almost surely.

\begin{proposition}\label{prop:tp}
For each $j$, suppose that $X_{i,j}$ is twice continuously differentiable on the interval [0,1] a.s.
and satisfies (\ref{ass0})  for some $q>2$.  Assume that (\ref{ass3a}) holds; that (\ref{ass3}) holds with $S^2_{n,j}(u)$ therein replaced by $\mathring S^2_{n,j,k}$; that (\ref{ass4}) holds with $\theta_1$ therein replaced by $2\theta_1$; and that condition (V) hold with $v_j$ therein replaced by $v_{j,k}$, $1\le j<k\le r$. If $n,r \to \infty$, we have, under the null hypothesis (\ref{h0}) of parallelism
\begin{eqnarray*} 
\sup_{x\in \R}\Big | \p \Big [\max_{1\le j<k\le r}\sup_{0\le u\le 1}\Big|\frac{\mathring S_{n,j,k}(u)}{v_{j,k}(u)}\Big|\le x \Big ]-\p  \Big [\max_{1\le j<k\le r}\sup_{0\le u\le 1}\Big|\frac{\mathring S^N_{n,j,k}(u)}{v_{j,k}(u)}\Big|\le x  \Big  ]\Big |\rightarrow 0.
\end{eqnarray*}
\end{proposition}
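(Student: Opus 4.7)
The plan is to deduce Proposition \ref{prop:tp} directly from Theorem \ref{thm:1}, applied not to $\{\bX_i\}$ but to the auxiliary $r(r-1)/2$-dimensional functional time series $\{\mathring\bW_i\}_{i=1}^n$ whose coordinates are the pairwise differenced-and-centered processes $W_{i,j,k}$, $1\le j<k\le r$. The pair index $(j,k)$ thus plays the role of the single coordinate index of Theorem \ref{thm:1}, and since the effective dimension is $r(r-1)/2\asymp r^2\asymp n^{2\theta_1}$, the dimensional constraint (\ref{ass4}) has to be tightened to hold with $2\theta_1$ in place of $\theta_1$ --- which is exactly the hypothesis stated.

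First I would exhibit a physical representation for $\mathring\bW_i$. Setting $\tilde H_j(u,\FF_i):=H_j(u,\FF_i)-\int_0^1 H_j(v,\FF_i)\,dv$, we have $W_{i,j}(u)=\tilde H_j(u,\FF_i)$, so that $W_{i,j,k}(u)=\tilde H_j(u,\FF_i)-\tilde H_k(u,\FF_i)$ is a measurable functional of $\FF_i$. Under the null hypothesis (\ref{h0}), $\E[W_{i,j,k}(u)]=\nu_j(u)-\nu_k(u)\equiv 0$, so $\mathring S_{n,j,k}(u)$ is already a centered partial sum and the Gaussian approximation framework of Section \ref{sec3} applies directly, with no additional recentering needed.

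Next I would verify, coordinate by coordinate and \emph{uniformly in the pair} $(j,k)$, that the hypotheses of Theorem \ref{thm:1} transfer from $X_{i,j}$ to $W_{i,j,k}$. Twice continuous differentiability in $u$ is preserved under subtraction of a $u$-independent integral and under differences; since differentiation annihilates the integral term, condition (\ref{ass0}) transfers via the triangle inequality with only a multiplicative constant, e.g.\ $\|W''_{i,j,k}(u)\|_q\le\|X''_{i,j}(u)\|_q+\|X''_{i,k}(u)\|_q$. For the dependence measure, Jensen's inequality applied to the integral term gives $\delta_{\tilde H_j}(i,q)\le 2\delta_{H_j}(i,q)$, so the dependence measure associated with the $(j,k)$-th coordinate of $\mathring\bW_i$ is bounded by $2[\delta_{H_j}(i,q)+\delta_{H_k}(i,q)]=O(i^{-\beta})$ uniformly in $(j,k)$. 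The lower bounds on $\E[\mathring S^2_{n,j,k}(u)]$ and on $v_{j,k}(u)$ are explicitly assumed.

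Finally, I would invoke Theorem \ref{thm:1} in the normalized form that underlies Proposition \ref{prop:scb} (coordinate-wise division by a deterministic scale bounded away from zero) applied to the $r(r-1)/2$-dimensional vector $\mathring\bW_i$ with normalizer $v_{j,k}(u)$. The strengthened exponent condition $2\theta_1<f(\beta,q)$ ensures the dimensional constraint of Theorem \ref{thm:1} holds for this enlarged vector, delivering the claimed approximation. The main technical obstacle is purely organizational: one has to relabel the pair $(j,k)$ as a single coordinate in $\{1,\ldots,r(r-1)/2\}$ and check that every uniform-in-$j$ bound used inside the proof of Theorem \ref{thm:1}---chaining across $u$ (Proposition \ref{prop:3}), Fourier truncation (Proposition \ref{prop:2}), and the finite-dimensional Gaussian approximation---remains uniform in the relabeled coordinate after taking pairs. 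Because all of the verifications above are uniform in $(j,k)$, no new phenomenon arises in this step.
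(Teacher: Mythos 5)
Your proposal matches the paper's own proof: the paper likewise treats $\mathring\bW_i$ as an $r(r-1)/2$-dimensional functional time series, checks (uniformly in the pair) that the centering integral only doubles the dependence measure, $\delta_{W_{j,k}}(h,q)\le 4\delta_H(h,q)=O(h^{-\beta})$, and that the Fourier-coefficient bounds of Proposition \ref{prop:1} carry over, and then invokes the arguments of Theorem \ref{thm:1} and Proposition \ref{prop:scb} on this enlarged vector, with the dimension $r(r-1)/2\asymp n^{2\theta_1}$ accounting for the strengthened condition on $\theta_1$. Your explicit remark that $\E[W_{i,j,k}(u)]\equiv 0$ under the null is a point the paper leaves implicit, but otherwise the route and the verifications are the same.
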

Proposition \ref{prop:tp} reveals that under the null hypothesis of parallel mean functions $g_1,\ldots,g_r$  the distribution of the statistic $T_n$ in (\ref{tstat}) can be well approximated by the law of the $L^\infty$ norm of a high-dimensional vector of Gaussian random functions. Theorem \ref{thm:2} implies that the multiplier bootstrap can be used to approximate this $L^\infty$ norm. A combination of these results motivates the following bootstrap test for the hypothesis (\ref{h0}).
\begin{itemize}
\item[(i)] Select a block size $m$, such that $m \to \infty$, $m=o(n)$.
\item[(ii)] For a large integer $B$ generate independent standard normal distributed random variables $\{N_i^h\}_{i=1}^{n-m}$, $h=1,2,\cdots, B$. For each $h$ and pair $(j,k)$, $1\le j<k\le r$, calculate
\begin{eqnarray*}
\mathring \Phi^h_{m,j,k}(u)=\sqrt{\frac{m}{n-m}}\sum_{i=1}^{n-m}[\mathring T_{i,m,j,k}(u)-\mathring S_{n,j,k}(u)/\sqrt{n}]N_i^h/\hat{v}_{j,k}(u),
\end{eqnarray*}
where $\mathring T_{i,m,j,k}(u)=\sum_{a=i}^{i+m} W_{a,j,k}(u)/m$, and define
 $$
\breve{\Phi}^h_{m}=\max_{1\le j<k\le r}\sup_{0\le u\le 1}|\mathring \Phi^h_{m,j,k}(u)|.
$$
\item[(iii)] Let $\breve{\Phi}^{(1)}_{m}\le \cdots\le \breve{\Phi}^{(B)}_{m}$ be the ordered statistics of $\{\breve{\Phi}^{j}_{m}\}_{j=1}^B$ and estimate the quantile $\mathring c_{1-\alpha,n}$ of the distribution of $T_n$  in (\ref{h10}) by  $\breve{\Phi}^{(\lf(1-\alpha)B\rf)}_m$.
    \item[(iv)] Reject the null hypothesis (\ref{h0}) of parallel mean functions, whenever
    \begin{equation}\label{boottest}
      T_n >  \breve{\Phi}^{(\lf(1-\alpha)B\rf)}_m .
    \end{equation}
\end{itemize}

A data driven rule for the choice of the block size in step (a) will be given in Section \ref{sec:tps}.
The asymptotic validity of the above multiplier bootstrap procedure is established in the following two propositions.

\begin{proposition}\label{boots_tp}
Suppose that (\ref{eq:2lip}) and (\ref{ass6})   hold and that $\theta_1<\phi'q^2/[4(q+1)]$. Further assume that the conditions 
of  Proposition \ref{prop:tp} are satisfied. Then, under the null hypothesis (\ref{h0}) of parallel mean functions we have

\begin{eqnarray*}
\lim_{n,r \to \infty}  \lim_{B \to \infty}\p \big (T_n >   \breve{\Phi}^{(\lf(1-\alpha)B\rf)}_m  \big ) = \alpha .
\end{eqnarray*}

\end{proposition}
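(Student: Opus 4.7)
The plan is to reduce Proposition \ref{boots_tp} to a combination of Proposition \ref{prop:tp}, Theorem \ref{thm:2} applied to the process of pairwise differences, and a uniform consistency result for the studentizing factors $\hat v_{j,k}$. The key observation is that the test statistic $T_n$ admits the representation (\ref{h10}) in terms of the partial sums $\mathring S_{n,j,k}$, and so everything reduces to controlling the supremum-maximum functional applied to an $r(r-1)/2$-dimensional functional partial-sum process.

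First, I would verify that the vector process $\{\mathring \bW_i(u)\}_{i=1}^n$ of dimension $r(r-1)/2$ satisfies the hypotheses of Theorem \ref{thm:2}. Each coordinate $W_{i,j,k}(u)=(X_{i,j}(u)-\int_0^1 X_{i,j}(v)dv)-(X_{i,k}(u)-\int_0^1 X_{i,k}(v)dv)$ is a bounded linear functional of the original $\bX_i$, so twice continuous differentiability, the moment bound (\ref{ass0}), the Lipschitz condition (\ref{eq:2lip}) on the second derivative, and the decay (\ref{ass6}) of the physical dependence measures all transfer (with constants inflated by at most a factor of $2$) from $H_j$ to the filters generating $W_{i,j,k}$ and their derivatives. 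The effective ambient dimension is $r(r-1)/2 \asymp r^2 \asymp n^{2\theta_1}$, and the hypothesis $\theta_1 < \phi' q^2/[4(q+1)]$ is exactly $2\theta_1 < \phi' q^2/[2(q+1)]$, which is the dimension constraint of Theorem \ref{thm:2} for the inflated process. Condition (V) for $v_{j,k}$ is assumed, so (\ref{ass3}) holds for the difference process.

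Second, I would combine Proposition \ref{prop:tp} (applied under $H_0$ to approximate the distribution of $T_n$ by its Gaussian analog $\mathring T_n^N := \max_{j<k}\sup_u |\mathring S^N_{n,j,k}(u)/v_{j,k}(u)|$) with Theorem \ref{thm:2} (applied to $\{\mathring\bW_i\}$, producing a conditional Gaussian approximation of the unstudentized multiplier bootstrap statistic $\max_{j<k}\sup_u|\mathring \Phi_{m,j,k}^h(u)|$). To pass from the unstudentized to the studentized bootstrap statistic $\breve\Phi_m^h$ in step (ii), I would prove an analogue of Lemma \ref{lem:hatv} for the difference process, yielding
\[
\Big\|\max_{1\le j<k\le r}\sup_{u\in[0,1]}|\hat v_{j,k}^{\,2}(u)-v_{j,k}^{\,2}(u)|\Big\|_{q/2}=O(r^{4/q}/\sqrt{n})=o(1),
\]
which combined with (V) gives the uniform multiplicative consistency $\max_{j<k}\sup_u|\hat v_{j,k}(u)/v_{j,k}(u)-1|=o_p(1)$. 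The dimension constraint needed here, $2\theta_1<q/4$, is implied by the hypothesis $\theta_1<\phi' q^2/[4(q+1)]$ for any admissible $\phi\in(0,1)$ and $q>4$.

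Third, I would close the argument via an anti-concentration (Nazarov-type) inequality for the supremum of the Gaussian maximum $\mathring T_n^N$. Such an inequality is the standard vehicle (and is already implicit in the chaining/Gaussian-approximation machinery used in Propositions \ref{prop:3} and \ref{prop:tp}) for converting the $o_p(1)$ multiplicative perturbation of the studentizing factors into an $o(1)$ Kolmogorov-distance perturbation of the distribution of the maximum. Chaining the three approximations — Proposition \ref{prop:tp}, Theorem \ref{thm:2} on $\{\mathring\bW_i\}$, and the plug-in studentization step — yields that the conditional distribution function of $\breve\Phi_m^h$ given the data converges uniformly in probability to the distribution function of $T_n$. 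Letting $B\to\infty$ first makes the empirical quantile $\breve\Phi_m^{(\lfloor(1-\alpha)B\rfloor)}$ converge almost surely to the corresponding conditional quantile, and then letting $n,r\to\infty$ produces the asymptotic rejection probability $\alpha$ stated in the proposition.

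The main obstacle is the simultaneous handling of the three sources of approximation error once the ambient dimension is squared: the Gaussian approximation error from Theorem \ref{thm:1} (inherited through Proposition \ref{prop:tp}), the bootstrap error from Theorem \ref{thm:2}, and the studentization error from the plug-in of $\hat v_{j,k}$. Each must be shown to be $o(1)$ uniformly over the $\binom{r}{2}$ coordinates, which is exactly why the admissible range of $\theta_1$ is halved compared to Theorem \ref{thm:2}. The anti-concentration step for Gaussian maxima over an $r^2$-dimensional sup-index set is the most delicate ingredient but is standard given the Gaussian-maximum theory that already underpins the earlier results.
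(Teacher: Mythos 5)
Your proposal is correct and follows essentially the same route as the paper, whose proof of this proposition is simply the remark that it follows from Theorem \ref{thm:2} (applied, as you do, to the $r(r-1)/2$-dimensional difference process, which is exactly why the admissible exponent is halved to $\theta_1<\phi'q^2/[4(q+1)]$), Proposition \ref{prop:tp}, and Lemma \ref{lem:hatv}; your filled-in details (the $O(r^{4/q}/\sqrt{n})$ studentization bound and the Nazarov-type anti-concentration step) are the natural way to make that sketch precise. One small correction: the lower bound (\ref{ass3}) for $\mathring S^2_{n,j,k}$ is not a consequence of condition (V) for $v_{j,k}$ (marginal versus long-run variance), but it is explicitly among the assumed hypotheses of Proposition \ref{prop:tp}, so nothing is lost.
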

Proposition \ref{boots_tp} follows directly from Theorem \ref{thm:2}, Proposition \ref{prop:tp} and Lemma \ref{lem:hatv}. The details are omitted for the sake of brevity. Next, we study the power  of the test  considering the local alternative
\begin{itemize}
\item[${\bf H}_a$ :] There exists a pair $(j,k)$, $1\le j<k\le r$, such that
\begin{equation}\label{halt}
  \inf_{c\in\R}\sup_{u\in[0,1]}|g_j(u)-g_k(u)-c|\sqrt{n/\log n}\rightarrow \infty.
  \end{equation}
\end{itemize}
The following proposition shows that the test (\ref{boottest}) achieves asymptotic power 1 under alternatives of the form (\ref{halt}). Hence this test is able to detect alternatives that deviate from the null hypothesis of parallel mean functions at the rate $\sqrt{\log n/n}$.

\begin{proposition}\label{power_tp}
If the conditions of Proposition \ref{boots_tp} are satisfied, we have    under alternatives of the form (\ref{halt}) that
\begin{eqnarray*}
\lim_{n,r \to \infty}  \lim_{B \to \infty} \p \big (T_n > \breve{\Phi}^{(\lf(1-\alpha)B\rf)}_m  \big )= 1.
\end{eqnarray*}
\end{proposition}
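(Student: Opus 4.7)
The plan is to split the test statistic $T_n$ into a deterministic signal arising from the local alternative and a mean-zero stochastic remainder, and then to show that the signal outgrows both the remainder and the bootstrap cutoff.

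Fix a pair $(j_0,k_0)$ that realizes the alternative \eqref{halt}, and set
\begin{equation*}
\tilde S_{n,j,k}(u) := \mathring S_{n,j,k}(u)-\sqrt{n}\bigl(\nu_j(u)-\nu_k(u)\bigr),
\end{equation*}
so that $\E\tilde S_{n,j,k}(u)=0$ and $\tilde S_{n,j,k}=\mathring S_{n,j,k}$ under $H_0$. The reverse triangle inequality gives
\begin{equation*}
T_n\ge \sup_{u\in[0,1]}\frac{|\mathring S_{n,j_0,k_0}(u)|}{v_{j_0,k_0}(u)}\ge \sup_{u\in[0,1]}\frac{\sqrt{n}\,|\nu_{j_0}(u)-\nu_{k_0}(u)|}{v_{j_0,k_0}(u)}-\sup_{u\in[0,1]}\frac{|\tilde S_{n,j_0,k_0}(u)|}{v_{j_0,k_0}(u)}.
\end{equation*}
Since $\nu_j(u)-\nu_k(u)=g_j(u)-g_k(u)-(g_j^o-g_k^o)$, taking $c=g_{j_0}^o-g_{k_0}^o$ in \eqref{halt} and using the uniform positive lower bound on $v_{j_0,k_0}$ from condition (V) together with a uniform upper bound inherited from the moment hypothesis, the first term in the above display exceeds $C\sqrt{\log n}\,\omega_n$ for some sequence $\omega_n\to\infty$.

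I would next show that both the stochastic remainder and the bootstrap cutoff are of the smaller order $\sqrt{\log n}$. Proposition \ref{prop:tp} applied to the centered process $\tilde S$ (which is legitimate because that proposition uses only the second-order structure of $\mathring S-\E\mathring S$) gives
\begin{equation*}
\max_{1\le j<k\le r}\sup_{u\in[0,1]}\frac{|\tilde S_{n,j,k}(u)|}{v_{j,k}(u)}=O_p(\sqrt{\log n}),
\end{equation*}
via a classical Gaussian maxima bound for $\mathring\bS_n^N/v$, after the supremum over $u$ is replaced by a sufficiently dense grid using the chaining of Proposition \ref{prop:3}. For the bootstrap cutoff, the validity of the multiplier construction \eqref{phim} in Theorem \ref{thm:2} does not depend on the mean of $\bX_i$, since the blocks $\bT_{i,m}(u)$ are centered by $\bS_n(u)/\sqrt{n}$; hence, on the event $E_n$ of probability tending to one, the conditional law of $\max_{j<k}\sup_u|\mathring\Phi_{m,j,k}^h(u)|$ given the data matches that of $\max_{j<k}\sup_u|\mathring S_{n,j,k}^N(u)/v_{j,k}(u)|$ up to $o_p(1)$, and Gaussian concentration pins the $(1-\alpha)$-quantile of the latter at $O(\sqrt{\log n})$. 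Replacing $v_{j,k}$ by $\hat v_{j,k}$ uniformly in $(j,k,u)$ is handled by the direct pair-analogue of Lemma \ref{lem:hatv} under the assumed rate $\theta_1<\phi'q^2/[4(q+1)]$.

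Combining the three bounds yields $T_n-\breve\Phi_m^{(\lf(1-\alpha)B\rf)}\ge C'\sqrt{\log n}\,(\omega_n-O_p(1))\to\infty$ in probability, so the rejection probability tends to one. The principal technical obstacle is the uniform $O_p(\sqrt{\log n})$ bound on the Gaussian maximum over $r(r-1)/2\asymp n^{2\theta_1}$ pairs \emph{and} over $u\in[0,1]$: it requires gluing a discrete Gaussian-maximum estimate to the chaining of Proposition \ref{prop:3} while propagating the normalization $1/v_{j,k}(u)$ through both steps, and this is the only place where condition (V) together with the strengthened moment and dependence hypotheses enter in an essential way.
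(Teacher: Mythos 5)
Your proposal is correct and follows essentially the same route as the paper's proof: center $\mathring S_{n,j,k}$ at its mean, use the Gaussian approximation (Propositions \ref{prop:tp} and \ref{boots_tp}) to bound both the centered maximum and the bootstrap quantile by $O_p(\sqrt{\log n})$ via Gaussian maximal/Orlicz-norm arguments, and note that under \eqref{halt} the signal term $\sqrt{n}\sup_u|\nu_{j_0}(u)-\nu_{k_0}(u)|/v_{j_0,k_0}(u)$ grows faster than $\sqrt{\log n}$, so the statistic dominates the cutoff. The only cosmetic difference is that you isolate one violating pair via the reverse triangle inequality and handle the sup over $u$ by chaining plus a discrete Gaussian maximum, whereas the paper works directly with the centered maximum $T^*$ over all pairs and bounds its Orlicz norm; these are equivalent in substance.
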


\subsection{Tuning parameter selection}\label{sec:tps}
To implement the multiplier bootstrap, one needs to choose the block size $m$. Observe that the quality of the bootstrap depends on how well the conditional covariance operator of $\bPhi_{m}$ in (\ref{phim})  approximates the covariance operator of $\bS^N_n$ in (\ref{eq:norm}). The tuning parameter $m$ controls the accuracy for the latter approximation. When $m$ is too large, the variance of the conditional covariance operator of $\bPhi_{m}$ will be too large; while if $m$ is too small, then the latter conditional covariance operator will have too much bias. In this paper, we adopt the minimum volatility (MV) method proposed in \cite{politis1999subsampling} to select the block size $m$. The idea behind the MV method is that the conditional covariance operator of $\bPhi_{m}$ should behave stably as a function of $m$ when $m$ is in an appropriate range. Therefore one could select $m$ that minimizes the variability of the conditional covariance operator of $\bPhi_{m}$ as a function of $m$.  We refer the readers to \cite{politis1999subsampling} and \cite{zhou2013} for more detailed discussions of the MV method and its applications in time series analysis. 

Specifically, one  first determines a sequence of equally-spaced candidate block sizes $m_1<m_2<\cdots<m_k$ and defines $m_0=2m_1-m_2$ and $m_{k+1}=2m_k-m_{k-1}$. For each $i \in \{0,\ldots,k+1 \}$, one calculates
\begin{eqnarray*}
\Xi^{(i)}_{j}(u):=\sqrt{\frac{m_i}{n-m_i}}\sum_{k=1}^{n-m_i}[T_{i,j,m}-S_{n,j}(u)/\sqrt{n}]^2, ~j=1,2,\cdots, r,
\end{eqnarray*}
where $T_{i,j,m}$ denotes the $j$th component of $\bT_{i,m}$. Note that $\Xi^{(i)}_{j}(u)$ is an estimator of the marginal variance operator of $S^N_{n,j}(u)$. Next, one calculates for $i=1,2,\cdots, k$
$$\Xi^{(i)}:=\sum_{j=1}^r\int_{0}^1\mbox{sd}(\{\Xi^{(l)}_{j}(u)\}_{l=i-1}^{i+1})\,du,$$
where $\mbox{sd}$ denotes standard deviation. Observe that $\Xi^{(i)}$ measures the overall variability of $\Xi^{(i)}_{j}(u)$ (with respect to $i$) at candidate block size $m_i$. We finally recommend selecting the block size $m_i$ such that  $i=\argmin_{1\le i\le k}\Xi^{(i)}$. The MV method performs well in our  simulation studies, which are presented in the following section.

\section{Numerical experiments}
\label{sec5}
\def\theequation{5.\arabic{equation}}
\setcounter{equation}{0}

In this section we illustrate the application of the methodology by means of a small simulation study and by the analysis of a real data example.
\subsection{Simulation study}
\label{sec51}
In the Monte Carlo simulations, we consider the following two types of simple panel time series models:
\begin{itemize}
\item[] The  {\bf PAR($a$)-model}   is defined as the  panel autoregressive functional  model
$$
\bX_i(u)=\bg(u)+ A_r\be_i(u),
$$
where $A_r$ is an $r\times r$ tridiagonal matrix with 1's on the diagonal and $1/2$'s on the off-diagonal,
 $\be_i(u)=(e_{i,1}(u),\cdots,e_{i,r}(u))^\top$ is an $r$-dimensional vector
with independent entries  satisfying the AR-equation
\begin{equation} \label{ar}
e_{i,j}(u)=a e_{i-1,j}(u)+\epsilon_{i,j}(u)~, ~j=1,\ldots ,r~.
\end{equation}
and  $a\in[0,1)$ denotes  the auto-regressive (AR) coefficient that controls the strength of the temporal dependence.
The innovations  in (\ref{ar}) are  given by    i.i.d. random functions
  $$
  \epsilon_{i,j}(u)=\sum_{k=1}^\infty k^{-3}[\cos(2\pi k u)\epsilon_{i,j,k}+\sin(2\pi k u)\epsilon_{i,j,k}] ,~i=1,2,\ldots , ~j=1,\ldots ,r~,
  $$
  where the infinite-dimensional  vector
   $\epsilonv_{i,j}:=(\epsilon_{i,j,1},\epsilon_{i,j,2},\cdots)^\top$ satisfies $\epsilonv_{i,j}=A_\infty \epsilonv'_{i,j} $,
   $A_\infty$  is the infinite-dimensional tridiagonal matrix with $1$'s on the diagonal and $1/2$'s on the off-diagonal and
    \begin{equation} \label{eps}
    \epsilonv'_{i,j}:=(\epsilon'_{i,j,1},\epsilon'_{i,j,2},\cdots)^\top
    \end{equation}
     are i.i.d.  infinite-dimensional random vectors with i.i.d. entries at each component.

\item[] The {\bf PMA($a$)-model} is defined as the   panel moving average  functional  model.
The setup is the same as that of PAR-model except that equation  (\ref{ar}) is replaced by a moving average (MA)
representation 
\begin{equation*} 
 e_{i,j}(u)=\epsilon_{i,j}(u)+a \epsilon_{i-1,j}(u),
 \end{equation*}
  where $a$ is the MA-coefficient that controls the temporal dependence.
\end{itemize}

Observe that the functions  $\bX_i$  in the PAR($a$)- and PMA($a$)-model are twice  but not three times differentiable and
that the components  $ X_{i,j_1}(u)$ and $ X_{i,j_2}(u)$  are independent if $|j_1-j_2|>  1$.
 By construction, the Fourier coefficients $\epsilon_{i,j,k}$ as a sequence of $k$ are correlated.  In the simulation studies, the entries
  $\epsilon'_{i,j,k}$ in the vector (\ref{eps}) are either  i.i.d.  standard normal distributed random variables (denoted by  ${\cal N} (0,1)$)
   or i.i.d.  standardized $t$-distributed random variables with $6$ degrees of freedom (denoted by  $\sqrt{2/3}t_{6}$) to represent light tailed and heavy tailed cases,
   respectively (note that the standard deviation of a $\sqrt{2/3}t_{6}$-distributed  random variable is $1$). Throughout the simulations, the block size is selected by the MV method described in Section \ref{sec:tps}.

\begin{table}[t]
\begin{center}
\begin{tabular}{|l|l|l|l|l|l|l|l|l|l|l|}
\hline
\multicolumn{11}{|c|}{$n=200$}                                                                               \\ \hline
       & \multicolumn{5}{c|}{$1-\alpha=0.95$}               & \multicolumn{5}{c|}{$1-\alpha=0.9$}                \\ \hline
      $r$ & {\scriptsize PAR(0)} & {\scriptsize PAR(.2)} &  {\scriptsize PAR(.5)} & {\scriptsize PMA(.5)} & {\scriptsize PMA(1)}  & {\scriptsize PAR(0)} & {\scriptsize PAR(.2)} &  {\scriptsize PAR(.5)} & {\scriptsize PMA(.5)} & {\scriptsize PMA(1)} \\ \hline
$5$  & .941& .944& .907& .939& .942& .893& .878& .830& .878& .875   \\ \hline
$10$ & .940& .934& .916& .934& .941& .891& .870& .824& .881& .878   \\ \hline
$20$ & .938& .943& .919& .945& .936& .884& .890& .817& .887& .869   \\ \hline
$40$ & .942& .942& .914& .943& .941& .885& .871& .818& .873& .873   \\ \hline
$80$ & .943& .941& .912& .944& .934& .902& .889& .808& .871& .867   \\ \hline
\multicolumn{11}{|c|}{$n=400$}                                                                               \\ \hline
       & \multicolumn{5}{c|}{$1- \alpha=0.95$}               & \multicolumn{5}{c|}{$1- \alpha=0.9$}                \\ \hline
       $r$ & {\scriptsize PAR(0)} & {\scriptsize PAR(.2)} &  {\scriptsize PAR(.5)} & {\scriptsize PMA(.5)} & {\scriptsize PMA(1)} & {\scriptsize PAR(0)} & {\scriptsize PAR(.2)} &  {\scriptsize PAR(.5)} & {\scriptsize PMA(.5)} & {\scriptsize PMA(1)} \\ \hline
$5$  & .948& .947& .941& .945& .944& .892& .880& .887& .899& .893   \\ \hline
$10$ & .948& .957& .934& .939& .939& .898& .890& .862& .877& .872   \\ \hline
$20$ & .954& .953& .946& .943& .943& .897& .891& .878& .885& .891   \\ \hline
$40$ & .955& .944& .947& .949& .940& .905& .885& .873& .885& .888   \\ \hline
$80$ & .956& .946& .942& .955& .943& .903& .908& .857& .896& .880    \\ \hline
\end{tabular}
\end{center}
\caption{{\it Simulated coverage probabilities of the JSCB   in  the  PAR- and PMA-model. The errors in (\ref{eps}) are   $\mathcal{N}(0,1)$-distributed and $1-\alpha$ represents the nominal coverage probability.}}\label{tab:1}
\end{table}

\subsubsection{Coverage accuracy of JSCB} \label{sec511}
In order to investigate the approximation of the confidence level of the JSCB for finite sample sizes, we consider the function $\bg(u)=0$ in the  PAR($a$)- and PMA($a$)-model. The sample size $n$ is chosen as $200$ and $400$ and the dimension $r$ varies from $5$ to $80$ in order  to investigate the impact of the dimensionality on the coverage probability. The number of bootstrap replications   is chosen as $B=1000$ and for each scenario $1000$ simulation runs are performed. The simulated coverage probabilities of the JSCB under both light and heavy tailed errors and various levels of time series dependence are reported in Tables \ref{tab:1} and \ref{tab:2}.

We observe that the performances of the JSCB under light and heavy tailed errors are similar. When $n=200$, the JSCB is reasonably accurate for all dimensions when the time series dependence is moderate. 
Under stronger time series dependence, such as for the PAR($0.5$)-model, the coverage probabilities of the JSCB are slightly smaller than the nominal confidence level. However, the approximation improves significantly if the sample size  increases to $n=400$. In this case, the coverage of the JSCB is reasonably accurate in all cases under consideration.  The impact of the dimension on the accuracy of the approximation of the confidence level is hardly visible, but we observe a slightly better performance of the JSCB for smaller dimensions $r=5,10$ compared to the cases $r=20,40,80$.

\begin{table}[t]
\begin{center}
\begin{tabular}{|l|l|l|l|l|l|l|l|l|l|l|}
\hline
\multicolumn{11}{|c|}{$n=200$}                                                                               \\ \hline
       & \multicolumn{5}{c|}{$1-\alpha=0.95$}               & \multicolumn{5}{c|}{$1-\alpha=0.9$}                \\ \hline
     $r$  & {\scriptsize PAR(0)} & {\scriptsize PAR(.2)} &  {\scriptsize PAR(.5)} & {\scriptsize PMA(.5)} & {\scriptsize PMA(1)}  & {\scriptsize PAR(0)} & {\scriptsize PAR(.2)} &  {\scriptsize PAR(.5)} & {\scriptsize PMA(.5)} & {\scriptsize PMA(1)} \\ \hline
$5$  & .941& .944& .919& .939& .937& .884& .887& .83 & .873& .874   \\ \hline
$10$ & .942& .943& .918& .938& .943& .895& .880& .827& .873& .878   \\ \hline
$20$ & .942& .944& .911& .936& .945& .892& .880& .803& .876& .883   \\ \hline
$40$ & .934& .943& .916& .937& .944& .877& .888& .812& .877& .885   \\ \hline
$80$ & .944& .954& .925& .935& .940& .877& .871& .814& .872& .868   \\ \hline
\multicolumn{11}{|c|}{$n=400$}                                                                               \\ \hline
       & \multicolumn{5}{c|}{$1-\alpha=0.95$}               & \multicolumn{5}{c|}{$1-\alpha=0.9$}                \\ \hline
      $r$  & {\scriptsize PAR(0)} & {\scriptsize PAR(.2)} &  {\scriptsize PAR(.5)} & {\scriptsize PMA(.5)} & {\scriptsize PMA(1)} & {\scriptsize PAR(0)} & {\scriptsize PAR(.2)} &  {\scriptsize PAR(.5)} & {\scriptsize PMA(.5)} & {\scriptsize PMA(1)} \\ \hline
$5$  & .952& .939& .939& .948& .940& .906& .889& .883& .885& .888   \\ \hline
$10$ & .947& .945& .936& .944& .943& .898& .899& .875& .883& .874   \\ \hline
$20$ & .955& .952& .923& .946& .950& .904& .886& .857& .881& .890   \\ \hline
$40$ & .946& .944& .941& .950& .942& .896& .887& .861& .887& .885   \\ \hline
$80$ & .947& .956& .929& .954& .945& .893& .901& .856& .902& .886    \\ \hline
\end{tabular}
\end{center}
\caption{{\it Simulated coverage probabilities of
 the JSCB   in the PAR- and PMA-model. The errors in (\ref{eps}) are    $\sqrt{2/3}t_6$-distributed and $1-\alpha$ represents the nominal coverage probability.}}\label{tab:2}
\end{table}

\begin{table}[t]
\begin{center}
\begin{tabular}{|l|l|l|l|l|l|l|l|l|l|l|}
\hline
\multicolumn{11}{|c|}{$n=200$}                                                                               \\ \hline
       & \multicolumn{5}{c|}{$\alpha=0.05$}               & \multicolumn{5}{c|}{$\alpha=0.1$}                \\ \hline
   $r$    & {\scriptsize PAR(0)} & {\scriptsize PAR(.2)} &  {\scriptsize PAR(.5)} & {\scriptsize PMA(.5)} & {\scriptsize PMA(1)}  & {\scriptsize PAR(0)} & {\scriptsize PAR(.2)} &  {\scriptsize PAR(.5)} & {\scriptsize PMA(.5)} & {\scriptsize PMA(1)} \\ \hline
$5$  & .042& .062& .076& .052& .066& .114& .130& .142& .102& .120   \\ \hline
$10$ & .058& .054& .078& .062& .060& .124& .118& .164& .110& .110   \\ \hline
$20$ & .060& .050& .080& .066& .058& .126& .122& .202& .132& .152   \\ \hline
$30$ & .066& .064& .094& .056& .066& .128& .140& .190& .134& .140   \\ \hline
\multicolumn{11}{|c|}{$n=400$}                                                                               \\ \hline
       & \multicolumn{5}{c|}{$\alpha=0.05$}               & \multicolumn{5}{c|}{$\alpha=0.1$}                \\ \hline
   $r$     & {\scriptsize PAR(0)} & {\scriptsize PAR(.2)} &  {\scriptsize PAR(.5)} & {\scriptsize PMA(.5)} & {\scriptsize PMA(1)} & {\scriptsize PAR(0)} & {\scriptsize PAR(.2)} &  {\scriptsize PAR(.5)} & {\scriptsize PMA(.5)} & {\scriptsize PMA(1)} \\ \hline
$5$  & .052& .048& .056& .052& .052& .108& .100& .124& .106& .110   \\ \hline
$10$ & .048& .052& .066& .052& .056& .100& .122& .134& .116& .102   \\ \hline
$20$ & .058& .058& .068& .054& .048& .114& .110& .138& .120& .114   \\ \hline
$30$ & .050& .048& .072& .038& .060& .102& .110& .124& .106& .136   \\ \hline
\end{tabular}
\end{center}
\caption{{\it Simulated type I errors  of the test \eqref{boottest} for parallelism  of the mean functions  in the    PAR- and PMA-model. The errors in (\ref{eps}) are  $\mathcal{N}(0,1)$-distributed and  $\alpha$ represents the nominal level of the test.}}\label{tab:3}
\end{table}

\subsubsection{Accuracy and power of parallelism test}
In order to investigate the finite sample accuracy of the  test for parallelism, we consider the functions
 $$
 g_i(u)=u^2-u+c_i, \quad i=1,2,\cdots, r,
  $$
  in the PAR($a$) and PMA($a$)-model, where $c_i$ are i.i.d. standard normal distributed random variables. Observe that the mean functions $g_i(u)$, $1\le i\le r$ are parallel. Various values for the dimension   $r$ from $5$ to $30$ and various values of the parameter $a$ are selected to investigate the impact of the  dimension and temporal dependence on the accuracy of the test, respectively. Since the parallelism test compares mean curves for every pair of the $r$ component functional time series,  the effective dimensionality of the test is ${r \choose 2}$. Hence when $r=30$, we are effectively testing a 435 dimensional function. Throughout this subsection, the number of bootstrap replications   is chosen as $B=1000$ and for each scenario, $500$ simulations are used for the calculation of the rejection probabilities.

     The simulated Type I error rates of the  test for parallelism under light and heavy tailed errors are reported in Tables \ref{tab:3} and \ref{tab:4}, respectively. We observe    similar results as in Section \ref{sec511}. There are no significant differences in the   approximation of the nominal level   for light and   heavy tails, and the test \eqref{boottest} is reasonably accurate when the time series dependence is not too strong. Moreover, the 
      test becomes  also  reasonably accurate under stronger temporal dependence for larger sample size (here $n= 400$). The 
      approximation of the nominal level by  the test \eqref{boottest} is only slightly better for lower dimensions ($r=5,10$) compared with   higher dimensions ($r=20,30$).

\begin{table}[t]
\begin{center}
\begin{tabular}{|l|l|l|l|l|l|l|l|l|l|l|}
\hline
\multicolumn{11}{|c|}{$n=200$}                                                                               \\ \hline
       & \multicolumn{5}{c|}{$\alpha=0.05$}               & \multicolumn{5}{c|}{$\alpha=0.1$}                \\ \hline
    $r$   & {\scriptsize PAR(0)} & {\scriptsize PAR(.2)} &  {\scriptsize PAR(.5)} & {\scriptsize PMA(.5)} & {\scriptsize PMA(1)}  & {\scriptsize PAR(0)} & {\scriptsize PAR(.2)} &  {\scriptsize PAR(.5)} & {\scriptsize PMA(.5)} & {\scriptsize PMA(1)} \\ \hline
$5$  & .050& .054& .070& .050& .058& .098& .118& .166& .116& .122   \\ \hline
$10$ & .050& .056& .086& .058& .054& .104& .124& .172& .128& .134   \\ \hline
$20$ & .048& .058& .088& .054& .066& .108& .108& .196& .116& .140   \\ \hline
$30$ & .064& .054& .102& .066& .042& .112& .118& .222& .134& .124   \\ \hline
\multicolumn{11}{|c|}{$n=400$}                                                                               \\ \hline
       & \multicolumn{5}{c|}{$\alpha=0.05$}               & \multicolumn{5}{c|}{$\alpha=0.1$}                \\ \hline
    $r$    & {\scriptsize PAR(0)} & {\scriptsize PAR(.2)} &  {\scriptsize PAR(.5)} & {\scriptsize PMA(.5)} & {\scriptsize PMA(1)} & {\scriptsize PAR(0)} & {\scriptsize PAR(.2)} &  {\scriptsize PAR(.5)} & {\scriptsize PMA(.5)} & {\scriptsize PMA(1)} \\ \hline
$5$  & .058& .058& .062& .052& .062& .112& .104& .124& .110& .112   \\ \hline
$10$ & .050& .046& .060& .054& .050& .100& .112& .130& .124& .124   \\ \hline
$20$ & .056& .058& .056& .064& .056& .106& .108& .126& .112& .128   \\ \hline
$30$ & .056& .048& .068& .064& .046& .108& .104& .133& .126& .116   \\ \hline
\end{tabular}
\end{center}
\caption{{\it Simulated type I errors   of the test \eqref{boottest} for parallelism  of the mean functions   in the PAR- and PMA-model. The errors in (\ref{eps}) are   $\sqrt{2/3}t_6$-distributed and $\alpha$ represents the nominal level of the test.}}\label{tab:4}
\end{table}

Next, we investigate the finite sample performance of the test  \eqref{boottest} 
under the alternative. To this end, we let 
$$
g_1(u)=u^2+(b-1)u+c_1~\mbox{  and }~~g_i(u)=u^2-u+c_i, ~~i=2,3,\cdots, r, 
$$
where $c_i$, $i=1,2,\cdots, r$ are i.i.d. standard normal random variables, $b\ge 0$ is a constant that controls the magnitude of deviation from the null hypothesis. In particular the choice  $b=0$ corresponds to parallel mean functions and  larger values for $b$ indicate  a more substantial deviation from the null hypothesis. Further observe that there is only one curve that is not parallel to the rest. For the  simulations of the rejection probabilities under the alternative, we investigate a PAR($0.2$) model with $n=200$ observations.  The simulated rejection rates as a function of $b$ are plotted in Figure \ref{Figure1} for dimensions $r=20$ and $30$, respectively.  We observe from Figure \ref{Figure1} that the simulated power of the test 
\eqref{boottest} increases very fast as $b$ increases. In particular, the simulated power of the test becomes reasonably high when $b$ is larger than $0.15$. This observation is consistent with our theoretical finding that the parallelism test is able to detect alternatives converging to the null hypothesis with a rate $\sqrt{\log r/n}$. Additionally, we find that the simulated power for the dimension  $r=20$ is higher than for    dimension $r=30$. This observation is also consistent with the theoretical  $\sqrt{\log r/n}$ rate for the detection of local alternatives.

\begin{figure}[t]
	\centering
	\includegraphics[width=7cm,height=17cm, angle=90]{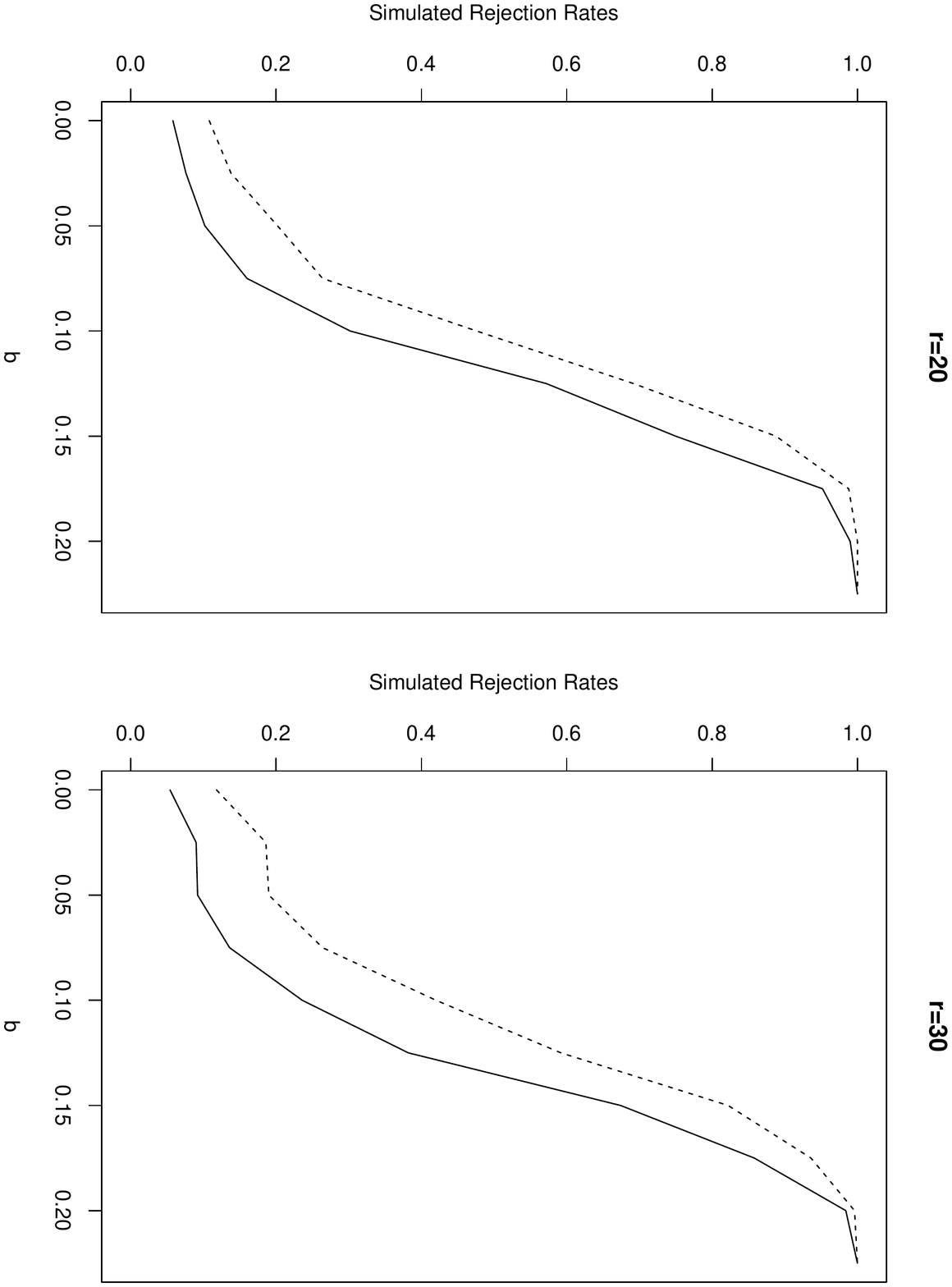}
	\vspace{-1cm}
	\caption{{\it Simulated rejection rates of the  test \eqref{boottest} for the hypothesis of  parallel mean functions  
	with $r=20$ and $30$. Solid and dashed lines represent the rejection rates under nominal levels $0.05$ and $0.1$, respectively.}}
	\label{Figure1}
	\end{figure}

\subsection{Analysis of Canadian temperature data}
\label{sec52}
In this section we analyze daily mean temperature records from 1902 to 2018 in 15 representative Canadian cities as an empirical illustration of our methodology. The cities chosen are Calgary, Charlottetown, Edmonton, Halifax, Hamilton, Kitchener, London, Montreal, Ottawa, Quebec, Saskatoon, Toronto, Vancouver, Victoria, and Windsor. The data are publicly available from the government of Canada website: \url{https://climate.weather.gc.ca/historical_data/search_historic_data_e.html}. The daily temperature records of each year are smoothed using local linear kernel smoothing to represent the overall yearly functional pattern of the temperature records. As a result, we obtain a panel functional time series with temporal dimension $n=117$ and spatial dimension $r=15$. Figure \ref{Figure2} below shows 3D plots of the functional time series of mean temperature in Toronto from two different angles. It can be seen that the yearly mean temperature curves are of similar shape which depicts clear seasonal variation. Additionally, substantial temperature variability can be seen from the curves among different years. \\
\begin{figure}
	\centering
	\includegraphics[width=17.5cm,height=9cm]{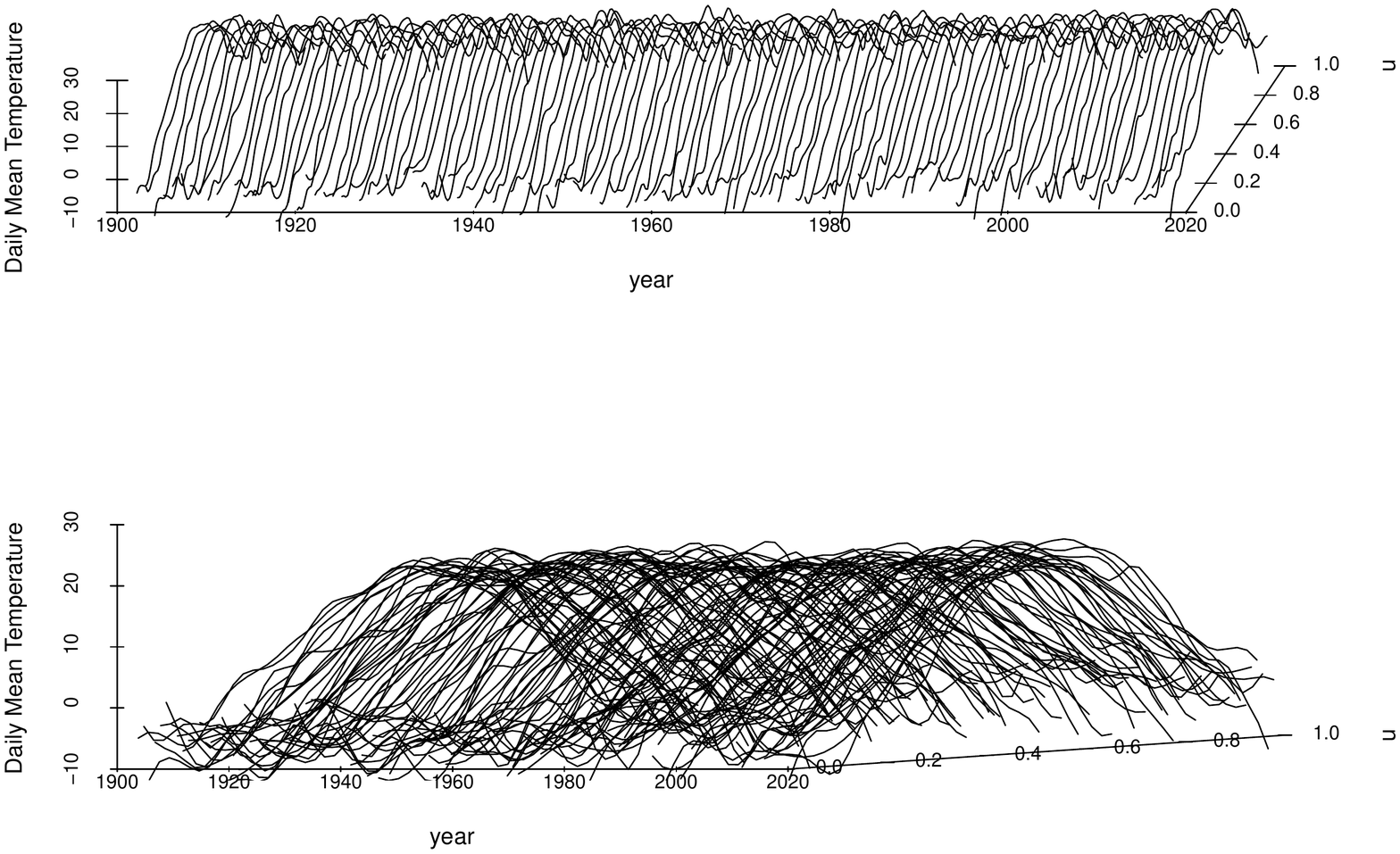}
	\vspace{-1cm}
	\caption{{\it 3D functional time series plots from two different angles for the mean temperature curves of Toronto (1902-2018).}}
	\label{Figure2}
	\end{figure}	
Note that we expect spatial correlation among the panels as some selected cities are close in distance. We are interested in constructing JSCB of the overall yearly pattern of temperature for the 15 cities across Canada. Additionally, we are interested in identifying which cities are similar in yearly temperature patterns in the sense that their temperature curves are parallel to each other.  As the cities chosen in the study are scattered across four climate zones: Atlantic Canada, Great Lakes/St. Lawrence Lowlands, Prairies, and Pacific Coast, we are also interested in checking whether the temperature patterns of the cities can be grouped according to their climate zones.

\begin{figure}
	\centering
	\includegraphics[width=20cm,height=16.5cm,angle=90]{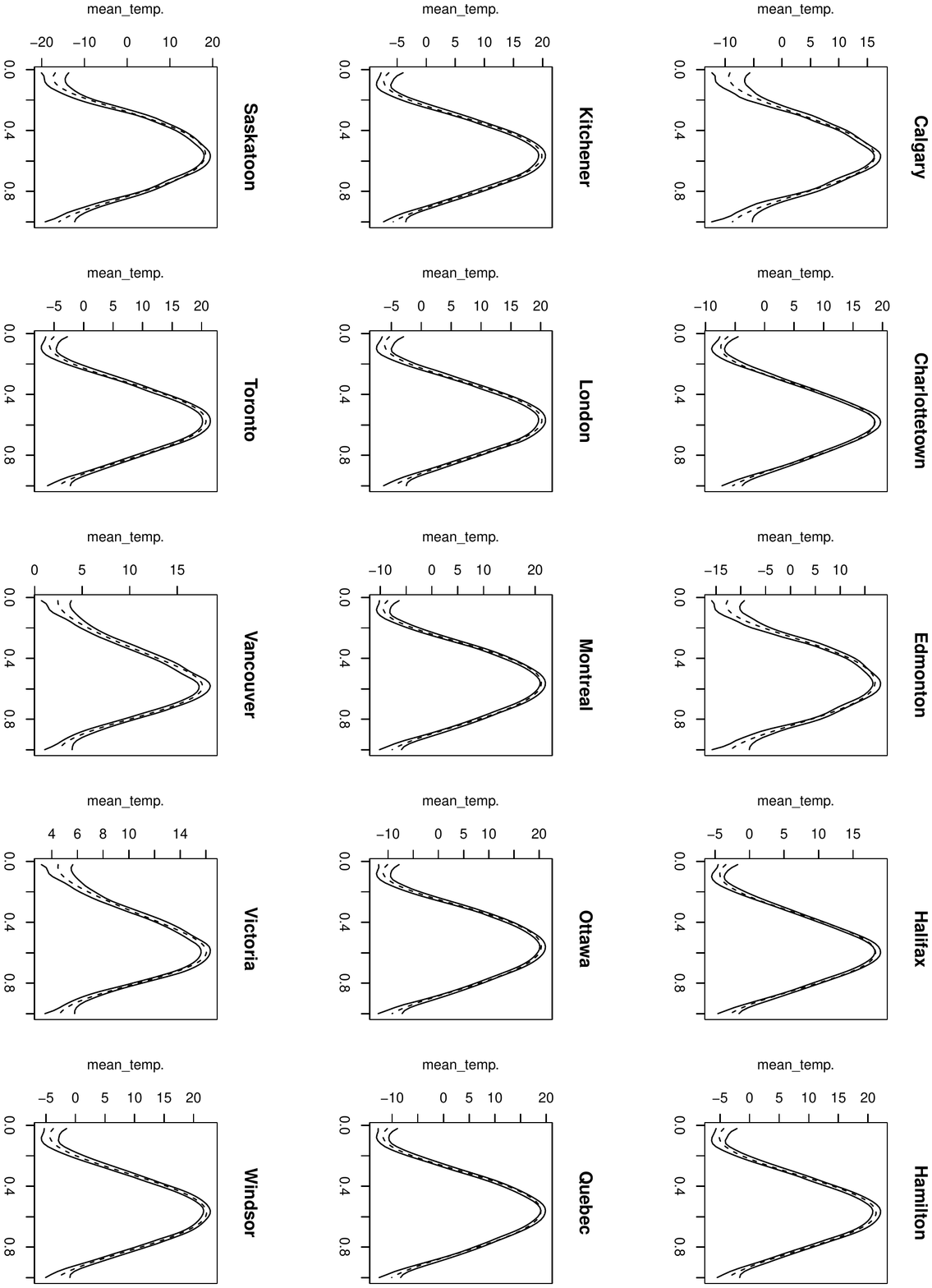}
	\vspace{-1.8cm}
	\caption{{\it 95\% JSCB for the Canadian temperature data. The solid lines represent the upper and lower bounds of the JSCB in (\ref{creg}) with the functions defined in (\ref{hd21}). The dotted lines in the middle represent the fitted harmonic curves $g_i(u)=c_i+\sum_{j=1}^2(a_{i,j}\cos(2\pi j u)+b_{i,j}\sin(2\pi j u))$, $i=1,2,\cdots,15$.}}
	\label{Figure3}
	\end{figure}
	
The $95\%$ JSCB of the yearly temperature pattern of the 15 cities are plotted in
Figure \ref{Figure3}. This JSCB is based on 10,000 bootstrap replications with block size $m=10$ which was selected by the MV method. We can see from the plots that all yearly temperature patterns reflect clear seasonal variation of winter lows and summer highs.  A closer look at the plots shows that Calgary, Edmonton and Saskatoon have significantly wider bands in winter than the other cities. Therefore we conclude that the winter mean temperature in cities of the Prairies zone has more year-to-year variation than the other zones. 

As all temperature curves are similar in shape to the sinusoidal functions, we would like to test the hypothesis 
$$
H_0^k : g_i(u)=c_i+\sum_{j=1}^k(a_{i,j}\cos(2\pi j u)+b_{i,j}\sin(2\pi j u))~, ~i=1,2,\cdots,15.
$$
 Observe that one will encounter multiple testing problems if the cities are tested separately. Here we perform the tests by fitting the harmonic functions to the mean temperature curves and then checking whether all fitted functions can be fully embedded into the 95\% JSCB. When $k=1$, most fitted harmonic curves are not covered by the JSCB. When $k=2$, the fitted harmonic curves are plotted in Figure \ref{Figure3}. It can be seen that all 15 harmonic curves are fully covered by the 95\% JSCB. These results   suggest  that one can use the parametric mean model $g_i(u)=c_i+\sum_{j=1}^2(a_{i,j}\cos(2\pi j u)+b_{i,j}\sin(2\pi j u))$, $i=1,2,\cdots,15$ for the Canadian temperature data.

For the test of parallelism, we follow the implementation steps in Section \ref{sec:tp} with block sizes $m=10$ and $B=10,000$ bootstrap replications. Observe that there are 105 pairs to be compared and that the sample size is $n=117$. Therefore the dimensionality here is almost as large as the sample size. The resulted pairwise test statistics range from 3.0 to 93 and the critical values of the test at 0.1\%, 0.5\%, 1\%, 5\% and 10\% are 7.9, 7.04, 6.61, 5.50 and 5.04, respectively.    
In Table \ref{Figure4} we  display the pairwise 
$p$-values of the parallelism test obtained by the critical values listed above. Note that those pair-wise $p$-values do not suffer from multiple testing problems as \eqref{boottest}  is a joint test of parallelism.
Clearly, the overall test of parallelism is rejected with a very strong evidence. A closer look at Table \ref{Figure4} shows that Hamilton, London, Kitchener, Toronto and Windsor can be grouped together in term of temperature parallelism. And another group of cities with parallel temperature patterns is Quebec, Ottawa and Montreal.  The first group of cities are all in the area of southeast Ontario surrounded by the great lakes. The second group of cities are located near the St. Lawrence River in the St. Lawrence Lowlands. On the other hand, we also observe at some close cities   significantly different temperature patterns. Two such interesting pairs are Vancouver and Victoria and Calgary and Edmonton.

\begin{table}[t]
 \tiny
\begin{center}
\begin{tabular}{|l|r|r|r|r|r|r|r|r|r|r|r|r|r|r|r|}
\hline
& Ham & Lon & Kit & Tor & Win & Que & Ott & Mon & Cal & Edm & Cha & Hal & Vic & Van & Sas \\
\hline
Ham &&&&&&&&&&&&&&& \\
\hline
Lon & $0.2$ &&&&&&&&&&&&&& \\
\hline
Kit & $<0.1$ & $0.3$ &&&&&&&&&&&&& \\
\hline
Tor & $>10$ & $3.6$ & $1.8$ &&&&&&&&&&&& \\
\hline
Win & $>10$ & $>10$ & $>10$ & $>10$ &&&&&&&&&&& \\
\hline
Que & $<0.1$ & $<0.1$ & $<0.1$ & $<0.1$ & $<0.1$ &&&&&&&&&& \\
\hline
Ott & $<0.1$ & $<0.1$ & $<0.1$ & $<0.1$ & $<0.1$ & $5$  &&&&&&&&& \\
\hline
Mon & $<0.1$ & $<0.1$ & $<0.1$ & $<0.1$ & $<0.1$ & $>10$ & $<0.1$  &&&&&&&& \\
\hline
Cal & $0.6$ & $<0.1$ & $0.3$ & $0.1$ & $0.4$ & $<0.1$ & $<0.1$ & $<0.1$ &&&&&&& \\
\hline
Edm & $<0.1$ & $<0.1$ & $<0.1$ & $<0.1$ & $<0.1$ & $<0.1$ & $0.5$ & $<0.1$ & $<0.1$ &&&&&& \\
\hline
Cha & $<0.1$ & $<0.1$ & $<0.1$ & $<0.1$ & $<0.1$ & $<0.1$ & $<0.1$ & $<0.1$ & $<0.1$ & $<0.1$ &&&&& \\
\hline
Hal & $<0.1$ & $<0.1$ & $<0.1$ & $<0.1$ & $<0.1$ & $<0.1$ & $<0.1$ & $<0.1$ & $<0.1$ & $<0.1$ & $<0.1$ &&&& \\
\hline
Vic & $<0.1$ & $<0.1$ & $<0.1$ & $<0.1$ & $<0.1$ & $<0.1$ & $<0.1$ & $<0.1$ & $<0.1$ & $<0.1$ & $<0.1$ & $<0.1$ &&&  \\
\hline
Van & $<0.1$ & $<0.1$ & $<0.1$ & $<0.1$ & $<0.1$ & $<0.1$ & $<0.1$ & $<0.1$ & $<0.1$ & $<0.1$ & $<0.1$ & $<0.1$ & $<0.1$ &&  \\
\hline
Sas & $<0.1$ & $<0.1$ & $<0.1$ & $<0.1$ & $<0.1$ & $<0.1$ & $<0.1$ & $<0.1$ & $<0.1$ & $<0.1$ & $<0.1$ & $<0.1$ & $<0.1$ & $<0.1$ &  \\
\hline
\end{tabular}
\end{center}
\caption{\it Table of pairwise parallelism test $p$-values (in percentage) for the Canadian temperature data.
	\label{Figure4}}
\end{table}

In order to carry out a further investigation, the 95\% JSCB for the mean temperature differences of the two pairs of cities are plotted in Figure \ref{Figure5}.   Vancouver and Victoria are both on the southern Pacific coast of Canada and are within 100 kilometers in distance.  However,  Figure \ref{Figure5} shows that Victoria is 1-2 degrees Celsius warmer than Vancouver in the winter and 1.5 to 2.5 degrees Celsius cooler than Vancouver in the summer. Therefore the two cities are significantly different in yearly temperature pattern. This may be due to the fact that Victoria is located in the Vancouver Island and the island adjusts the temperature significantly. Calgary and Edmonton are major cities of Alberta. Both cities are in the Prairies with Edmonton about 300 kilometers north of Calgary.  Figure \ref{Figure5} shows that Edmonton is significantly colder than Calgary in the winter but significantly warmer in the summer. Interestingly, we observe from Table  \ref{Figure4} that the temperature pattern of Edmonton has some similarities (highest $p$-value around 0.5\%) to that of the St. Lawrence Lowland group while the temperature pattern of Calgary has some similarities (highest $p$-value around 0.5\%) to that of the great lakes group. It is unknown why this is the case and further investigations with geography and climatology knowledge are needed.


\begin{figure}[h]
	\centering
	\includegraphics[width=16cm,height=8cm]{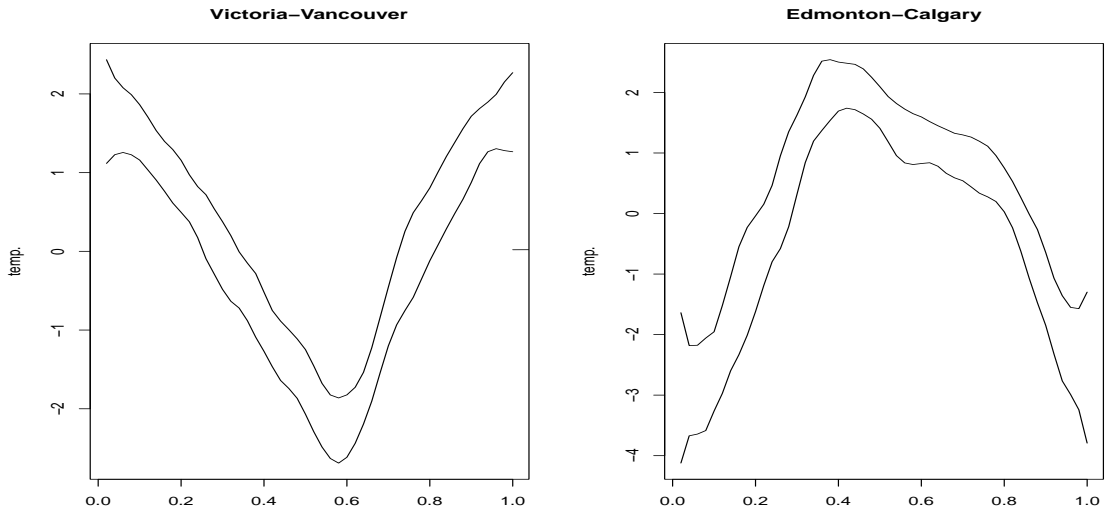}
	\vspace{-1cm}
	\caption{{\it 95\% JSCB for the mean yearly temperature differences between Victoria and Vancouver (left panel) and Edmonton and Calgary  (right panel).
		\label{Figure5}}
	}
	\end{figure}	

\bigskip

{\bf Acknowledgements} This work has been supported in part by the
Collaborative Research Center ``Statistical modeling of nonlinear
dynamic processes'' (SFB 823, Teilprojekt A1,C1) of the German Research Foundation
(DFG) and NSERC of Canada (fund number : 489079). We gratefully acknowledge Professors  Alois Kneip, Dominik Liebl and  Hans Georg M\"uller for some helpful discussions on  related literature.

\newpage
~~\\
\newpage

\centerline{\large\bf Supplemental Material for ``Statistical Inference for High Dimensional}
\centerline{\large\bf  Panel Functional Time Series''}
	\centerline{\sc Zhou Zhou
		and Holger Dette}
		
	\centerline{\sc \small University of Toronto and Ruhr-Universit\"at Bochum  }

\bigskip
\font\n=cmcsc10

\centerline{\today}

\begin{abstract}
This supplemental material contains proofs of the theoretical results of the paper.

\end{abstract}

In this supplemental material, the symbol $C$ denotes a positive finite constant which may vary from place to place. 


\subsection*{Proof of the results in Section \ref{sec2}} 

\begin{proof}[\bf Proof of Proposition \ref{prop:1}]
Using integration by parts, we have  for the Fourier coefficients in (\ref{fcoeff}) and $k\ge 1$,
\begin{eqnarray*}
a_{i,k}
=\frac{2}{(k\pi)^2}\Big(-\int_{0}^1X_i''(u)\cos(k\pi u)\,du+X_i'(1)\cos(k\pi)-X_i'(0)\Big).
\end{eqnarray*}
Note that $\|X_i'(1)\|_q\le\|X_i'(0)\|_q+\int_{0}^1\|X_i''(u)\|_q\,du<\infty$. As a result $\|a_{i,k}\|_q\le C/k^2$.  Hence $\|a^*_{i,k}\|_q\le C/k^2$ as $a_{i,k}$ and $a^*_{i,k}$ are identically distributed. On the other hand,
\begin{eqnarray*}
\|a_{i,k}-a^*_{i,k}\|_q&\le& \int_{0}^1\|H(u,\FF_i)-H(u,\FF^*_i)\|_q|\cos(k\pi u)|\,du\cr
&\le& \sup_{u\in[0,1]}\|H(u,\FF_i)-H(u,\FF^*_i)\|_q=\delta_X(i,q),
\end{eqnarray*}
and the proposition follows.
\end{proof}

\begin{proof}[\bf Proof of Proposition \ref{prop:2}]
Let $A_{n,k}=\sum_{i=1}^na_{i,k}/\sqrt{n}$.
 Observe that $\E (a_{j,k})=0$. For any $j
 \in{\mathbb Z}$ and a random variable $Z$, let 
 $${\cal P}_j(Z)=\E(Z|\FF_j)-\E(Z|\FF_{j-1}).$$ 
 We have
\begin{eqnarray*}
\|A_{n,j}\|_q&\le& C \sum_{l=0}^\infty \|{\cal P}_0(a_{l,j})\|_q\cr 
&\le& C \sum_{l=0}^\infty \|a_{l,j}-a^*_{l,j}\|_q
\le C \Big [\sum_{l=0}^{\lf j^{2/\beta}\rf}j^{-2}+ \sum_{l=\lf j^{2/\beta}\rf+1}^{\infty}l^{-\beta}\Big ]\le Cj^{2(1-\beta)/\beta}
\end{eqnarray*}
by Proposition \ref{prop:1}, Theorem 1 (i), (ii) of \cite{wu2005nonlinear} and Theorem 1(iii) of \cite{wu2007}, where we 
utilized the facts that $q\ge 2$ and $\sum_{l=0}^\infty \|a_{l,j}-a^*_{l,j}\|_q<\infty$. Observing the representation
$$
S_n(u)-S_{n}(k,u)=\sum_{j=k+1}^\infty A_{n,j}\cos(j\pi u)
$$
 we obtain
 $$
\Big  \|\sup_{u\in[0,1]}|S_n(u)-S_{n}(k,u)|  \Big \|_q\le \sum_{j=k+1}^\infty \|A_{n,j}\|_q\le C\sum_{j=k+1}^\infty j^{2(1-\beta)/\beta}\le Ck^{(2-\beta)/\beta},
 $$
and the assertion of the proposition follows.
\end{proof}

\begin{proof}[\bf  Proof of Proposition \ref{prop:3}]
Observe that $S_{n}(k,u)-S_{n}(k,t_{i,n})=\int_{t_{i,n}}^uS_n'(k,s)\,ds$, where $S_n'(k,s)$ is the derivative of $S_n(k,s)$ with respect to $s$. Therefore
$$\sup_{u\in [t_{i,n},t_{i+1,n}]}|S_{n}(k,u)-S_{n}(k,t_{i,n})|\le \int_{t_{i,n}}^{t_{i+1,n}}|S_{n}'(k,s)|\,ds,$$
which implies
\begin{eqnarray}\label{prop3eq1}
\Big \|\sup_{u\in [t_{i,n},t_{i+1,n}]}|S_{n}(k,u)-S_{n}(k,t_{i,n})| \Big \|_q\le \int_{t_{i,n}}^{t_{i+1,n}}\|S_{n}'(k,s)\|_q\,ds.
\end{eqnarray}
Now for any $s\in [0,1]$, $-S_n'(k,s)=\pi \sum_{j=1}^k A_{n,j}j\sin(j\pi s)$. By the same calculations as those in the proof of Proposition \ref{prop:2}  we have
\begin{eqnarray}\label{prop3eq2}
\|S_{n}'(k,s)\|_q\le \sum_{j=1}^k j\|A_{n,j}\|_q\le C\sum_{j=1}^kj \cdot j^{2(1-\beta)/\beta}\le Ck^{2/\beta}.
\end{eqnarray}
Combining (\ref{prop3eq1}) and (\ref{prop3eq2}), we obtain that
$$ \Big \|\sup_{u\in [t_{i,n},t_{i+1,n}]}|S_{n}(k,u)-S_{n}(k,t_{i,n})| \Big \|_q\le  Ck^{2/\beta}/l_n$$
for any $i$.
Hence
$$
\Big \|\max_{0\le i \le l_n-1}\sup_{u\in [t_{i,n},t_{i+1,n}]}|S_{n}(k,u)-S_{n}(k,t_{i,n})| \Big \|_q\le  Ck^{2/\beta}/l^{1-1/q}_n
$$
by an $L^q$ maximum inequality.
\end{proof}

\subsection*{Proof of the results in Section \ref{sec3}} 

Recall the definition (\ref{snvec}) and define
$$
  \bS_{n}(k,u)= \frac {1}{\sqrt{n}} \sum_{i=1}^n\bX_i(k,u) ,
$$
  where
  $$
  \bX_i(k,u)=\sum_{j=1}^k \ba_{i,j}\cos(j\pi u)
  $$
  and $\ba_{i,j}=2\int_{0}^1\bX_i(u)\cos(j\pi u)\,du$ for $j\ge 1$; $\ba_{i,0}=\int_{0}^1\bX_i(u)\,du$. Let
  \begin{equation}\label{snveck}
  \bS^N_{n}(k,u)=\sum_{j=0}^k\bN_{n,j}\cos(j\pi u)
  \end{equation}
  and denote the $j$-th entry of $\bS_{n}(k,u)$ (resp. $\bS^N_{i}(k,u)$) by  $S_{n,j}(k,u)$ (resp. $S^N_{n,j}(k,u)$).

\begin{lemma}\label{lem:2}
Denote the dependence measure of $\{X_j(k,\cdot)\}_{j \in \mathbb{Z}}$ by $\delta_{H,k}(i,q)$. If $\delta_H(i,q)=O(i^{-\beta})$, then
$$\delta_{H,k}(i,q)=O(i^{-\beta/2+\epsilon})\quad\mbox{for any } \epsilon>0.$$
\end{lemma}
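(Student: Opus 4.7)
The plan is to reduce the difference of the truncated processes to a sum of differences of Fourier coefficients and then invoke Proposition \ref{prop:1}. Concretely, since the coupling $\eta_0\mapsto\eta_0^*$ acts simultaneously on every coordinate of the filter, the ``starred'' version of $X_i(k,u)$ has Fourier coefficients $a^*_{i,l}=G_l(\FF_i^*)$, so
\[
X_i(k,u)-X^*_i(k,u)=\sum_{l=0}^{k}(a_{i,l}-a^*_{i,l})\cos(l\pi u).
\]
Bounding $|\cos|\le 1$ and then taking the $L^q$ norm under the supremum gives
\[
\delta_{H,k}(i,q)=\sup_{u\in[0,1]}\|X_i(k,u)-X^*_i(k,u)\|_q\le \sum_{l=0}^{k}\|a_{i,l}-a^*_{i,l}\|_q.
\]

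Next I would apply Proposition \ref{prop:1}, which yields $\|a_{i,l}-a^*_{i,l}\|_q\le C\min(\delta_H(i,q),l^{-2})\le C\min(i^{-\beta},l^{-2})$. Summing over $l$ thus reduces the problem to estimating $\sum_{l=0}^{\infty}\min(i^{-\beta},l^{-2})$, where the cutoff between the two competing bounds occurs at $l\asymp i^{\beta/2}$.

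To squeeze the sum into the desired polynomial rate, I would use the elementary interpolation $\min(A,B)\le A^{\alpha}B^{1-\alpha}$ valid for any $\alpha\in[0,1]$, yielding
\[
\sum_{l=1}^{\infty}\min(i^{-\beta},l^{-2})\le i^{-\beta\alpha}\sum_{l=1}^{\infty}l^{-2(1-\alpha)}.
\]
The tail sum converges provided $2(1-\alpha)>1$, i.e.\ $\alpha<1/2$. For any prescribed $\epsilon>0$, choosing $\alpha=1/2-\epsilon/\beta$ (so $\alpha<1/2$) gives a convergent sum and bound $O(i^{-\beta/2+\epsilon})$. Combining this with the previous display gives $\delta_{H,k}(i,q)=O(i^{-\beta/2+\epsilon})$ uniformly in $k$, which is the claim.

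The only potential obstacle is making sure the bound is truly uniform in $k$: but since our final majorant is $\sum_{l=1}^{\infty}$, it dominates the partial sum $\sum_{l=0}^{k}$ regardless of $k$, so no issue arises. The step that one might be tempted to do naively (direct casework at $l\asymp i^{\beta/2}$) actually gives the sharper $O(i^{-\beta/2})$ without the $\epsilon$, but the Hölder-interpolation route makes the proof cleaner and matches the stated conclusion with a small slack.
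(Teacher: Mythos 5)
Your proposal is correct and follows essentially the same route as the paper: bound $\delta_{H,k}(i,q)$ by $\sum_{l\le k}\|a_{i,l}-a^*_{i,l}\|_q$, invoke Proposition \ref{prop:1} for $\|a_{i,l}-a^*_{i,l}\|_q\le C\min(i^{-\beta},l^{-2})$, and interpolate the minimum (the paper writes $\min(l^{-2},i^{-\beta})\le l^{-2(1/2+\alpha)}i^{-\beta(1/2-\alpha)}$, which is your H\"older-type bound in a slightly different parametrization) to obtain a convergent series uniformly in $k$ and the rate $i^{-\beta/2+\epsilon}$. Your closing remark that direct casework at the cutoff $l\asymp i^{\beta/2}$ would even give $O(i^{-\beta/2})$ is also accurate, though neither you nor the paper needs that sharper rate.
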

\begin{proof}
By definition, we have
$$\delta_{H,k}(i,q)=\Big \|\sum_{j=0}^k(a_{i,j}-a_{i,j}^*)\cos(j\pi u)\Big \|_q,$$
and Proposition \ref{prop:1} implies that $\|a_{i,j}-a_{i,j}^*\|_q \le C\min(j^{-2},i^{-\beta})\le Cj^{-2 (1/2 + \alpha)}i^{-\beta (1/2 - \alpha)}$  for an arbitrarily small $\alpha>0$. We obtain
\begin{eqnarray*}
\delta_{H,k}(i,q)\le \sum_{j=0}^k\|a_{i,j}-a_{i,j}^*\|_q\le \sum_{j=0}^kCj^{-2(1/2 + \alpha)}i^{-\beta (1/2 - \alpha)}\le Ci^{-\beta/2+\alpha\beta}.
\end{eqnarray*}
As $\alpha$ can be made arbitrarily small, the lemma follows.
\end{proof}

\begin{lemma}\label{lem:1}
Under the assumptions of Theorem \ref{thm:1}, we have
\begin{eqnarray*}
\sup_{x\in \R} \Big |\p  \Big [\max_{1\le j\le r}\max_{1\le i\le l_n}|S_{n,j}(k,t_{i,n})|\le x  \Big ]-\p  \Big [\max_{1\le j\le r}\max_{1\le i\le l_n}|S^N_{n,j}(k,t_{i,n})|\le x  \Big  ] \Big |\rightarrow 0.
\end{eqnarray*}
\end{lemma}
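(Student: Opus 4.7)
\textbf{Proof plan for Lemma \ref{lem:1}.}

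The natural strategy is to recast the statement as a Gaussian approximation for the $\ell^\infty$ norm of a partial sum of weakly dependent, high-dimensional random vectors. First I would stack the scalar entries into a single vector of length $p := r \, l_n$: define the $p$-dimensional stationary time series $\bZ_t$ whose entries are $X_{t,j}(k, t_{i,n})$ indexed by $(j,i) \in \{1,\ldots,r\}\times\{1,\ldots,l_n\}$, so that the centred partial sum $\tfrac{1}{\sqrt{n}}\sum_{t=1}^n \bZ_t$ has components precisely $S_{n,j}(k,t_{i,n})$. Since $X_{t,j}(k,u) = H_{j,k}(u, \FF_t)$ for a truncated filter $H_{j,k}$, the vector process $\{\bZ_t\}$ admits a physical representation to which the general machinery of Section \ref{sec3} applies.

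Second, I would invoke a high-dimensional Gaussian approximation theorem for maxima of partial sums of stationary vector time series under physical/functional dependence, in the spirit of Chernozhukov, Chetverikov and Kato and its extensions to dependent data. The relevant statement provides a bound on the Kolmogorov distance between $\|\tfrac{1}{\sqrt n}\sum_t \bZ_t\|_\infty$ and $\|\bS^N_n(k,t_{\cdot,n})\|_\infty$ that vanishes provided (i) $\|\bZ_{t,l}\|_q<\infty$ for $q\ge 4$; (ii) the physical dependence measure of $\{\bZ_t\}$ decays polynomially at a prescribed rate; (iii) the coordinate-wise variance of the normalised sum is bounded below by a positive constant; and (iv) $\log p = o(n^{c})$ for an explicit exponent $c = c(\beta,q)$.

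Third, I would verify (i)--(iv) directly. Uniform moment bounds in (i) follow from Proposition \ref{prop:1}, since $\|X_{t,j}(k,u)\|_q \le \sum_{j'=0}^k\|a_{t,j'}\|_q \le C$ uniformly in $u$ and $k$. Condition (ii) is the content of Lemma \ref{lem:2}, which upgrades the decay $\delta_{H_j}(i,q)=O(i^{-\beta})$ from (\ref{ass3a}) into $\delta_{H_j,k}(i,q)=O(i^{-\beta/2+\epsilon})$ for the truncated process; this remains summable and has a polynomial rate strictly greater than $1$ because $\beta > 3$. Condition (iii) follows from assumption (\ref{ass3}) together with Proposition \ref{prop:2}, which shows that truncation of the Fourier expansion at level $k$ only perturbs the variances of $S_{n,j}(u)$ by a quantity that vanishes as $k\to\infty$. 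Finally, (iv) is handled by combining $r \asymp n^{\theta_1}$ with a polynomial choice of the grid size $l_n \asymp n^{\gamma}$ and checking that $\theta_1+\gamma$ stays below the exponent $c(\beta,q)$ tolerated by the Gaussian approximation theorem.

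The principal obstacle is the interaction between the truncation parameter $k$, the grid size $l_n$ and the spatial dimension $r$ in step (iv). Because Lemma \ref{lem:2} weakens the dependence rate from $i^{-\beta}$ to roughly $i^{-\beta/2}$, the admissible polynomial growth of $p = r l_n$ is narrower than the raw exponent suggested by $\beta$; this is exactly what forces the form of the threshold $f(\beta,q)$ in (\ref{ass4}). The delicate bookkeeping is to select $l_n$ large enough for the chaining bound of Proposition \ref{prop:3} to shrink, small enough for the high-dimensional Gaussian approximation to apply, while leaving the Fourier truncation error of Proposition \ref{prop:2} under control, so that Lemma \ref{lem:1} can subsequently be combined with Propositions \ref{prop:2} and \ref{prop:3} to prove Theorem \ref{thm:1}.
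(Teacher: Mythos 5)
Your overall route is the paper's route: stack the truncated process at the grid points into a single vector of dimension $p=rl_n$, apply the high-dimensional Gaussian approximation for physically dependent time series (the paper uses part (i) of Theorem 3.2 of \cite{zhangwu2017}), feed in the weakened decay rate $O(i^{-\beta/2+\epsilon})$ from Lemma \ref{lem:2}, obtain the variance lower bound from (\ref{ass3}) combined with Proposition \ref{prop:2}, and do the exponent bookkeeping $r\asymp n^{\theta_1}$, $l_n\asymp n^{\theta_2}$, $k\asymp n^{\theta_3}$ allowed by (\ref{ass4}).

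There is, however, one step you assert that no such theorem delivers directly, and it is the only genuinely nontrivial piece you omit. You claim the Gaussian approximation bounds the Kolmogorov distance between $\max_{j,i}|S_{n,j}(k,t_{i,n})|$ and $\max_{j,i}|S^N_{n,j}(k,t_{i,n})|$; what the theorem actually gives is a comparison of $|\tilde\bS_{k,n}|_\infty$ with $|\Delta^{1/2}\mathbf{G}|_\infty$, where $\tilde\bS_{k,n}$ is the stacked normalized sum, $\Delta$ is the \emph{long-run} covariance matrix of the stacked truncated process and $\mathbf{G}$ is a standard Gaussian vector. The target of the lemma is the Gaussian vector built from $\{\bN_{n,k}\}$ in (\ref{eq:norm}) and (\ref{snveck}), whose covariance at the grid points is the finite-$n$ covariance $\Delta_n=\E[\tilde\bS_{k,n}\tilde\bS_{k,n}^\top]=\sum_{j=-n}^{n}(1-|j|/n)\E(\tilde\bX_{i,k}\tilde\bX_{i-j,k}^\top)$, not $\Delta$. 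Closing this gap requires (a) an entrywise bound $\max_{a,b}|\Delta_n(a,b)-\Delta(a,b)|=O(n^{-\beta/2+1+\epsilon})$, which follows from the covariance decay implied by Lemma \ref{lem:2} (the paper invokes Lemma 6 of \cite{zhou2014}), and (b) the Gaussian comparison inequality of \cite{chechekato2013} (Lemma 3.1), which converts this covariance discrepancy into a Kolmogorov-distance bound of order $n^{(-\beta/2+1+\epsilon)/3}$ up to a logarithmic factor in $rl_n$; the assumption $\beta>3$ is what makes this term vanish. Without this Gaussian-to-Gaussian comparison your chain terminates at a Gaussian with the wrong covariance and the lemma as stated is not proved. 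A minor additional correction: the dimension condition for the dependent-data Gaussian approximation is not of the form $\log p=o(n^{c})$ (that would permit sub-exponentially many coordinates, which polynomially decaying dependence with finite $q$-th moments cannot support); it is the polynomial constraint $\theta_1+\theta_2<q/2-1$ together with the dependence exponent from Lemma \ref{lem:2}, and this is exactly where (\ref{ass4}) enters, consistent with the bookkeeping you sketch at the end.
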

\begin{proof}
Let
\begin{equation}\label{h1}
  \tilde \bX_{i,k} =(\bX^\top_{i}(k,t_{1,n}),\cdots,\bX^\top_{i}(k,t_{l_n,n}))^\top
     \end{equation}
     denote the   vector of length $rl_n$ in which the $l$-th block contains the coordinates of the vector ${\bf X}_i(k,t_{l,n})$ and define  $$\tilde{\bS}_{k,n}=\frac {1}{\sqrt{n}}\sum_{j=1}^n\tilde\bX_{i,k} $$ as the corresponding standardized sum.\\
Finally, recall the notation of $\bS^N_n(k,\cdot)$ in (\ref{snveck}),   define $$\tilde{\bS}^N_{k,n}=\big([\bS^N_{n}(k,t_{1,n})]^\top,\cdots,[\bS^N_{n}(k,t_{l_n,n})]^\top\big)^\top$$ and note that $\tilde{\bS}_{k,n}$ and $\tilde{\bS}^N_{k,n}$ share the same covariance structure.  By Proposition \ref{prop:2}, we have
$\|S_{n,j}(u)-S_{n,j}(k,u)\|_2\rightarrow 0$ uniformly in $j$ and $u$ if $k\rightarrow\infty$. Observing assumption (\ref{ass3}) we conclude that, uniformly in $j$ and $u$, $\E(S^2_{n,j}(k,u))\ge \delta/2$ for sufficiently large $n$. Now pick $l_n\asymp n^{\theta_2}$ and $k\asymp n^{\theta_3}$ with positive constants $\theta_2$ and $\theta_3$. By   assumption (\ref{ass4}), we can choose $\theta_2$ and $\theta_3$ such that
\begin{eqnarray*}
\left\{\begin{array}{l}\theta_1+\theta_2<q/2-1,\\ \theta_1/q+[(2-\beta)/\beta]\theta_3<0,\\ \theta_1/q+(1/q-1)\theta_2+(2/\beta)\theta_3<0.\end{array}\right.
\end{eqnarray*}

Let  $\Delta=\sum_{j=-\infty}^\infty\E(\tilde\bX_{i,k}\tilde\bX^\top_{i-j,k})$ denote the long-run-covariance of the time series $\{ \tilde{\bf X}_{i,k}\}_{i \in \mathbb{Z}}$ defined by (\ref{h1}). 
Then as $\theta_1+\theta_2<q/2-1$ and $\max_{1\le k\le r}\delta_{{H_j},k}(i,q)=O(i^{-\beta/2+\epsilon})$ 
for an arbitrarily small positive $\epsilon$ (see Lemma \ref{lem:2}), we can easily check that the assumptions of part (i) of Theorem 3.2 of \cite{zhangwu2017}  hold (define $\alpha$ therein
as  $\beta/2-1-\epsilon$ and note that $\alpha>1/2-1/q$ since $\beta>3$). To provide more details, we denote by $X_{\cdot j}$ the $j$-th component process of $\tilde \bX_{i,k}$, $j=1,2,\cdots, rl_n,$ and   
check the magnitudes of the quantities 
 $\delta_{i,q,j}$ $\Delta_{m,q,j}$, $\|X_{\cdot j}\|_{q,\alpha}$, $\Psi_{q,\alpha}$, $\Upsilon_{q,\alpha}$, $\omega_{i,q}$, $\Omega_{m,q}$, $\||X_{\cdot}|_{\infty}\|_{q,\alpha}$, $\Theta_{q,\alpha}$, $L_1$, $L_2$, $N_1$, $N_2$, $W_1$ and $W_2$ defined in \cite{zhangwu2017}.  By Lemma \ref{lem:2}, $\delta_{i,q,j}=O(i^{-\beta/2+\epsilon})$ for any $j$ which implies  $\Delta_{m,q,j}=O(m^{-\beta/2+1+\epsilon})$.
Elementary but tedious calculations using the assumptions of Theorem \ref{thm:1} yield that 
$$
\|X_{\cdot j}\|_{q,\alpha}=O(1)~,~~\Psi_{q,\alpha}=O(1)~, ~~\Upsilon_{q,\alpha}=O(p^{1/q})
$$
 (note that here $p=O(n^{\theta_1+\theta_2})$), 
 \begin{eqnarray*} 
 \omega_{i,q}&=& O(p^{1/q}i^{-\beta/2+\epsilon})~, ~\Omega_{m,q}=O(p^{1/q}m^{-\beta/2+1+\epsilon})~, ~
  \Theta_{q,\alpha}=O(p^{1/q})~, \\
L_1 &= & o(1)~, ~L_2=O(\log^{2/\alpha} p)~,~   \||X_{\cdot}|_{\infty}\|_{q,\alpha}=O(p^{1/q}) ~,  \\
W_1&=&O(\log^7(pn))~, ~W_2=O(\log^4(pn)),
 \end{eqnarray*}
 and  $N_1\ge c(n/\log p)^{q/2}/p$, $N_2\ge cn/\log^2 p$, where $c$ is a positive constant. Therefore the assumptions of part (i) of Theorem 3.2 of \cite{zhangwu2017}  hold provided that $\theta_1+\theta_2<q/2-1$ and $\epsilon$ is sufficiently small. Thus we obtain
\begin{eqnarray} \label{hd1}
\sup_{x\in \R}\Big |\p[|\tilde{\bS}_{k,n}|_\infty\le x]-\p[|\Delta^{1/2}{\bf G}|_\infty\le x] \Big |\rightarrow 0,
\end{eqnarray}
where $| \cdot |_\infty$ denotes the maximum norm on $\mathbb{R}^{rl_n}$ and ${\bf G}$ is a standard $rl_n$ dimensional Gaussian vector. In fact, Theorem 3.2 in \cite{zhangwu2017}  performs a normalization and it requires that the long-run variance of each $S_{n,j}(k,t_{i,n})$ equals 1, $j=1,2,\cdots, r$, $i=1,2,\cdots, l_n$. But their arguments easily apply to the case of arbitrary marginal long-run variances with a positive lower bound.  Now observe that
\begin{eqnarray*}
\Delta_n:=\E([\tilde{\bS}_{k,n}\tilde{\bS}^\top_{k,n}])=\sum_{j=-n}^n(1-|j|/n)\E(\tilde\bX_{i,k}\tilde\bX^\top_{i-j,k}),
\end{eqnarray*}
and that Lemma 6 of \cite{zhou2014}  and Lemma \ref{lem:2} yield
\begin{eqnarray*}
|\E[X_{i,p,k}X_{i-j,s,k}]|\le Cj^{-\beta/2+\epsilon},\quad 1\le p\le rl_n, 1\le s\le rl_n
\end{eqnarray*}
for any $\epsilon>0$, where $X_{i,p,k}$ is the $p$-th entry of $\tilde\bX_{i,k}$. Hence simple calculations yield for the entries of the matrices $\Delta_n$ and $\Delta$ that
$$\max_{1\le i,j\le lr_n}|\Delta_n(i,j)-\Delta(i,j)|=O(n^{-\beta/2+\epsilon+1}).$$  By the Gaussian comparison inequality in Lemma 3.1 of \cite{chechekato2013}, we have
\begin{eqnarray*}
\sup_{x\in \R} \Big |\p[|\tilde{\bS}^N_{k,n}|_\infty\le x]-\p[|\Delta_n^{1/2}{\bf G}|_\infty\le x] \Big |\le C(n^{(-\beta/2+\epsilon+1)/3}[\log (rl_n / n^{-\beta/2+\epsilon+1} ]^{2/3}\rightarrow 0,
\end{eqnarray*}
which implies (observing (\ref{hd1}))
$$\sup_{x\in \R}|\p[|\tilde{\bS}_{k,n}|_\infty\le x]-\p[\tilde{\bS}^N_{k,n}|_\infty\le x]|\rightarrow 0.$$
And this is exactly the claim of the lemma.
\end{proof}

\begin{proof}[\bf Proof of Theorem \ref{thm:1}]
 Proposition \ref{prop:2} and an application of an $L^q$ maximal inequality yield
\begin{eqnarray*}
\Big \|\max_{1\le j\le r}\sup_{0\le u\le 1}|S_{n,j}(u)-S_{n,j}(k,u)| \Big \|_q\le Cr^{1/q}k^{(2-\beta)/\beta}.
\end{eqnarray*}
Since $\theta_1/q+[(2-\beta)/\beta]\theta_3<0$, the right hand side of the above inequality converges to $0$ faster than $n^{-\alpha_1}$ for some positive $\alpha_1$. By the triangular and Markov inequalities, for $\delta_{1,n}:=\sqrt{r^{1/q}k^{(2-\beta)/\beta}}$, we have
\begin{eqnarray*}
\p\Big(\max_{1\le j\le r}\sup_{0\le u\le 1}|S_{n,j}(u)|\le x \Big)&\le& \p \Big (\max_{1\le j\le r}\sup_{0\le u\le 1}|S_{n,j}(k,u)|\le x+\delta_{1,n}\Big)\cr
&+&\p\Big(\max_{1\le j\le r}\sup_{0\le u\le 1}|S_{n,j}(u)-S_{n,j}(k,u)|\ge \delta_{1,n}\Big)\cr
&\le& \p\Big(\max_{1\le j\le r}\sup_{0\le u\le 1}|S_{n,j}(k,u)|\le x+\delta_{1,n}\Big)+\epsilon_{1,n},
\end{eqnarray*}
where $\epsilon_{1,n}= Crk^{q(2-\beta)/\beta}/\delta^q_{1,n}\rightarrow 0$ with some polynomial rate. Similarly, since $\theta_1/q+(1/q-1)\theta_2+(2/\beta)\theta_3<0$, it follows that $r^{1/q}k^{2/\beta}/l^{1-1/q}_n$ converges to $0$ faster than $n^{-\alpha_2}$ for some positive $\alpha_2$. By Proposition \ref{prop:3}, we have, for $\delta_{2,n}:=\sqrt{r^{1/q}k^{2/\beta}/l^{1-1/q}_n}$,
\begin{eqnarray*}
\p\Big(\max_{1\le j\le r}\sup_{0\le u\le 1}|S_{n,j}(k,u)|\le x+\delta_{1,n}\Big)&\le&\p\Big(\max_{1\le j\le r}\sup_{1\le i\le l_n}|S_{n,j}(k,t_{i,n})|\le x+\delta_{1,n}+\delta_{2,n}\Big)+\epsilon_{2,n},
\end{eqnarray*}
where $\epsilon_{2,n}=C\{\delta_{2,n}/[r^{1/q}k^{2/\beta}/l^{1-1/q}_n]\}^{-q}\rightarrow 0$ with some polynomial rate. Now by Lemma \ref{lem:1}, we obtain
\begin{eqnarray*}
\p\Big (\max_{1\le j\le r}\sup_{1\le i\le l_n}|S_{n,j}(k,t_{i,n})|\le x+\delta_{1,n}+\delta_{2,n}\Big )-\p\Big(\max_{1\le j\le r}\sup_{1\le i\le l_n}|S^N_{n,j}(k,t_{i,n})|\le x+\delta_{1,n}+\delta_{2,n})\rightarrow 0
\end{eqnarray*}
uniformly in $x$. Denote the latter uniform convergence rate by $\epsilon_{3,n}$. Furthermore, by Nazarov's anti-concentration inequality (\cite{nazarov2003}),
\begin{eqnarray*}
\p\Big(\max_{1\le j\le r}\sup_{1\le i\le l_n}|S^N_{n,j}(k,t_{i,n})|\le x+\delta_{1,n}+\delta_{2,n}\Big)&-&\p\Big(\max_{1\le j\le r}\sup_{1\le i\le l_n}|S^N_{n,j}(k,t_{1,n})|\le x\Big)\cr
&\le& C(\delta_{1,n}+\delta_{2,n})\sqrt{\log(rl_n)}.
\end{eqnarray*}
Following the same arguments as above, we have
\begin{eqnarray*}
\p\Big(\max_{1\le j\le r}\sup_{1\le i\le l_n}|S^N_{n,j}(k,t_{i,n})|\le x\Big)&-& \p\Big(\max_{1\le j\le r}\sup_{0\le u\le 1}|S^N_{n,j}(u)|\le x\Big)\cr
&\le& \epsilon_{1,n}+\epsilon_{2,n}+ C(\delta_{1,n}+\delta_{2,n})\sqrt{\log(rl_n)}.
\end{eqnarray*}
Therefore we conclude that
\begin{eqnarray*}
\p\Big[\max_{1\le j\le r}\sup_{0\le u\le 1}|S_{n,j}(u)|\le x\Big]&-&\p\Big[\max_{1\le j\le r}\sup_{0\le u\le 1}|S^N_{n,j}(u)|\le x\Big]\cr
&\le& C[ \epsilon_{1,n}+\epsilon_{2,n}+ \epsilon_{3,n}+(\delta_{1,n}+\delta_{2,n})\sqrt{\log(rl_n)}]\rightarrow 0
\end{eqnarray*}
uniformly in $x$. Similarly, we also obtain the lower bound
\begin{eqnarray*}
\p\Big[\max_{1\le j\le r}\sup_{0\le u\le 1}|S_{n,j}(u)|\le x\Big]&-&\p\Big[\max_{1\le j\le r}\sup_{0\le u\le 1}|S^N_{n,j}(u)|\le x\Big]\cr
&\ge& -C[ \epsilon_{1,n}+\epsilon_{2,n}+\epsilon_{3,n}+ (\delta_{1,n}+\delta_{2,n})\sqrt{\log(rl_n)}]\rightarrow 0
\end{eqnarray*}
uniformly in $x$. Hence we conclude that Theorem \ref{thm:1} holds.
\end{proof}

Recall the definition of the vector   $\bT_{i,m}(u)$ in (\ref{Tu}) and
denote its $j$-th entry  by $T_{i,j,m}(u)$. Similarly define $\Phi_{m,j}(u)$ as the $j$-th component of the vector $\mathbf{\Phi}_m$ in (\ref{phim}).
\begin{lemma}\label{lem:3}
Under the assumptions of Theorem \ref{thm:2}, there exists a sequence of events $E_{1,n}$ such that $\p(E_{1,n})\rightarrow 1$. On $E_{1,n}$, we have
\begin{eqnarray*}
\sup_{x\in\R}\Big|\p\Big(\max_{1\le j\le r}\sup_{1\le i\le l_n}|\Phi_{m,j}(t_{i,n})|\le x\Big|\{\bX_i(u)\}_{i=1}^n\Big)-\p\Big(\max_{1\le j\le r}\sup_{1\le i\le l_n}|S^N_{n,j}(t_{i,n})|\le x\Big)\Big|\rightarrow 0.
\end{eqnarray*}
\end{lemma}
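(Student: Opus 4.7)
The plan is to exploit the fact that, conditional on the sample $\{\bX_i\}_{i=1}^n$, the bootstrap statistic $\bPhi_m(u)$ is a centered Gaussian process in $u$ (for each fixed $u$ it is a linear combination of i.i.d.\ $N(0,1)$ multipliers $N_i$ with data-dependent coefficients). Restricted to the grid ${\cal U}_n$ and the $r$ spatial coordinates, it becomes a centered $rl_n$-dimensional Gaussian vector, as does $\bigl(S^N_{n,j}(t_{i,n})\bigr)_{j,i}$. The comparison of the two sup-norms therefore reduces to a high-dimensional Gaussian-vs-Gaussian comparison, and I would apply Lemma~3.1 of Chernozhukov, Chetverikov and Kato (2013) (as used already in the proof of Lemma~\ref{lem:1}), which bounds the Kolmogorov distance between maxima of two centered Gaussian vectors of dimension $p=rl_n$ by $C\bigl(\Delta_{\mathrm{cov}}^{1/3}\log^{2/3}(p)\bigr)$, where $\Delta_{\mathrm{cov}}$ is the entry-wise maximum discrepancy of their covariance matrices.

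The substantive step is therefore to show that on a sequence of events $E_{1,n}$ of probability tending to one, the conditional covariance
\[
\widehat{\Delta}\bigl((j_1,i_1),(j_2,i_2)\bigr)=\frac{m}{n-m}\sum_{s=1}^{n-m}\bigl[T_{s,j_1,m}(t_{i_1,n})-S_{n,j_1}(t_{i_1,n})/\sqrt{n}\bigr]\bigl[T_{s,j_2,m}(t_{i_2,n})-S_{n,j_2}(t_{i_2,n})/\sqrt{n}\bigr]
\]
approximates, uniformly in $(j_1,i_1,j_2,i_2)\in(\{1,\ldots,r\}\times\{1,\ldots,l_n\})^2$, the long-run covariance
$\Delta\bigl((j_1,i_1),(j_2,i_2)\bigr)=\sum_{k\in\Z}\mathrm{Cov}\bigl(X_{0,j_1}(t_{i_1,n}),X_{k,j_2}(t_{i_2,n})\bigr)$ at a rate fast enough that $\Delta_{\mathrm{cov}}^{1/3}\log^{2/3}(rl_n)\to 0$. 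I would decompose $\widehat\Delta-\Delta$ in the classical moving-block-bootstrap fashion into a bias piece and a variance piece: the bias contributes $O(m^{-\beta+1})$ thanks to $\sum_k\|\mathrm{Cov}(X_0(u),X_k(v))\|$ decaying with the physical dependence measure bound~(\ref{ass6}), while the variance piece, after centering, is controlled by computing its $L^{q/2}$ norm via the standard dependent-data moment inequalities (Theorem~1 of Wu 2005) and Markov's inequality, yielding a rate of the form $m/\sqrt{n-m}$ inflated by the $(rl_n)^{2/q}$ maximal cost. The centering-at-$S_n/\sqrt n$ correction is an $O_p(1/\sqrt n)$ remainder that can be absorbed.

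Concretely, I would set $\phi'=-\max\{(\phi-1)/2,-\phi\}$ as in Theorem~\ref{thm:2} and choose the grid exponent $\theta_2$ satisfying the same inequalities imposed in Lemma~\ref{lem:1}, together with the new constraint $\theta_1<\phi' q^2/[2(q+1)]$; a short calculation shows these together guarantee $(rl_n)^{2/q}\cdot m^{-\beta+1}$ and $(rl_n)^{2/q}\cdot m/\sqrt{n-m}$ are $o(\log^{-2}(rl_n))$, which is precisely what is needed for the CCK bound to be $o(1)$. Define $E_{1,n}$ to be the event on which $\max_{(j_1,i_1),(j_2,i_2)}|\widehat\Delta-\Delta|$ is bounded by this small rate; a Markov argument on the $L^{q/2}$ bound shows $\p(E_{1,n})\to 1$.

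The principal obstacle will be the uniform control of the variance piece of $\widehat\Delta-\Delta$ over the quartic index set of size $(rl_n)^2$: one has to combine (i) Proposition~\ref{prop:2}/Proposition~\ref{prop:3} together with the extra Lipschitz smoothness~(\ref{eq:2lip}) and the dependence decay on the derivative~(\ref{ass6}), and (ii) a careful application of Wu's $L^{q/2}$ maximum inequality for physically dependent sums, precisely in the spirit of the proofs in \cite{zhangwu2017}. Once this uniform covariance consistency is in place, the conclusion of Lemma~\ref{lem:3} follows by one application of the CCK Gaussian comparison inequality conditional on $E_{1,n}$, together with the tower property to remove the conditioning.
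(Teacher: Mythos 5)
Your overall route is the same as the paper's: reduce to the $rl_n$-dimensional vector on the grid, show that the conditional covariance of $\bPhi_m$ matches the covariance of the comparison Gaussian vector uniformly over all entries via an $L^{q/2}$ bound, a maximal inequality and Markov's inequality to build $E_{1,n}$, and then apply the Gaussian comparison inequality (Lemma 3.1 of \cite{chechekato2013}) conditionally on $E_{1,n}$. However, as written your argument has a quantitative flaw that makes it fail on part of the parameter range allowed by Theorem \ref{thm:2}. You bound the fluctuation of the blockwise covariance estimator by $m/\sqrt{n-m}\asymp n^{\phi-1/2}$, which does not even tend to zero when $\phi\ge 1/2$, whereas Theorem \ref{thm:2} permits any $\phi\in(0,1)$ and the rate $\phi'=\min\{\phi,(1-\phi)/2\}$ is calibrated exactly to the correct per-entry rate $\sqrt{m/n}+1/m\asymp n^{-\phi'}$ (this is Theorem 4 of \cite{zhou2013}, which the paper invokes; the gain of the factor $\sqrt{m}$ over your bound comes from the fact that each centered block square $[T_{s,j,m}-S_{n,j}/\sqrt n]^2-\E[\cdot]$ is of size $O(1/m)$, not $O(1)$, before summing the $n-m$ overlapping, $m$-dependent terms). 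Your claimed ``short calculation'' that $(rl_n)^{2/q}\,m/\sqrt{n-m}=o(\log^{-2}(rl_n))$ under $\theta_1<\phi'q^2/[2(q+1)]$ is therefore false for $\phi\ge 1/2$; with the correct rate the constraint $2(\theta_1+\theta_2)/q<\phi'$ does the job, exactly as in the paper. (Your bias accounting is also slightly off: the Bartlett-type centering and weighting produce an $O(1/m)$ bias rather than $O(m^{-\beta+1})$, though this term is harmless under the same constraint.)

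A second, smaller omission: the Gaussian comparison must be made against the law of $\big(S^N_{n,j}(t_{i,n})\big)_{j,i}$, whose covariance is $\tilde\Sigma_n=\E\big(\bS^N_n[\bS^N_n]^\top\big)$, not against an idealized Gaussian vector with the long-run covariance $\Delta=\sum_{k\in\Z}\mathrm{Cov}\big(X_{0,j_1}(t_{i_1,n}),X_{k,j_2}(t_{i_2,n})\big)$ that your $\widehat\Delta$ targets. One still has to show $\max_{a,b}|\tilde\Sigma_{n,a,b}-\Delta_{a,b}|\to 0$ fast enough (the paper bounds this difference by $Cn^{-\beta+1}$ using the covariance decay from Lemma \ref{lem:2} and the arguments of Lemma \ref{lem:1}) before the triangle inequality lets you feed $\widehat\Delta\approx\Delta\approx\tilde\Sigma_n$ into the Chernozhukov--Chetverikov--Kato bound. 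With these two repairs your proof coincides with the paper's.
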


\begin{proof}
By Theorem 4 of \cite{zhou2013}, we have, for each $t_{l,n},$ $1\le l\le l_n$ and $j$, $1\le j\le r$,
\begin{eqnarray*}
\Big \|\sum_{i=1}^{n-m}\frac{m}{n-m}\big[T_{i,j,m}(t_{l,n})-S_{n,j}(t_{l,n})/\sqrt{n}\big]^2-\tilde{\Sigma}_{(l-1)r+j,(l-1)r+j}\Big \|_{q'}\le C(\sqrt{m/n}+1/m),
\end{eqnarray*}
where $q'=q/2$, $\tilde{\Sigma}_{i,j}$ denotes the $(i,j)$-th entry of the matrix $\tilde{\Sigma}$
$$
\tilde{\Sigma}=\sum_{j=-\infty}^\infty \E[\bar\bX_i-\E \bar\bX_i][\bar\bX^\top_{i-j}-\E \bar\bX^\top_{i-j}],$$
 and $\bar\bX_i=(\bX^\top_i(t_{1,n}),\cdots,\bX^\top_i(t_{l_n,n}) )^\top$.

Let $\tilde{\Sigma}_n=\E (\bS^N_n[\bS^N_n]^\top)$, where $\bS^N_n=([\bS^N_n(t_{1,n})]^\top,\cdots, [\bS^N_n(t_{l_n,n})]^\top)^\top$. Observe that for each fixed $l$ and $j$, the $k$-th dependence measure of the sequence $\{X_{i,j}(t_{i,n})\}_{i=1}^n$ decays at the rate $O(k^{-\beta})$.
Hence by similar arguments as those given in the proof of Lemma \ref{lem:1}, we obtain
\begin{eqnarray*}
\max_{1\le i,j\le rl_n}|\tilde{\Sigma}_{n,i,j}-\tilde{\Sigma}_{i,j}|\le Cn^{-\beta+1},
\end{eqnarray*}
and we conclude that
\begin{eqnarray}\label{eq:1}
\Big \|\sum_{i=1}^{n-m}\frac{m}{n-m}[T_{i,j,m}(t_{l,n})-S_{n,j}(t_{l,n})/\sqrt{n}]^2-\tilde{\Sigma}_{n,(l-1)r+j,(l-1)r+j}\Big \|_{q'}\le C(\sqrt{m/n}+1/m).
\end{eqnarray}
From these considerations   and by arguments similar to those in  the proof of Theorem 4 of \cite{zhou2013}, we also obtain that
\begin{eqnarray*}
\Big \|\sum_{i=1}^{n-m}\frac{m}{n-m}[T_{i,j,m}(t_{l,n})-\frac{S_{n,j}(t_{l,n})}{\sqrt{n}}][T_{i,h,m}(t_{w,n})-\frac{S_{n,h}(t_{w,n})}{\sqrt{n}}]&-&\tilde{\Sigma}_{n,(l-1)r+j,(w-1)r+h}\Big \|_{q'}\cr&\le& C(\sqrt{m/n}+1/m),
\end{eqnarray*}
for any $1\le j,h\le r$ and $1\le l,w\le l_n$. Therefore,   an $L^{q'}$ maximum inequality yields
\begin{eqnarray*}
\Big\|\max_{1\le a,b\le rl_n}|[\mbox{Cov}(\bar{\bPhi}_{m}|\{\bX_i(u)\}_{i=1}^n)]_{a,b}-\tilde{\Sigma}_{n,a,b}|\Big\|_{q'}\le [rl_n]^{2/q}(\sqrt{m/n}+1/m):=\gamma_{1,n},
\end{eqnarray*}
where
 $\bar{\bPhi}_{m}=(\bPhi^\top_{m}(t_{1,n}),\cdots, \bPhi^\top_{m}(t_{l_n,n}))^\top$. By the assumption $\theta_1<\phi'q^2/[2(q+1)]$, we can choose $l_n=n^{\theta_2}$ such that
\begin{eqnarray*}
\left\{\begin{array}{l}\theta_1/q-\theta_2<0\\ 2(\theta_1+\theta_2)/q<\phi'\end{array}\right.
\end{eqnarray*}
Since $2(\theta_1+\theta_2)/q<\phi'$, we conclude that $\gamma_{1,n}$ converges to zero at a polynomial rate. Hence the event 
$$E_{1,n}:=\max_{1\le a,b\le rl_n}\big|[\mbox{Cov}(\bar{\bPhi}_{m}|\{\bX_i(u) \}_{i=1}^n)]_{a,b}-\tilde{\Sigma}_{n,a,b}\big|\le \sqrt{\gamma_{1,n}}$$
has probability at least $1-\gamma_{1,n}^{q/2}$. On event $E_{1,n}$, we have by Lemma 3.1 of \cite{chechekato2013}  that
\begin{eqnarray*}
\sup_{x\in\R}\Big|\p\Big(\max_{1\le j\le r}\sup_{1\le i\le l_n}|\Phi_{m,j}(t_{i,n})|\le x\Big|\{\bX_i(u)\}_{i=1}^n\Big)&-& \p\Big(\max_{1\le j\le r}\sup_{1\le i\le l_n}|S^N_{n,j}(t_{i,n})|\le x\Big)\Big|\cr
&\le& C\gamma_{1,n}^{1/6}\log (rl_n\gamma^{-1/2}_{1,n})^{2/3}\rightarrow 0,
\end{eqnarray*}
which completes the proof of Lemma \ref{lem:3}.
\end{proof}

\begin{lemma}\label{lem:5}
Assume $m/n\rightarrow 0$. Then under the assumptions of Theorem \ref{thm:2} , we have
\begin{eqnarray*}
\Big \|\sup_u\sum_{i=1}^{n-m}\frac{m}{n-m}[T'_{i,j,m}(u)-S'_{n,j}(u)/\sqrt{n}]^2 \Big \|_{q/2}\le C.
\end{eqnarray*}
\end{lemma}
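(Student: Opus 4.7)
The plan is to prove $\|\sup_u F_n(u)\|_{q/2}\le C$ for $F_n(u) := \sum_{i=1}^{n-m}\tfrac{m}{n-m}\bigl[T'_{i,j,m}(u) - S'_{n,j}(u)/\sqrt{n}\bigr]^2$ in two steps: a pointwise bound $\sup_u\|F_n(u)\|_{q/2}\le C$, and then a passage to the uniform bound through the pathwise identity $\sup_{u\in[0,1]}F_n(u)\le F_n(0)+\int_0^1|F_n'(s)|\,ds$, which is valid because $F_n$ is a.s.\ differentiable in $u$ under the hypotheses of Theorem~\ref{thm:2}. Combined with Minkowski's integral inequality $\bigl\|\int_0^1|F_n'(s)|\,ds\bigr\|_{q/2}\le \int_0^1 \|F_n'(s)\|_{q/2}\,ds$, this reduces everything to proving $\|F_n(0)\|_{q/2}\le C$ and $\sup_s\|F_n'(s)\|_{q/2}\le C$.

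For the pointwise bound, let $D_i(u):=T'_{i,j,m}(u)-S'_{n,j}(u)/\sqrt{n}$; note $\E D_i(u)=0$. Under (\ref{ass6}) the derivative process $\{X'_{l,j}(u)\}$ has physical-dependence decay $O(l^{-\beta})$ with $\beta>2$, so a Wu-type moment inequality (e.g.\ Theorem~1 of \cite{wu2005nonlinear}) yields $\|T'_{i,j,m}(u)-\mu'_j(u)\|_q=O(m^{-1/2})$ and $\|S'_{n,j}(u)/\sqrt{n}-\mu'_j(u)\|_q=O(n^{-1/2})$ uniformly in $u$. Hence $\|D_i(u)\|_q=O(m^{-1/2})$ and, since $q/2\ge 2$, Minkowski's inequality applied to the nonnegative sum gives
\begin{equation*}
\|F_n(u)\|_{q/2}\le \tfrac{m}{n-m}\sum_{i=1}^{n-m}\|D_i(u)\|_q^2\le C,
\end{equation*}
uniformly in $u$. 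Specializing to $u=0$ produces the required bound on $\|F_n(0)\|_{q/2}$.

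The remaining task, $\sup_s\|F_n'(s)\|_{q/2}\le C$ with $F_n'(s)=2\sum_i\tfrac{m}{n-m}D_i(s)D'_i(s)$ and $D'_i(s)=T''_{i,j,m}(s)-S''_{n,j}(s)/\sqrt{n}$, is the main obstacle. A crude Minkowski estimate gives only $\|F_n'(s)\|_{q/2}\le\tfrac{2m}{n-m}\sum_i\|D_i(s)\|_q\|D'_i(s)\|_q=O(\sqrt{m})$, because (\ref{ass6}) provides no physical-dependence decay for $X''$, so (\ref{ass0}) merely yields $\|D'_i(s)\|_q=O(1)$ rather than $O(m^{-1/2})$. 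The resolution is to extract cancellation by decomposing $D_i(s)D'_i(s)=\E[D_i(s)D'_i(s)]+Y_i(s)$: the mean is $O(1/m)$, a consequence of the summable cross-covariance $\sum_\tau|\text{Cov}(X'_0(s),X''_\tau(s))|<\infty$ (which one derives from the physical dependence of $X'$ via coupling together with the Lipschitz regularity (\ref{eq:2lip})); the mean-zero $\{Y_i(s)\}_i$ is itself a physical-dependent sequence in $i$ whose partial sum admits an $L^{q/2}$-bound of order $\sqrt{n/m}$ through a Wu-type inequality applied to this bilinear form. Assembling the two pieces,
\begin{equation*}
\|F_n'(s)\|_{q/2}\le \tfrac{2m}{n-m}\bigl[(n-m)\cdot O(1/m)+O(\sqrt{n/m})\bigr]=O(1)+O(\sqrt{m/n}),
\end{equation*}
which is $O(1)$ uniformly in $s$ since $m/n\to 0$. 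Substituting the two bounds into the chain of the first paragraph completes the argument. The technical heavy lifting sits in the control of the centered bilinear partial sum $\sum_i Y_i(s)$ and in verifying the $O(1/m)$ decay of its mean; both require careful bookkeeping with the Wu (2005) moment inequalities and the joint cross-covariance structure of $(X',X'')$ inherited via (\ref{ass6}) and (\ref{eq:2lip}).
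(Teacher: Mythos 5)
Your first step (the pointwise bound $\sup_u\|F_n(u)\|_{q/2}\le C$) is sound, but the second step has a genuine gap, and it sits exactly where you concede the ``technical heavy lifting'' lies. To control $F_n'(s)=2\sum_i\tfrac{m}{n-m}D_i(s)D_i'(s)$ you need the second-derivative process $X''_{i,j}$ \emph{as a time series}: both the claimed summability $\sum_\tau|\mbox{Cov}(X'_{0,j}(s),X''_{\tau,j}(s))|<\infty$ (behind the $O(1/m)$ mean) and any Wu-type $L^{q/2}$ bound for the centered bilinear sum $\sum_iY_i(s)$ require decay of the physical dependence measure of $X''_{i,j}$ in time, which the assumptions of Theorem \ref{thm:2} do not provide: condition (\ref{ass6}) concerns only $H_j$ and $H_j'$, i.e. $X$ and $X'$, while (\ref{eq:2lip}) is regularity in $u$, not in time. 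The best transfer from (\ref{ass6}) to $X''$ is a Landau--Kolmogorov-type interpolation applied to the coupled difference $X'_{k,j}-{X'}^{*}_{k,j}$, which gives a decay of order roughly $k^{-\beta/2}$; the cross-covariances are then only $O(\tau^{1-\beta/2})$, summable only when $\beta>4$, whereas the theorem assumes merely $\beta>2$. Moreover the bound $\|\sum_iY_i(s)\|_{q/2}=O(\sqrt{n/m})$ is not justified: the blocks overlap over a range of order $m$, so even with good dependence decay one expects at best order $\sqrt{n}$, and with no decay available for $X''$ nothing better than the trivial $O(n/\sqrt{m})$ holds, in which case $\|F_n'(s)\|_{q/2}$ is of order $\sqrt{m}$ and your argument fails for block sizes $m\asymp n^{\phi}$ with $\phi>1/2$, which the theorem permits.

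The difficulty is self-inflicted, because the paper never differentiates in $u$. Since each summand of $F_n(u)$ is nonnegative, $\sup_uF_n(u)\le\tfrac{m}{n-m}\sum_i\sup_uD_i^2(u)$, and the supremum of each block mean is controlled directly through the cosine expansion of $X'_{i,j}$: writing $X'_{i,j}(u)=\sum_kb_{i,j,k}\cos(k\pi u)$, the Lipschitz condition (\ref{eq:2lip}) yields $\|b_{i,j,k}-b^*_{i,j,k}\|_q\le C\min(k^{-2},i^{-\beta})$ (the argument of Proposition \ref{prop:1}, with the oscillatory integral of the Lipschitz function $X''$ replacing the second integration by parts), and then, exactly as in Proposition \ref{prop:2}, $\|\sup_u|S'_{n,j}(u)|\|_q\le C$ and $\|\sup_u|T'_{i,j,m}(u)|\|_q\le C/\sqrt{m}$. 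Hence $\|\sup_uD_i^2(u)\|_{q/2}\le C/m$, and the triangle inequality over the $n-m$ terms gives the lemma with no covariances of $(X',X'')$, no bilinear sums, and no restriction on $\phi$. To rescue your route you would need extra assumptions (temporal dependence decay for $X''$, or $\beta>4$ together with a sharper treatment of the overlapping blocks), which the paper's argument deliberately avoids.
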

\begin{proof}
Without loss of generality, we assume $\E(X'_{i,j}(u))=0$. First we write $X'_{i,j}(u):=\sum_{j=0}^\infty b_{i,j,k}\cos(k\pi u)$. Then, similar to the proof of Proposition \ref{prop:1}   utilizing $\|X_{1,j}''(u)-X''_{1,j}(v)\|_q\le C|u-v|$, it follows
\begin{eqnarray*}
\|b_{i,j,k}-b^*_{i,j,k}\|_q\le C\min(1/k^2,i^{-\beta}).
\end{eqnarray*}
Let $B_{j,k}=\sum_{i=1}^nb_{i,j,k}/\sqrt{n}$. Then by the proof of Proposition \ref{prop:2}, we have $\|B_{j,k}\|_q\le Ck^{2(1-\beta)/\beta}$ which implies
\begin{eqnarray*}
\|\sup_{0\le u\le 1}|S'_{n,j}(u)|^2\|_{q/2}\le \big(\sum_{j=0}^\infty \|B_{j,k}\|_q\big)^2\le C.
\end{eqnarray*}
Similarly, we have for any $i$,
\begin{eqnarray*}
m\|\sup_{0\le u\le 1}|T'_{i,j,m}(u)|^2\|_{q/2}\le C.
\end{eqnarray*}
As a result the lemma follows.
\end{proof}


\begin{lemma}\label{lem:4}
Assume $m/n\rightarrow 0$. Then under the assumptions of Theorem \ref{thm:2} and the choice of $\theta_2$ in Lemma \ref{lem:3}, there exists a sequence of events $E_{2,n}$ such that $\p(E_{2,n})\rightarrow 1$, and   on $E_{2,n}$  we have the conditional Orlicz  norm
$$\Big\|\max_j\max_{1\le i\le l_n-1}\sup_{u\in[t_{i,n},t_{i+1,n}]}|\Phi_{m,j}(u)-\Phi_{m,j}(t_{i,n})|\Big\|_{\Psi_2}\rightarrow 0.$$
\end{lemma}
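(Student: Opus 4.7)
The plan is to reduce the claim to a supremum bound on the conditional derivative process. Since, given the data, $\Phi_{m,j}(\cdot)$ is a finite linear combination of the smooth functions $T_{s,j,m}(u)-S_{n,j}(u)/\sqrt n$, it is differentiable in $u$ with derivative
\[
\Phi'_{m,j}(u) = \sqrt{\tfrac{m}{n-m}}\sum_{s=1}^{n-m}\bigl[T'_{s,j,m}(u)-S'_{n,j}(u)/\sqrt n\bigr]N_s,
\]
and the mean value theorem yields
\[
\max_{1\le i\le l_n-1}\sup_{u\in[t_{i,n},t_{i+1,n}]}|\Phi_{m,j}(u)-\Phi_{m,j}(t_{i,n})|\le \frac{1}{l_n}\sup_{u\in[0,1]}|\Phi'_{m,j}(u)|.
\]
Hence it suffices to produce an event $E_{2,n}$ with $\p(E_{2,n})\to 1$ on which the conditional $\Psi_2$ norm of $\max_{1\le j\le r}\sup_u|\Phi'_{m,j}(u)|$ is $o(l_n)$.

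Conditional on $\{\bX_i\}$, $\Phi'_{m,j}$ is a centered Gaussian process on $[0,1]$, so its supremum can be controlled via Dudley's entropy integral and Borell's inequality, given uniform bounds on its pointwise variance and intrinsic-metric increments. For the pointwise variance $V_{m,j}(u)=\frac{m}{n-m}\sum_s[T'_{s,j,m}(u)-S'_{n,j}(u)/\sqrt n]^2$, Lemma~\ref{lem:5} gives $\|\sup_u V_{m,j}(u)\|_{q/2}\le C$ uniformly in $j$, and an $L^{q/2}$ maximal inequality plus Markov's inequality produce $\max_j\sup_u V_{m,j}(u)\le Cr^{2/q}\gamma_n$ on a high-probability event, for a slowly diverging $\gamma_n$. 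For the intrinsic pseudo-metric $d_j(u,v)^2:=\mathrm{Var}(\Phi'_{m,j}(u)-\Phi'_{m,j}(v)\mid\{\bX_i\})$, assumption~(\ref{ass0}) gives $|X'_{a,j}(u)-X'_{a,j}(v)|\le|u-v|M_{a,j}$ with $M_{a,j}:=\sup_w|X''_{a,j}(w)|$ of uniformly bounded $q$-th moment; Cauchy--Schwarz and careful bookkeeping of the overlapping local averages $T'_{s,j,m}$ then yield the Lipschitz bound $\max_j d_j(u,v)\le C\sqrt m\,r^{1/q}\sqrt{\gamma_n}\,|u-v|$ on (an enlargement of) the same event $E_{2,n}$.

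With variance cap $\sigma^2:=Cr^{2/q}\gamma_n$ and Lipschitz constant $L:=C\sqrt m\,r^{1/q}\sqrt{\gamma_n}$ satisfying $L/\sigma=O(\sqrt m)$, Dudley's entropy integral delivers
\[
\E\Bigl[\sup_u|\Phi'_{m,j}(u)|\,\Big|\,\{\bX_i\}\Bigr] \le C\sigma\sqrt{\log(L/\sigma)}\le Cr^{1/q}\sqrt{\gamma_n\log m}
\]
uniformly in $j$ on $E_{2,n}$, the key point being that the bounded variance cap converts the crude $\sqrt m$ Lipschitz constant into only a $\sqrt{\log m}$ penalty. Borell's concentration inequality upgrades this to a conditional $\Psi_2$ bound of the same order, and the sub-Gaussian maximum inequality $\|\max_{j\le r}Y_j\|_{\Psi_2}\le C\sqrt{\log r}\max_j\|Y_j\|_{\Psi_2}$ then gives $\|\max_j\sup_u|\Phi'_{m,j}|\|_{\Psi_2,\mathrm{cond}}\le Cr^{1/q}\sqrt{\gamma_n\log(nr)}$ on $E_{2,n}$. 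Dividing by $l_n=n^{\theta_2}$ with $\theta_2>\theta_1/q$ (the choice secured in the proof of Lemma~\ref{lem:3}) produces a bound of order $n^{\theta_1/q-\theta_2}\sqrt{\gamma_n\log(nr)}\to 0$ polynomially.

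The main technical obstacle is the uniform Lipschitz control of the intrinsic metric $d_j$ across the $r$ coordinates. The pointwise Lipschitz bound goes through $M_{a,j}$, and converting $\|M_{a,j}\|_q\le C$ into a uniform-in-$j$ bound on the empirical averages $\bar M_j^2=(1/n)\sum_a M_{a,j}^2$ costs an $r^{2/q}$ factor by Markov. One must then verify that combining this coarse Lipschitz estimate with the variance cap inside Dudley's entropy integral leaves only a harmless $\sqrt{\log m}$ factor, and that the resulting rate $r^{1/q}\sqrt{\log(nr)}/l_n$ is made $o(1)$ by the polynomial gap $\theta_2-\theta_1/q>0$; arranging both the variance and Lipschitz controls on a single event $E_{2,n}$ of probability tending to one is the essential bookkeeping step.
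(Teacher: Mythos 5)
Your reduction to the derivative process and your variance-cap event are essentially the paper's (the cap $\max_j\sup_u V_{m,j}(u)\le Cr^{2/q}\gamma_n$ via Lemma \ref{lem:5}, an $L^{q/2}$ maximal inequality and Markov is exactly how $E_{2,n}$ is built there), but the step where you control the intrinsic metric of $\Phi'_{m,j}$ has a genuine gap. You bound $d_j(u,v)$ through $M_{a,j}:=\sup_w|X''_{a,j}(w)|$ and assert that assumption (\ref{ass0}) gives $\|M_{a,j}\|_q\le C$. It does not: (\ref{ass0}) only states $\sup_{u}\|X''_1(u)\|_q<\infty$, i.e.\ the supremum is outside the moment, and neither it nor (\ref{eq:2lip}) (which is Lipschitz continuity of $X''$ in $L^q$, not pathwise) yields a moment bound on $\sup_w|X''_{a,j}(w)|$. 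Without that, your Lipschitz estimate $\max_j d_j(u,v)\le C\sqrt m\,r^{1/q}\sqrt{\gamma_n}\,|u-v|$ on a high-probability event is unsupported, and the Dudley/Borell entropy step — the core of your bound on $\|\sup_u|\Phi'_{m,j}(u)|\|_{\Psi_2}$ — collapses. The need for this extra second-derivative control is created by your choice to pass through $\sup_{u\in[0,1]}|\Phi'_{m,j}(u)|$, which is a genuine Gaussian-process supremum and forces chaining.

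The paper avoids this entirely, and you can repair your argument by following the same device: instead of the mean value theorem, write $\sup_{u\in[t_{i,n},t_{i+1,n}]}|\Phi_{m,j}(u)-\Phi_{m,j}(t_{i,n})|\le\int_{t_{i,n}}^{t_{i+1,n}}|\Phi'_{m,j}(s)|\,ds$ and take the conditional $\Psi_2$ norm inside the integral (Minkowski). Then only the pointwise norms $\|\Phi'_{m,j}(s)\|_{\Psi_2}\le C\sqrt{\gamma_{3,n}}$ are needed, and these follow on your event $E_{2,n}$ because, conditionally on the data, $\Phi'_{m,j}(s)$ is a centered Gaussian variable whose standard deviation is $\sqrt{V_{m,j}(s)}$ — no increment or entropy control of the process is required. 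A single Orlicz maximal inequality over the finitely many $(i,j)$, $1\le i\le l_n-1$, $1\le j\le r$, then gives the bound $C\sqrt{\gamma_{3,n}}\sqrt{\log(rl_n)}/l_n$, which tends to zero polynomially since $\theta_1/q-\theta_2<0$ (the choice from Lemma \ref{lem:3}); this is exactly the paper's proof, and it uses nothing beyond Lemma \ref{lem:5}.
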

\begin{proof}
Note that, for any $u\in[t_{i,n},t_{i+1,n}]$
\begin{eqnarray}\label{eq:4-1}
|\Phi_{m,j}(u)-\Phi_{m,j}(t_{i,n})|=|\int_{t_{i,n}}^u\Phi'_{m,j}(s)\,ds|\le \int_{t_{i,n}}^{t_{i+1,n}}|\Phi'_{m,j}(s)|\,ds.
\end{eqnarray}
For each $j$ and $s$, we have for the conditional variance
\begin{eqnarray*}
\mbox{Var}(\Phi'_{m,j}(s)|\{\bX_h(u)\}_{h=1}^n)=\sum_{i=1}^{n-m}\frac{m}{n-m}[T'_{i,j,m}(s)-S'_{n,j}(s)/\sqrt{n}]^2,
\end{eqnarray*}
and Lemma \ref{lem:5} implies (using a maximal inequality) that
$$\Big \|\max_j\sup_u\sum_{i=1}^{n-m}\frac{m}{n-m}[T'_{i,j,m}(u)-S'_{n,j}(u)/\sqrt{n}]^2 \Big \|_{q/2}\le Cr^{2/q}.$$
Let $\gamma_{3,n}=r^{2/q+\eta}$ for an arbitrarily small positive constant $\eta$, and define the event
\begin{align*} 
E_{2,n}=\max_j\sup_u\sum_{i=1}^{n-m}\frac{m}{n-m}[T'_{i,j,m}(u)-S'_{n,j}(u)/\sqrt{n}]^2\le \gamma_{3,n}.
\end{align*}
By Markov's inequality $\p(E_{2,n})\ge 1-r^{-\eta q/2}$. On the event $E_{2,n}$, we have that, conditional on $\{\bX_h(u)\}_{h=1}^n$, 
$\mbox{Std}(\Phi'_{m,j}(s))\le \sqrt{\gamma_{3,n}}$ uniformly  in $j$ and $s$, where $\mbox{Std}$ stands for ``standard deviation''. Note that the Orcliz norm is proportional to the standard deviation for centered Gaussian random variables. Therefore, on the event $E_{2,n}$ and conditional on $\{\bX_h(u)\}_{h=1}^n$, we have
$\|\Phi'_{m,j}(s)\|_{\Psi_2}\le \sqrt{\gamma_{3,n}}$ uniformly in $j$ and $s$.
Hence it follows by a maximal inequality that, on the event $E_{2,n}$ and conditional on $\{\bX_h(u)\}_{h=1}^n$, the Orlicz  norm
\begin{eqnarray*}
&&\Big \|\max_j\max_{1\le i\le l_n-1}\sup_{u\in[t_{i,n},t_{i+1,n}]}|\Phi_{m,j}(u)-\Phi_{m,j}(t_{i,n})| \Big \|_{\Psi_2}\cr
&\le& \sqrt{\log (rl_n)}\max_{i,j}\Big \|\sup_{u\in[t_{i,n},t_{i+1,n}]}|\Phi_{m,j}(u)-\Phi_{m,j}(t_{i,n})| \Big \|_{\Psi_2}\cr
&\le& \sqrt{\log (rl_n)}\max_j\sup_s \|\Phi'_{m,j}(s)\|_{\Psi_2}/l_n\le \sqrt{\gamma_{3,n}}\sqrt{\log (rl_n)}/l_n.
\end{eqnarray*}
The right-hand side converges to zero polynomially fast for sufficiently small $\eta$ since $\theta_1/q-\theta_2<0$.

\end{proof}

\begin{proof}[\bf Proof of Theorem \ref{thm:2}]
Note that, for each fixed $j$,
\begin{eqnarray*}
\Big\|\sup_{0\le s\le 1}|[S^N_{n,j}]'(k,s)|\Big\|_{\Psi_2}\le \sum_{l=0}^k l\|N_{n,l,j} \|_{\Psi_2}\le  C\sum_{l=0}^k l\|N_{n,l,j} \|_2\le C\sum_{l=0}^k l\times l^{2(1-\beta)/\beta}\le Ck^{2/\beta},
\end{eqnarray*}
where we have used the fact that the Orcliz norm is proportional to the standard deviation for centered Gaussian random variables. Hence we obtain that
\begin{eqnarray}\label{eq:thm2-1}
\Big\|\max_j\sup_{0\le s\le 1}|[S^N_{n,j}]'(k,s)|\Big\|_{\Psi_2}\le  Ck^{2/\beta}\sqrt{\log r}.
\end{eqnarray}
On the other hand, we have
\begin{eqnarray*}
\Big\|\sup_u|S^N_{n,j}(u)-S^N_{n,j}(k,u)|\Big\|_{\Psi_2}\le C\sum_{l=k+1}^\infty\|N_{n,l,j}\|_{\Psi_2}\le  Ck^{2/\beta-1},
\end{eqnarray*}
which implies
\begin{eqnarray}\label{eq:thm2-2}
\Big\|\max_j\sup_u|S^N_{n,j}(u)-S^N_{n,j}(k,u)|\Big\|_{\Psi_2}\le  Ck^{2/\beta-1}\sqrt{\log r}.
\end{eqnarray}
Combining (\ref{eq:thm2-1}) and (\ref{eq:thm2-2}) and by the triangular inequality, we obtain that
\begin{eqnarray}\label{eq:thm2-3}
\Big\|\max_j\max_{1\le i\le l_n}\sup_{t_i\le u\le t_{i+1}}|S^N_{n,j}(u)-S^N_{n,j}(t_{i,n})|\Big\|_{\Psi_2}\le  C[k^{2/\beta}/l_n+k^{2/\beta-1}]\sqrt{\log r}.
\end{eqnarray}
Choose $k$ diverging to infinity with a slowly enough polynomial rate  such that the right hand side of (\ref{eq:thm2-3}) converges to 0 with a polynomial rate $n^{-\eta_1}$ for some  $\eta_1>0$. We have that, for any $x\in{\mathbb R}$,
\begin{eqnarray*}
\p\Big[\max_{1\le j\le r}\sup_{0\le u\le 1}|S^N_{n,j}(u)|\le x\Big]&-&\p\Big[\max_{1\le j\le r}\max_{1\le i\le l_n}|S^N_{n,j}(t_{i,n})|\le x-\tau_{1,n}\Big]\cr
&\ge& -\p[\max_j\max_{1\le i\le l_n}\sup_{t_{i,n}\le u\le t_{i+1,n}}|S^N_{n,j}(u)-S^N_{n,j}(t_{i,n})|\ge \tau_{1,n}]\cr
&\ge& -n^{-C_0}
\end{eqnarray*}
for some constant $C_0>0$, where $\tau_{1,n}=n^{-\eta_1}\sqrt{\log n}$ and we have  used Markov's inequality in the second inequality above.
Similarly, by Lemma \ref{lem:4}, we have that, on the event $E_{2,n}$,
\begin{eqnarray*}
\p\Big[\max_{1\le j\le r}\sup_{0\le u\le 1}|\Phi_{m,j}(u)|&\le& x ~\Big|\{\bX_i\}_{i=1}^n\Big]-\p\Big[\max_{1\le j\le r}\max_{1\le i\le l_n}|\Phi_{m,j}(t_{i,n})|\le x+\tau_{2,n} ~\Big|\{\bX_i\}_{i=1}^n\Big]\cr
&\le& \p[\max_j\max_{1\le i\le l_n}\sup_{t_{i,n}\le u\le t_{i+1,n}}|\Phi_{m,j}(u)-\Phi_{m,j}(t_{i,n})|\ge \tau_{2,n}|\{\bX_i\}_{i=1}^n]\cr
&\le& n^{-C_1}
\end{eqnarray*}
for some constant $C_1>0$, where $\tau_{2,n}=n^{-\eta_2}\sqrt{\log n}$ for some constant $\eta_2>0$.
Therefore, by Lemma \ref{lem:3} and on the event $E_{1,n}\cap E_{2,n}$, we have 
\begin{eqnarray*}
P_{n}&:=&\p\Big[\max_{1\le j\le r}\sup_{0\le u\le 1}|\Phi_{m,j}(u)|\le x ~\Big|\{\bX_i\}_{i=1}^n\Big]-\p\Big[\max_{1\le j\le r}\sup_{0\le u\le 1}|S^N_{n,j}(u)|\le x\Big]\cr
&\le &\p\Big[\max_{1\le j\le r}\max_{1\le i\le l_n}|\Phi_{m,j}(t_{i,n})|\le x+\tau_{2,n} ~\Big|\{\bX_i\}_{i=1}^n\Big]-\p\Big[\max_{1\le j\le r}\max_{1\le i\le l_n}|S^N_{n,j}(t_{i,n})|\le x-\tau_{1,n}\Big]+O(n^{-\tilde C}),\cr
&\le & \eta_{3,n}+\p\Big[x-\tau_{1,n}<\max_{1\le j\le r}\max_{1\le i\le l_n}|S^N_{n,j}(t_{i,n})|\le x+\tau_{2,n}\Big]+O(n^{-\tilde C}),
\end{eqnarray*}
where $\tilde C=\min(C_0, C_1)$ and $\eta_{3,n}$ is the approximation error in Lemma \ref{lem:3}. By Nazarov's Inequality (\cite{nazarov2003}),
\begin{eqnarray*}
\p\Big[x-\tau_{1,n}<\max_{1\le j\le r}\max_{1\le i\le l_n}|S^N_{n,j}(t_{i,n})|\le x+\tau_{2,n}\Big]\le C(\tau_{2,n}+\tau_{1,n})\sqrt{\log(rl_n)}
\end{eqnarray*}
which converges to 0 at a polynomial rate. Hence, it follows
\begin{eqnarray*}
P_{n}&\le &\eta_{3,n}+O((\tau_{2,n}+\tau_{1,n})\sqrt{\log(rl_n)})+O(n^{-\tilde C})
\end{eqnarray*}
uniformly with respect to $x\in{\mathbb R}$. Note that the right hand side of the above inequality converges to 0 at a polynomial rate. By a similar argument, we obtain on the event $E_{1,n}\cap E_{2,n}$ that
\begin{eqnarray*}
P_{n}& \ge & -\big[\eta_{3,n}+O((\tau_{2,n}+\tau_{1,n})\sqrt{\log(rl_n)})+O(n^{-\tilde C})\big]
\end{eqnarray*}
uniformly with respect to  $x\in{\mathbb R}$, which implies 

\begin{eqnarray*}
\sup_{x\in \R}\Big|\p\Big[\max_{1\le j\le r}\sup_{0\le u\le 1}|\Phi_{m,j}(u)|\le x ~\Big|\{\bX_i\}_{i=1}^n\Big]-\p\Big[\max_{1\le j\le r}\sup_{0\le u\le 1}|S^N_{n,j}(u)|\le x\Big]\Big|\rightarrow 0.
\end{eqnarray*}
\end{proof}

\subsection*{Proofs of the results in Section \ref{sec4}}

\begin{proof}[\bf Proof of Proposition \ref{prop:scb}.] Observe that $\check X_{i,j}(u)$ is a centered process. Hence this proposition follows from Theorem \ref{thm:1} if we can show that $v_j(u)$ is twice continuously differentiable on $[0,1]$ for all $j$. Observe that $g_j(u)=\E(X_{i,j}(u))$. By the assumption that $X_{i,j}(u)$ is twice continuously differentiable,   (\ref{ass0})  and a version of the Dominated Convergence Theorem (c.f. Theorem 1.6.8 of \cite{durrett2010}), we have that $g_j(u)$ is twice continuously differentiable. As a result $X_{i,j}(u)-g_j(u)$ is  twice continuously differentiable a.s..  Since $v^2_j(u)=\E[X_{i,j}(u)-g_j(u)]^2$,  (\ref{ass0})  and the above mentioned version of the Dominated Convergence Theorem imply that  $v_j(u)$ is twice continuously differentiable for all $j$.
\end{proof}

\begin{proof}[\bf Proof of Lemma \ref{lem:hatv}.]
Recall the notation (\ref{h7}) and observe that
\begin{equation}\label{h8}
 \hat{v}^2_j(u)-v^2_j(u)=\frac{1}{n}\sum_{i=1}^n[\check{X}^2_{i,j}(u)-\E\check{X}^2_{i,j}(u)]-\frac{1}{n}\check{S}^2_{n,j}(u).
\end{equation}
Let $Z_{i,j}(u)=\check{X}^2_{i,j}(u)-\E\check{X}^2_{i,j}(u)$. Then $\{Z_{i,j}(u)\}_{i=1}^n$ is a centered functional time series with dependence measures
\begin{eqnarray*}
\delta_{Z_{i,j}}(k,q/2)&=&\sup_{u\in[0,1]}\|Z_{k,j}(u)-Z^*_{k,j}(u)\|_{q/2}\le\sup_{u\in[0,1]}\|\check{X}_{k,j}(u)+\check{X}^*_{k,j}(u)\|_{q}\|X_{k,j}(u)-X^*_{k,j}(u)\|_{q}\cr
&\le& C\delta_{G_j}(k,q)\le Ck^{-\beta}.
\end{eqnarray*}
Therefore, by the arguments in the proof of Lemma \ref{lem:5}, we obtain that
\begin{eqnarray*}
\Big \|\sup_{u\in[0,1]}\frac{1}{n}\Big |\sum_{i=1}^n[\check{X}^2_{i,j}(u)-\E\check{X}^2_{i,j}(u)]\Big | \Big \|_{q/2}\le C/\sqrt{n}
\end{eqnarray*}
and an $L^{q/2}$ maximal inequality yields
\begin{eqnarray*}
\Big \|\max_{1\le j\le r}\sup_{u\in[0,1]}\frac{1}{n}\Big |\sum_{i=1}^n[\check{X}^2_{i,j}(u)-\E\check{X}^2_{i,j}(u)]\Big | \Big \|_{q/2}\le Cr^{2/q}/\sqrt{n}.
\end{eqnarray*}
Similarly, we also obtain that
\begin{eqnarray*}
\Big \|\max_{1\le j\le r}\sup_{u\in[0,1]}\frac{1}{n}\check{S}^2_{n,j}(u)\Big \|_{q/2}\le  Cr^{2/q}/n.
\end{eqnarray*}
The Lemma follows by these   two inequalities and (\ref{h8}).
\end{proof}

\begin{proof}[\bf  Proof of Proposition \ref{prop:tp}]
Recalling the Fourier expansion $\bX_{i}(u)=\sum_{k=0}^\infty \ba_{i,k}\cos(k\pi u)$. It follows that
 $W_{i,j}(u)$ admits  the following cosine representation
\begin{eqnarray}
W_{i,j}(u)=\sum_{k=1}^\infty a_{i,j,k}\cos(k\pi u),
\end{eqnarray}
where $a_{i,j,k}$ is the $j$-th component of $\ba_{i,k}$, $j=1,2,\cdots, r$.
As a result, $W_{i,j,k}(u)$ can be represented in the form
\begin{eqnarray}
W_{i,j,k}(u)=\sum_{c=1}^\infty f_{i,j,k,c}\cos(c\pi u),
\end{eqnarray}
where $ f_{i,j,k,c}=a_{i,j,c}-a_{i,k,c}$. From the proof of Proposition \ref{prop:1}, it follows
\begin{eqnarray}
\max_{j,k}\|f_{i,j,k,c}-f^*_{i,j,k,c}\|_q\le C\min(1/c^2, i^{-\beta}),
\end{eqnarray}
for all $c\ge 1$, where $c > 0$ is a constant.
Furthermore,   the dependence measures of $\{W_{i,j,k}(u)\}_{i=1}^n$ satisfy  
\begin{eqnarray}
\delta_{W_{j,k}}(h, q)\le \delta_{W_j}(h,q)+\delta_{W_k}(h,q)\le 4\delta_H(h,q)\le Ch^{-\beta}
\end{eqnarray}
 uniformly in $j,k$,
where the second inequality is derived from the fact that
\begin{eqnarray*}
\delta_{W_j}(h,q)&=& \sup_u \Big \|X_h(u)-\int_{0}^1X_h(u)\,du- [X^*_h(u)-\int_{0}^1X^*_h(u)\,du]\Big \|_q\cr
&\le&\sup_u\|X_h(u)-X^*_h(u)\|_q+\int_{0}^1\|X_h(u)-X^*_h(u)\|_q\,du\cr
&\le& 2\sup_u\|X_h(u)-X^*_h(u)\|_q=2\delta_H(h,q).
\end{eqnarray*}
Recall that $\mathring\bW_i(u)$ is the $r(r-1)/2$-dimensional vector with entries $W_{i,j,k} (u)$. The proof of Proposition \ref{prop:tp} now follows from
 applying the same arguments as those given in the proof  of Theorem \ref{thm:1} and Proposition \ref{prop:scb} to  $\mathring\bW_i(u)$.
\end{proof}

\begin{proof}[\bf  Proof of Proposition \ref{power_tp}]
Recall the definition of $\mathring S_{n,j,k}$ in (\ref{h9}) and define
\begin{eqnarray*}
T^*=\max_{1\le j<k\le r}\sup_{0\le u\le 1}|\mathring S_{n,j,k}(u)-\E\mathring S_{n,j,k}(u)|/v_{j,k}(u).
\end{eqnarray*}
Then following the proof of Propositions \ref{prop:tp} and \ref{boots_tp} it follows
\begin{eqnarray}\label{eq:tp1}
\p(T^*> \mathring c_{1-\alpha})\rightarrow 1-\alpha.
\end{eqnarray}
Meanwhile, from the proofs of Propositions \ref{prop:tp} and \ref{boots_tp} we observe that $T^*$ is asymptotically equivalent to the $L^\infty$ norm of a $r(r-1)/2$ dimensional Gaussian random function and the Orcliz norm of each Gaussian random function is $O(1)$. Hence it is straightforward to conclude by a maximal inequality that the Orcliz norm of the latter $L^\infty$ norm is of order $O(\sqrt{\log n})$ in view of $r\asymp n^{\theta_1}$. Therefore $\mathring c_{1-\alpha}=O_\p(\sqrt{\log n})$. On the other hand, it is straightforward to conclude that, under ${\bf H}_a$,
\begin{eqnarray*}
\max_{1\le j<k\le r}\sup_{u\in[0,1]}|\E\mathring S_{n,j,k}(u)|/v_{j,k}(u)|/\sqrt{\log n}\rightarrow\infty.
\end{eqnarray*}
Together with equation (\ref{eq:tp1}), we conclude that $\p (T/\sqrt{\log n}>d_n)\rightarrow 1$ for any divergent sequence $d_n$. Hence the result follows in view of the fact that $\mathring c_{1-\alpha}=O_\p(\sqrt{\log n})$.
\end{proof}

\newpage

{\footnotesize

}

\end{document}